\def\dx{\,\mathrm{d}\bx} 
\def\dS{\,\mathrm{d}S}
\def\dV{\,\mathrm{d}V}
\def\Deltax{\Delta_{\bx}}
\def\nablax{\nabla_{\bx}}
\def\QT{Q_T}
\newcommand\QTrefftz[2]{\mathbb{Q}\mathbb{T}^{#1}\left(#2\right)}
\def\calS{\mathcal{S}}
\newcommand\REL[1]{\mathfrak{Re}\left({#1}\right)}
\newcommand{\dfdt}[1]{\partial_t{#1}}
\newcommand{\dfhdt}[1]{\frac{d {#1}}{d t}}
\newcommand{\EFC}[2]{\calC^{#1}\left({#2}\right)}
\newcommand{\ESOBOLEV}[2]{H^{#1}\left({#2}\right)}
\newcommand{\PartDer}[2]{\partial^{#1}_{#2}}
\newcommand{\DerA}[2]{D^{#1}{#2}}
\newcommand{\Taylor}[3]{T_{#1}^{#2}\left[#3\right]}
\newcommand{\AvTaylor}[2]{\calQ^{#1}\left[#2\right]}
\newcommand{\Conjugate}[1]{\overline{#1}}
\newcommand{\jump}[1]{\llbracket#1\rrbracket}
\newcommand{\ORDER}[1]{{\mathcal O}\left(#1\right)}
\numberwithin{equation}{section}
\newcommand{\vnSpace}[1]{{{\vn}_{#1}^{\bx}}}
\newcommand{\vnTime}[1]{{n_{#1}^t}}
\newlength{\dhatheight}
\newcommand{\normalDer}{\partial_{\mathbf{n_x}}}
\newcommand{\nF}{\vec{\bn}_F}
\newcommand{\nFt}{n_F^t}
\newcommand{\nFx}{\vec{\bn}_F^{\bx}}
\newcommand{\mvec}[1]{{\vec{\mathbf{#1}}}}
\newcommand{\vn}{{\mvec n}}
\newcommand{\Uu}[1]{{\mathbf{#1}}}
\newcommand{\bj}{{\boldsymbol j}}
\newcommand{\bjx}{{\boldsymbol j_\bx}}
\newcommand{\bjxo}{{\boldsymbol{j}_{\bx_1}}}
\newcommand{\bjxl}{{\boldsymbol{j}_{\bx_\ell}}}
\newcommand{\bV}{{\Uu V}}
\newcommand{\calC}{{\mathcal C}}
\newcommand{\calT}{{\mathcal T}}
\newcommand{\calE}{{\mathcal E}}
\newcommand{\calF}{{\mathcal F}}
\newcommand{\calQ}{{\mathcal Q}}
\newcommand{\Fh}{\calF_h}
\newcommand{\Kx}{K_{\bx}}
\newcommand{\Kt}{K_t}
\newcommand{\hK}{h_{K}}
\newcommand{\hKx}{h_{K_\bx}}
\newcommand{\hKt}{h_{K_t}}
\newcommand{\Th}{{\calT_h}}
\newcommand{\deK}{{\partial K}}
\newcommand{\GD}{{\Gamma_{\mathrm D}}}
\newcommand{\GN}{{\Gamma_{\mathrm N}}}
\newcommand{\GR}{{\Gamma_{\mathrm R}}}
\newcommand{\uhp}{{\psi_{hp}}}
\newcommand{\uhpT}{{\psi_{hp}^{-}}}
\newcommand{\shp}{{v_{hp}}}
\newcommand{\cs}{{\conj{\shp}}}
\newcommand{\cv}{{\conj{v}}}
\newcommand{\IN}{\mathbb{N}}
\newcommand{\IR}{\mathbb{R}}
\newcommand{\po}{\partial \Omega}
\newcommand{\gD}{{g_{\mathrm D}}}
\newcommand{\gN}{{g_{\mathrm N}}}
\newcommand{\gR}{{g_{\mathrm R}}}
\newcommand{\oon}{\;\text{on}\;}
\newcommand{\bx}{{\Uu x}}
\newcommand{\by}{{\Uu y}}
\newcommand{\bn}{{\Uu n}}
\newcommand{\bN}{{\Uu N}}
\newcommand{\bz}{{\Uu z}}
\newcommand{\btau}{{\boldsymbol\tau}}		
\newcommand{\IC}{\mathbb{C}}
\newcommand{\IP}{\mathbb{P}}
\newcommand{\IT}{\mathbb{T}}
\newcommand{\IV}{\mathbb{V}}
\newcommand*{\jmp}[1]{[\![#1]\!]}                     
 \newcommand{\mvl}[1]{\left\{\!\!\left\{#1\right\}\!\!\right\}}  
 \newcommand{\FT}{{\Fh^T}}
\newcommand{\FO}{{\Fh^0}}
\newcommand{\FD}{{\Fh^{\mathrm D}}}
\newcommand{\FN}{{\Fh^{\mathrm N}}}
\newcommand{\FR}{{\Fh^{\mathrm R}}}
\newcommand{\rtime}{{\mathrm{time}}}
\newcommand{\rspace}{{\mathrm{space}}}
\newcommand{\Fspa}{{\Fh^\rspace}}
\newcommand{\Ftime}{{\Fh^\rtime}}
\newcommand{\bVp}[1]{{\IT_{#1}}} 
\newcommand{\bVhp}{{\IV_{hp}}} 
\newcommand*{\conj}[1]{\overline{#1}}
\DeclareMathOperator{\im}{\mathfrak{Im}} 
\DeclareMathOperator{\diam}{diam} 
\DeclareMathOperator{\esssup}{ess\,sup}
\DeclareMathOperator{\essinf}{ess\,inf}
\newcommand{\cbA}[2]{{{\mathcal{A}}}\left({#1};\ {#2}\right)}
\newcommand*{\abs}[1]{\left|#1\right|}
\newcommand{\Tnorm}[2]{|||#1|||_{#2}}
\newcommand*{\Norm}[2]{\left\|#1\right\|_{#2}}
\newcommand{\DG}{_{\mathrm{DG}}}
\newcommand{\DGp}{_{\mathrm{DG^+}}}
\newcommand{\deO}{{\partial\Omega}}
\newcommand{\bv}{{\Uu v}}
\newcommand{\tr}{\mathrm{tr}}
\theoremstyle{thmstyleone}%
\newtheorem{theorem}{Theorem}
\theoremstyle{thmstyletwo}%
\newtheorem{remark}{Remark}%
\newtheorem{lemma}{Lemma}%
\newtheorem{proposition}{Proposition}%
\theoremstyle{thmstylethree}%
\title{A space--time DG method for the Schr\"odinger equation with variable potential\thanks{The authors acknowledge support from GNCS-INDAM, from PRIN projects ``NA-FROM-PDEs'' and ``ASTICE", and from PNRR-M4C2-I1.4-NC-HPC-Spoke6.}}
\author{\large{Sergio G\'omez\thanks{Department of Mathematics and Applications, University of Milano-Bicocca, Via Cozzi 55, 20125, Milan, Italy (\href{mailto:sergio.gomezmacias@unimib.it}{sergio.gomezmacias@unimib.it})}\; and Andrea Moiola\thanks{Department of Mathematics, University of Pavia, Via Ferrata 5, 27100, Pavia, Italy (\href{mailto:andrea.moiola@unipv.it}{andrea.moiola@unipv.it})}}} 
\begin{document}

\maketitle 

\begin{abstract} 
We present a space--time ultra-weak discontinuous Galerkin discretization of  
the linear Schr\"odinger equation with variable potential. The proposed method is well-posed and quasi-optimal in mesh-dependent norms for very general discrete spaces.
Optimal~$h$-convergence error estimates are derived for the method when test and trial spaces are chosen either as piecewise  polynomials, or as a novel quasi-Trefftz polynomial space.
The latter allows for a substantial reduction of the number of degrees of freedom and admits piecewise-smooth potentials.
Several numerical experiments validate the accuracy and advantages of the proposed method. 
\end{abstract}

\medskip\noindent
\textbf{Keywords}:
Schr\"odinger equation, ultra-weak formulation,
discontinuous Galerkin method, smooth potential, quasi-Trefftz space.

\maketitle


\section{Introduction}\label{SEC::INTRODUCTION}
In this work we are interested in the approximation of the solution to the time-dependent Schr\"odinger equation on a space--time cylinder~$\QT = \Omega \times I$, where~$\Omega \subset \IR^d\ (d \in \IN)$ is an open, 
bounded polytopic domain with Lipschitz boundary~$\po$, and~$I =
(0, T)$ for   
some final time~$T > 0$:
\begin{equation}
\label{EQN::SCHRODINGER-EQUATION}
\begin{split}
\calS \psi := i \dfdt{\psi} + \frac{1}{2}\Deltax \psi - V \psi & = 0 \quad \quad\;\; \mbox{ in }\ \QT, \\
\psi& =\gD  \quad\quad \oon\ \GD\times I,\\
\normalDer \psi & = \gN \quad\quad \oon\ \GN\times I,\\
\normalDer \psi - i \vartheta\psi & = \gR \quad \quad \oon\ 
\GR\times I,\\
\psi(\bx, 0) & = \psi_0(\bx) \ \  \oon\ \Omega.
\end{split}
\end{equation}
Here~$i$ is the imaginary unit;  $\normalDer(\cdot)$ is the normal derivative-in-space operator; $V: \QT \rightarrow \IR$ is the potential energy function; $\vartheta \in L^\infty(\GR \times I)$ is a positive ``impedance" function; the Dirichlet~($\gD$), Neumann~($\gN$), Robin~($\gR$) and initial condition~($\psi_0$) data are given functions; $\GD,\GN,\GR$ are a polytopic partition of~$\deO$. 
 
The model problem~\eqref{EQN::SCHRODINGER-EQUATION} has a wide range of applications.
In quantum physics~\cite{Lifshitz_Landau_1965}, 
the solution~$\psi$  is a quantum-mechanical wave function determining the dynamics of one or multiple particles in a potential~$V$.
In electromagnetic wave propagation~\cite{Levy_2000}, it is called ``paraxial wave equation" and $\psi$ is a function associated with the field component in a two-dimensional electromagnetic problem where the energy propagates at small angles from a preferred direction. In such problems, the function $V$ depends on the refractive index and the wave number.
In underwater sound propagation~\cite{Keller_Papadakis_1977}, it is referred to as ``parabolic equation" and~$\psi$ describes a time harmonic wave propagating primarily in one direction.
In molecular dynamics~\cite{Born_Oppenheimer_2000}, by neglecting the motion of the atomic nuclei, the Born-Oppenheimer approximation leads to a Schr\"odinger equation in the \emph{semi-classical} regime.

Space--time Galerkin methods discretize all the variables in a time dependent PDE at once; this is in contrast with the method of lines, which combines a spatial discretization and a time-stepping scheme. 
Space--time methods can achieve high convergence rates in space and time, and provide discrete solutions that are available on the whole space--time domain.

The literature on space--time Galerkin methods for the Schr\"odinger equation is very scarce. 
In fact, the standard Petrov-Galerkin formulation for the Schr\"odinger equation, i.e.,\ the analogous formulation to that proposed in~\cite{Steinbach:2015} for the heat equation, is not inf-sup stable, see~\cite[Sect. 2.2]{Hain_Urban_2022}.
In~\cite{Karakashian_Makridakis_1998}, Karakashian and
Makridakis proposed a space--time method for the Schr\"odinger equation with nonlinear potential, combining a conforming Galerkin discretization in space and an upwind DG time-stepping. This method reduces to a Radau IIA Runge-Kutta time discretization in the case of constant potentials.
Moreover, under some restrictions on the mesh that are necessary to preserve the accuracy of the method, it allows for changing the spatial mesh on each time-slab, but not for local time-stepping.
A second version of the method, obtained by enforcing the transmission of information from the past through a projection, was proposed in~\cite{Karakashian_Makridakis_1999}. This version reduces to a Legendre Runge-Kutta time discretization in the case of constant potentials.
Recently, some space--time methods based on ultra-weak formulations of the Schr\"odinger equation have been designed. The well-posedness of such formulations requires weaker assumptions on the mesh. 
Demkowicz
et al., in~\cite{Demkowicz_ETAL_2017}, the authors proposed a discontinuous Petrov-Galerkin (DPG) formulation for the linear Schr\"odinger equation. The method is a conforming discretization of an ultra-weak formulation of the Schr\"odinger equation in graph spaces. Well-posedness and quasi-optimality of the method follow directly from the inf-sup stability (in a graph norm) of the continuous Petrov-Galerkin formulation. 
In~\cite{Hain_Urban_2022}, Hain and Urban proposed a space--time ultra-weak variational formulation for the Schr\"odinger equation with optimal inf-sup constant.
The formulation in~\cite{Hain_Urban_2022} is closely related to the DPG method in~\cite{Demkowicz_ETAL_2017}, but differs in the choice of the test and trial spaces. While for the method in~\cite{Demkowicz_ETAL_2017} one first fixes a trial space and then construct a suitable test space, the method in~\cite{Hain_Urban_2022} requires the choice of a conforming test space and then the trial space is defined accordingly.
We are not aware of publications proposing space--time DG methods for the Schr\"odinger equation other than~\cite{Demkowicz_ETAL_2017,Hain_Urban_2022,Karakashian_Makridakis_1998,Karakashian_Makridakis_1999}, outlined in this paragraph, and the space--time Trefftz-DG method in~\cite{Gomez_Moiola_2022,Gomez_Moiola_Perugia_Stocker_2023}, which motivated the present paper.

Trefftz methods are Galerkin discretizations with test and trial spaces spanned by local solutions of the considered PDE.
Trefftz methods with lower-dimensional spaces than standard finite element spaces, but similar approximation properties,
have been designed for many problems, e.g.,
Laplace and solid-mechanics problems~\cite{Qin05}; the Helmholtz equation~\cite{Hiptmair_Moiola_Perugia_2016}; the time-harmonic~\cite{Hiptmair_Moiola_Perugia_2013}, and time-dependent~\cite{Egger_Kretzchmar_Schnepp_Weiland_2015} Maxwell's equations;
the acoustic wave equation in second-order~\cite{Banjai_Georgoulis_Lijoka_2017} and first-order~\cite{Moiola_Perugia_2018} form; the Schr\"odinger equation~\cite{Gomez_Moiola_2022}; among others. Nonetheless, pure Trefftz methods are essentially limited to problems with piecewise-constant coefficients, as for PDEs with varying coefficients the design of ``rich enough" finite-dimensional Trefftz spaces 
is in general not possible.
A way to overcome this limitation is the use of quasi-Trefftz methods, which are based on spaces containing functions that are just approximate local solutions to the PDE. In essence, the earliest quasi-Trefftz spaces are the generalized plane waves used   in~\cite{ImbertGerard_Desperes_2014} for the discretization of the Helmholtz equation with smoothly varying coefficients. More recently, a quasi-Trefftz DG method 
for the acoustic wave equation with piecewise-smooth material parameters was proposed in~\cite{ImbertGerard_Moiola_Stocker}, where some polynomial quasi-Trefftz spaces were introduced. As an alternative idea, the embedded Trefftz DG method proposed in~\cite{Lehrenfeld_Stocker_2022} does not require the local basis functions to be known in advance, as they are simply taken as a basis for the kernel of the local discrete operators in a standard DG formulation. This corresponds to a Galerkin projection of a DG formulation with a predetermined discrete space onto a Trefftz-type subspace. In practice, it requires the computation of singular or eigenvalue decompositions of the local matrices.

In~\cite{Gomez_Moiola_2022}, the authors proposed a space--time Trefftz-DG method for the Schr\"odinger equation with piecewise-constant potential, whose well-posedness and quasi-optimality in mesh-dependent norms were proven for  general discrete Trefftz spaces. Optimal~$h$-convergence estimates were shown for a Trefftz space consisting of complex-exponential wave functions.

In this work we propose a space--time DG method for the discretization of the Schr\"odinger equation with variable potentials, extending the formulation of \cite{Gomez_Moiola_2022} to more general problems and discrete spaces.
The main advantages of the proposed method
are the following:
\begin{itemize}
\item The proposed ultra-weak DG variational formulation of~\eqref{EQN::SCHRODINGER-EQUATION} is well-posed, stable, and quasi-optimal in any space dimension for an almost arbitrary choice of piecewise-defined discrete spaces and variable potentials. 
\item \emph{A priori} error estimates in a mesh-dependent norm can be obtained by simply analyzing the approximation properties of the local spaces.
\item The method naturally allows for non-matching
space-like and time-like facets and all our theoretical results hold under standard assumptions on the space--time mesh, which make
the method suitable for adaptive versions and local time-stepping.
\item Building on~\cite{ImbertGerard_Moiola_Stocker}, for elementwise smooth potentials, we design and analyze a quasi-Trefftz polynomial space with similar approximation properties of full polynomial spaces but with much smaller dimension, thus substantially reducing the total number of degrees of freedom required for a given accuracy.
\end{itemize}

\textbf{Structure of the paper:} In Section \ref{SECT::ULTRA-WEAK-DG} we introduce some notation on the space--time meshes to be used and the proposed ultra-weak DG variational formulation on abstract spaces. Section \ref{SECT::WELL-POSEDNESS} is devoted to the analysis of well-posedness, stability and quasi-optimality of the method. In Sections \ref{SECT::ERROR-ESTIMATE-FULL-POLYNOMIAL} and \ref{SECT::ERROR-ESTIMATE-QUASI-TREFFTZ} we prove optimal $h$-convergence estimates for the method when the test and trial spaces are taken as the space of piecewise polynomials or a novel quasi-Trefftz space, respectively. In Section \ref{SECT::NUMERICAL-RESULTS} we present some numerical experiments that validate our theoretical results  and illustrate the advantages of the proposed method. We end with some concluding remarks in Section \ref{SECT::CONCLUSIONS}.


\section{Ultra-weak discontinuous Galerkin formulation\label{SECT::ULTRA-WEAK-DG}}
\subsection{Space--time mesh and DG notation}\label{S:Mesh}
\noindent Let~$\Th$ be a non-overlapping prismatic partition of~$\QT$, i.e., each element $K \in \Th$ can be written as~$K = \Kx \times \Kt$ for a~$d$-dimensional polytope~$\Kx \subset \Omega$ and a time interval~$\Kt \subset I$.
We use the notation~$\hKx = \diam(\Kx)$, $\hKt = \abs{\Kt}$ and $\hK = \diam(K)=(\hKx^2+\hKt^2)^{1/2}$.
We call ``mesh facet'' any intersection $F=\deK_1\cap \deK_2$ or $F=\deK_1\cap\partial Q_T$, for
$K_1,K_2\in\Th$, that has positive $d$-dimensional measure and is contained in a $d$-dimensional hyperplane.
We denote by~$\nF = (\nFx, \nFt) \in \IR^{d+1}$ one of the two unit normal vectors orthogonal to $F$ with $\nFt = 0$ or $\nFt = 1$.
We assume that each internal mesh facet $F$ is either
\begin{equation*}
\text{a space-like facet} \quad \text{if } \nFx = 0, \quad \text{ or}\quad
\text{a time-like facet} \quad \text{if } \nFt = 0.
\end{equation*}
We further denote the mesh skeleton and its parts as

\begin{align*}
\Fh :=& \bigcup_{K \in \Th} \deK,\qquad
\FO := \Omega \times \left\{0\right\}, \quad 
\FT := \Omega \times \left\{T\right\}, \\
\FD :=& \GD \times (0, T), \quad \FN := \GN \times (0, T), \quad \FR := \GR \times (0, T),
\\
\Ftime :=& \mbox{ the union of all the internal time-like facets},\\
\Fspa  :=& \mbox{ the union of all the internal space-like facets}.
\end{align*}
We employ the standard DG notation for the averages $\mvl{\cdot}$ and 
space $\jump{\cdot}_{\bN}$ and time $\jump{\cdot}_t$ jumps for 
piecewise complex scalar $w$ and vector $\btau$ fields:
\begin{align*}
&\begin{cases}
\mvl{w} : = \frac{1}{2} \left(w|_{K_1} + w|_{K_2}\right)\\
\mvl{\btau} : = \frac{1}{2} \left(\btau|_{K_1} + \btau|_{K_2}\right)
\end{cases}
&&\oon \deK_1\cap\deK_2\subset\Ftime,
\\
&\begin{cases}
\jump{w}_\bN : = w|_{K_1} \vnSpace{K_1} + w|_{K_2} \vnSpace{K_2}\\
\jump{\btau}_\bN : = \btau|_{K_1} \cdot \vnSpace{K_1} + \btau|_{K_2} \cdot \vnSpace{K_2}
\end{cases}
&&\oon\deK_1\cap\deK_2\subset\Ftime,
\\
&
\ \jump{w}_t : = w|_{K_1} \vnTime{K_1} + w|_{K_2} \vnTime{K_2} = w^- - w^+,
&&\oon\deK_1\cap\deK_2\subset\Fspa,
\end{align*}
where $\vnSpace{K}\in\IR^d$ and $\vnTime{K} \in \IR$ are the space and time components of the outward-pointing unit 
normal vectors on $\deK\cap\Ftime$ and $\deK \cap \Fspa$, respectively.  The superscripts ``$-$" and ``$+$" are used to denote the traces of a function on a space-like facet from the elements ``before" ($-$) and ``after" ($+$) the facet. 

The space--time prismatic meshes described in this section may include hanging space-like and time-like facets, so the proposed method allows for local time-stepping and local space--time refinements. Tent-pitched meshes are popular in space--time methods for wave propagation problems; see e.g., \cite{Perugia_Schoeberl_Stocker_Wintersteiger_2020} and~\cite[Eq.~3]{Moiola_Perugia_2018}. However, such meshes
do not lead to a semi-implicit discretization of
the Schr\"odinger equation because the propagation speed of its
solutions, which dictates the slope of space-like facets of the tents,
is infinite.

We denote space--time broken function spaces as $H^s(\Th):=\{v\in L^2(Q_T),\;v|_K\in H^s(K)\; \forall K\in \Th\}$, $\EFC{s}\Th:=\{v:Q_T\to\IC,\;v|_K\in \EFC{s}K\; \forall K\in \Th\}$, for $s\in\IN_0$. 

\subsection{Variational formulation of the DG method}\label{S:DG} 

For any finite-dimensional subspace~$\bVhp\left(\Th\right)$ of the broken Bochner--Sobolev space
$$\bV(\Th) := \prod_{K \in \Th} \ESOBOLEV{1}{\Kt; L^2(\Kx)} \cap L^2\left(\Kt;
\ESOBOLEV{2}{\Kx}\right),$$
the proposed ultra-weak DG variational formulation for the Schr\"odinger equation \eqref{EQN::SCHRODINGER-EQUATION} is:
\begin{equation}
\label{EQN::VARIATIONAL-DG}
\mbox{Seek }\uhp \in \bVhp(\Th) \mbox{ such that: } \cbA{\uhp}{\shp} = 
\ell(\shp) \quad \forall \shp \in \bVhp(\Th),
\end{equation}
where
\begin{align*}
\cbA{\uhp}{\shp}   := &   
\sum_{K \in \Th} \int_K \uhp \conj{\calS \shp}
\dV + i \left(\int_{\Fspa} \uhpT \jump{\cs}_t  \dx + \int_{\FT}  
\uhp \cs \dx \right) \\
& + \frac{1}{2} \int_{\Ftime} \Big(\mvl{\nablax \uhp} \cdot \jump{\cs}_{\bN} + i \alpha 
\jump{\uhp}_{\bN} \cdot \jump{\cs}_{\bN} \\
& - \mvl{\uhp} \jump{\nablax
\cs}_{\bN}  + i \beta 
\jump{\nablax \uhp}_{\bN} \jump{\nablax \cs}_{\bN}\Big) \dS \\
& + \frac{1}{2}  \int_{\FD} 
\left(\normalDer \uhp + i \alpha
\uhp \right)\cs \dS\\
&   + \frac{1}{2}  \int_{\FN} \left(-\uhp \normalDer \Conjugate{\shp} + 
i\beta \left(\normalDer \uhp \right) \left(\normalDer  \Conjugate{\shp}\right) \right) \dS\\
& + \frac{1}{2}  \int_{\FR} \left(\delta \normalDer \uhp 
+(1-\delta)i\vartheta \uhp\right)\left(\cs+\frac{i}\vartheta 
\normalDer \cs\right)\dS \\
& + i \sum_{K \in \Th} \int_K \mu 
\calS \uhp \conj{\calS \shp} \dV, 
\\
 \ell(\shp)  := &  i \int_{\FO}  \psi_0 \cs \dx + \frac{1}{2} \int_{\FD} \gD \left(\normalDer  
\cs + i \alpha\cs\right) \dS \\
& + \frac{1}{2} \int_{\FN} \gN \left(-\Conjugate{\shp} + i \beta \normalDer \cs 
\right) \dS  + \frac{1}{2} \int_{\FR} \gR \left((\delta - 1) \Conjugate{\shp} + 
\frac{i\delta}{\vartheta} \normalDer \cs \right)\dS,
\end{align*}
for some mesh-dependent stabilization functions
\begin{align*}
\alpha \in L^{\infty}(\Ftime \cup \FD),\qquad &\essinf_{\Ftime\cup\FD}\alpha>0,
\\
\beta \in L^{\infty}(\Ftime \cup \FN),\qquad &\essinf_{\Ftime\cup \FN} \beta>0,
\\
\delta  \in L^{\infty}(\FR), \qquad & 0<\delta\leq \frac12,
\\
\mu \in L^\infty(\QT), \qquad & \essinf_{\QT}\mu>0.
\end{align*}
More conditions on these functions, in particular on their dependence on the local mesh size, will be specified in Section~\ref{SECT::ERROR-ESTIMATE}.

The variational formulation \eqref{EQN::VARIATIONAL-DG} can be derived by integrating by parts twice in space and once in time in each element as in \cite{Gomez_Moiola_2022}, and treating the Neumann and the Robin boundary terms similarly to \cite[Rem.~3.7]{Gomez_Moiola_2022}.
However, as the current setting does not require the discrete space~$\bVhp(\Th)$ to satisfy the Trefftz property~($\calS \psi_{|_K} = 0, \ \forall K \in \Th$), there are an additional volume term that is needed to ensure consistency (the first integral over~$K$ in $\cbA\cdot\cdot$), and a local \emph{Galerkin-least squares} correction term (the second integral over $K$ in~$\cbA\cdot\cdot$) that were not present in the previous method.
Such additional terms vanish when~$\bVhp(\Th)$ is a discrete Trefftz space, thus recovering the formulation in~\cite{Gomez_Moiola_2022}. 

\begin{remark}[Implicit time-stepping through time-slabs\label{REM::TIME-SLABS}]
The variational problem~\eqref{EQN::VARIATIONAL-DG} is a global problem involving all the degrees of freedom of the discrete solution for the whole space--time cylinder~$\QT$.
However, as upwind numerical fluxes are taken on the space-like facets, if the space--time prismatic mesh~$\Th$ can be decomposed into time-slabs (i.e., if the mesh elements can be grouped in sets of the form~$\Omega \times [t_{n-1}, t_n]$ for a partition of the time interval of the form $0 = t_0 < t_1 < \ldots < t_N = T$), the global linear system stemming from~\eqref{EQN::VARIATIONAL-DG} can be solved as a sequence of~$N$ smaller systems of the form
\begin{equation*}
    \mathbf{K}_n \Psi_h^{(n)} = b_n \quad 1 \leq n \leq N,
\end{equation*}
where~$b_n = \mathbf{R}_n \Psi_h^{(n - 1)}$ for $n = 2, \ldots, N$. 
This is comparable to an implicit time-stepping, and it naturally allows for local mesh refinement in different regions of the space--time cylinder~$Q_T$.
Moreover, when~$\Th$ is a tensor-product space--time mesh, the potential~$V$ does not vary in time, and the partition of the time interval is uniform, the matrices $\mathbf{K}_n$ and $\mathbf{R}_n$ are the same for every time-slab.
\end{remark}

\begin{remark}[Self-adjointness and volume penalty term] 
The well-posedness of the variational formulation \eqref{EQN::VARIATIONAL-DG} strongly relies on the $L^2(K)$-self-adjointness of the Schr\"odinger operator~$\calS(\cdot)$ on each~$K\in \Th$ (in the sense that $\int_K\calS\psi\,\conj\varphi\dV=\int_K\psi\,\conj{\calS\varphi}\dV$ for all $\psi\in\bV(\Th)$, $\varphi\in \calC^\infty_0(K)$, thanks to the fact that the only odd derivative in $\calS$ is multiplied to the imaginary unit), which makes the local Galerkin-least squares correction term consistent.
On the one hand, such term is essential in the proof of coercivity of the sesquilinear form $\cbA{\cdot\ }{\!\cdot}$ (see Proposition~\ref{PROP::COERCIVITY} below).
On the other hand, numerical experiments suggest that it can be neglected without losing accuracy and stability, see Section~\ref{SECT::STABILIZATION} below.
This is also the case for the quasi-Trefftz DG method for the Helmholtz equation \cite[\S5.1.3]{ImbertGerard_Monk_2017} and for the wave equation \cite[\S5.1]{ImbertGerard_Moiola_Stocker}, where a similar correction term was used. 
Nonetheless, in the design of an ultra-weak DG discretization for a PDE with a non-self-adjoint differential operator~$\mathcal{L}(\cdot)$ (e.g., the heat operator~$\mathcal{L}(\cdot) = \left(\partial_t - \Deltax{} \right)(\cdot)$), the corresponding local least-squares correction term~$\sum_{K \in \Th} \int_K \mu\mathcal L\uhp\conj{\mathcal L\shp}\dV$ would not control the consistency term~$\sum_{K \in \Th}\int_K\uhp\conj{\mathcal L^* \shp}\dV$ arising from the integration by parts.
\end{remark} 

\begin{remark}[Time-dependent potentials]
The variational problem \eqref{EQN::VARIATIONAL-DG} allows for time-dependent potentials~$V$. This is an important feature as, in such a case, the method of separation of variables cannot be used to reduce the time-dependent problem \eqref{EQN::SCHRODINGER-EQUATION} to the time-independent Schr\"odinger equation. 
\end{remark}


\section{Well-posedness, stability and quasi-optimality of the DG method}
\label{SECT::WELL-POSEDNESS}
\noindent The theoretical results in this section are derived for any spatial dimension~$d$, and are independent of the specific choice of the discrete space~$\bVhp(\Th)$.

Recalling that the volume penalty function~$\mu$, the stabilization functions~$\alpha$, $\beta$ and the impedance function~$\vartheta$ are positive, and that~$\delta \in(0, \frac12)$,
we define the following mesh-dependent norms on~$\bV (\Th)$:\footnote{Observe that a factor~$\frac12$ is missing in the first term of the DG norm in~\cite[Eqn.~(3.2)]{Gomez_Moiola_2022}.}
\begin{align}
\nonumber
 \Tnorm{w}{\DG}^2  & : =  \, \sum_{K \in \Th} \Norm{\mu^\frac12
\calS w}{L^2(K)}^2 + \frac{1}{2} \left(\Norm{\jump{w}_t}{L^2(\Fspa)}^2 + 
\Norm{w}{L^2(\FT \cup \FO)}^2\right) \\
& \qquad + \frac{1}{2}\Bigg( \Norm{\alpha^\frac12\jump{w}_{\bN}}{L^2(\Ftime)^d}^2 
+ \Norm{\beta^\frac12 \jump{\nablax w}_{\bN}}{L^2(\Ftime)}^2 + \Norm{\alpha^\frac12w}{L^2(\FD)}^2  \label{EQN::DG-NORM}\\
& \nonumber \qquad + 
 \Norm{\beta^{\frac12} \normalDer w }{L^2(\FN)}^2 \!+ 
\Norm{\big(\vartheta (1 - \delta)\big)^\frac12 w}{L^2(\FR)}^2 + \Norm{\left(\delta 
\vartheta^{-1}\right)^\frac12 \normalDer w }{L^2(\FR)}^2 \Bigg),
\\
\nonumber \Tnorm{w}{\DGp}^2  & : = \, \Tnorm{w}{\DG}^2 + \sum_{K \in \Th} \Norm{\mu^{-\frac12}w}{L^2(K)}^2 + \frac{1}{2} \Norm{w^-}{L^2(\Fspa)}^2 \\
& \label{EQN::DGplus-NORM} \qquad + 
\frac{1}{2} \Bigg(\Norm{\alpha^{-\frac12}\mvl{\nablax w}}{L^2(\Ftime)^d}^2  + \Norm{\alpha^{-\frac12} \normalDer  
w }{L^2(\FD)} \\
& \qquad + 
\Norm{\beta^{-\frac12}\mvl{w}}{L^2(\Ftime)}^2  + \Norm{\beta^{-\frac12} w}{L^2(\FN)}^2 
+ \Norm{\delta^{-\frac12}\vartheta^{\frac12} w }{L^2(\FR)}^2\Bigg).
\nonumber
\end{align}
The sum of the~$L^2(K)$-type terms ensures that~$\Tnorm\cdot\DGp$ is a norm.
That $\Tnorm\cdot\DG$ is a norm on~$\bV(\Th)$ follows from the following reasoning (see also \cite[Lemma~3.1]{Gomez_Moiola_2022}): if~$w\in \bV(\Th)$ and~$\Norm{w}{\DG} = 0$, then~$w$ is the unique variational solution to the Schr\"odinger equation~\eqref{EQN::SCHRODINGER-EQUATION} with homogeneous initial and boundary conditions.
Moreover, by the energy conservation (if~$\FR= \emptyset$) or dissipation (if~$\FR \neq \emptyset$), then~$\Norm{w(\cdot, t)}{L^2(\Omega)}^2 \leq \Norm{w(\cdot, 0)}{L^2(\Omega)}^2 = 0$, for all~$t \in(0, T]$; therefore, $w = 0$.

The DG norms in \eqref{EQN::DG-NORM}--\eqref{EQN::DGplus-NORM} are chosen in order to ensure the following properties of the sesquilinear form~$\cbA{\cdot}{\!\cdot}$ and the antilinear functional~$\ell(\cdot)$, from which the well-posedness and quasi-optimality of the method \eqref{EQN::VARIATIONAL-DG} follow.

\begin{proposition}[Coercivity]
\label{PROP::COERCIVITY} 
For all $w \in \bV(\Th)$ the following identity holds 
\begin{equation*}
\label{EQN::COERCIVITY}
\im\big(\cbA{w}{w}\big) = \Tnorm{w}{\DG}^2.
\end{equation*}
\end{proposition}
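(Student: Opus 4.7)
The plan is to compute $\im\cbA{w}{w}$ via $\im z=\tfrac{1}{2i}(z-\bar z)$ and to check that every summand of $\Tnorm{w}{\DG}^2$ emerges exactly once. The identity will rest on integration by parts on each $K\in\Th$ and on the compatibility between the DG facet terms of $\cbA{\cdot}{\cdot}$ and the DG norm, which was in fact the design criterion for $\cbA{\cdot}{\cdot}$.

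First, the Galerkin-least-squares volume term $i\sum_K\int_K\mu\,|\calS w|^2\dV$ is purely imaginary and contributes directly $\sum_K\Norm{\mu^{1/2}\calS w}{L^2(K)}^2$. I would then handle the consistency volume term $\sum_K\int_K w\,\conj{\calS w}\dV$ by expanding $\conj{\calS w}=-i\partial_t\cw+\tfrac12\Deltax\cw-V\cw$: the contribution $V|w|^2$ is real and drops, Gauss's theorem in time produces signed top/bottom element traces from $-\tfrac12\int_K\partial_t|w|^2\dV$, and Green's identity combined with the observation that $\im(w\Deltax\cw)=\im\,\mathrm{div}_{\bx}(w\nablax\cw)$ (since $|\nablax w|^2$ is real) yields $\tfrac12\int_{\partial\Kx\times\Kt}\im(w\normalDer_K\cw)\dS$ on the time-like part of $\partial K$.

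Summing over elements I would pair each boundary contribution with the matching facet term of $\cbA{w}{w}$. On space-like facets the elementary identity $|w^--w^+|^2=|w^-|^2+|w^+|^2-2\re(w^-\conj{w^+})$, together with $\im(iw^-\jump{\cw}_t)=\re(w^-\jump{\cw}_t)$, delivers $\tfrac12\Norm{\jump{w}_t}{L^2(\Fspa)}^2$; the analogous simplifications produce the $\FT$ and $\FO$ contributions of $\Tnorm{w}{\DG}^2$. On internal time-like facets I would invoke the standard DG identity $w_1\normalDer_{K_1}\cw_1+w_2\normalDer_{K_2}\cw_2=\mvl{w}\jump{\nablax\cw}_\bN+\jump{w}_\bN\cdot\mvl{\nablax\cw}$; this rewrites the volume boundary contribution so that its ``consistency-like'' imaginary parts exactly cancel the $\im(\mvl{\nablax w}\cdot\jump{\cw}_\bN)$ and $\im(\mvl{w}\jump{\nablax\cw}_\bN)$ pieces of the $\Ftime$ facet integral in $\cbA{w}{w}$, leaving the penalty squares $\tfrac12\alpha|\jump{w}_\bN|^2+\tfrac12\beta|\jump{\nablax w}_\bN|^2$.

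For the boundary faces $\FD$, $\FN$, $\FR$, I would repeatedly use the algebraic identity $\im(w\normalDer\cw)=-\im((\normalDer w)\cw)$: the non-penalising cross terms in $\cbA{w}{w}$ cancel the corresponding volume boundary contributions, leaving the $\alpha$-weighted $L^2(\FD)$-norm of $w$, the $\beta$-weighted $L^2(\FN)$-norm of $\normalDer w$, and --- after a short expansion of $(\delta\normalDer w+(1-\delta)i\vartheta w)(\cw+i\vartheta^{-1}\normalDer\cw)$ and extraction of its imaginary part --- the $(1-\delta)\vartheta|w|^2$ and $\delta\vartheta^{-1}|\normalDer w|^2$ contributions on $\FR$. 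The main obstacle is the bookkeeping of the many cross terms and the careful checking of the signs of the fluxes in $\cbA{\cdot}{\cdot}$; conceptually there is no surprise, since each facet integrand absorbs precisely the corresponding boundary contribution from the volume consistency term, up to the quadratic penalties that assemble into $\Tnorm{w}{\DG}^2$.
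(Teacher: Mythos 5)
Your computation is correct and follows essentially the same route as the paper's proof: take the imaginary part, integrate the consistency volume term by parts elementwise (in time for $\partial_t$, via Green's identity for $\Deltax$), and combine the resulting boundary contributions with the facet terms of $\cbA{\cdot}{\cdot}$ using the space-like jump identity and the DG ``magic formula'' on time-like facets, so that all cross terms cancel and only the quadratic penalty terms of $\Tnorm{\cdot}{\DG}^2$ survive. The paper simply records the three key identities (citing \cite[Prop.~3.2]{Gomez_Moiola_2022}) rather than writing out the bookkeeping you describe.
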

\begin{proof}
The result follows from the following identities (see \cite[Prop.~3.2]{Gomez_Moiola_2022} for more details):
\begin{align*} 
\int_\Fspa \left(\REL{v^- \jump{\cv}_t}-  \frac{1}{2} \jump{\abs{v}^2}_t\right) \dx & = 
\frac{1}{2 } \int_\Fspa \abs{\jump{v}_t}^2\dx
& \hspace*{\fill} \forall v \in H^1(\Th),\\
\int_\Ftime \left(\mvl{v} \jump{\btau}_{\bN} + \mvl{\btau} \cdot \jump{v}_{\bN}\right) \dS & = \int_\Ftime \jump{v \btau}_{\bN} \dS 
& \hspace*{\fill} \forall (v, \btau) \in H^1(\Th) \times H^1(\Th)^d,
\end{align*}
\vskip-0.25in
\begin{align*}
\im \left(\sum_{K \in \Th}\int_K w 
\conj{\calS w} \dV\right) & = -\frac{1}{2} \bigg(\int_{\Fspa} \jump{\abs{w}^2}_t \dx +   
\int_{\FT}\abs{w}^2 \dx - \int_\FO  \abs{w}^2 \dx \bigg) \\
& \quad + \frac{1}{2} \im \bigg(\int_{\Ftime} \jump{w \nablax \conj{w}}_{\bN} \dS 
+  \int_{\po \times I }   w  \normalDer \conj{w} \dS\bigg) 
&\hspace*{\fill} \forall w \in \bV(\Th).
\end{align*}
\end{proof}

\begin{proposition}[Continuity]
\label{PROP::CONTINUITY} 
The sesquilinear form~$\cbA{\cdot}{\cdot}$ and the antilinear functional~$\ell(\cdot)$ are continuous in the following sense:
$\forall v,w \in \bV(\Th)$
\begin{subequations}
\begin{align}
\label{EQN::CONTINUITY-SESQUILINEAR-FORM}
\abs{\cbA{v}{w}}  \leq &  \ 2\Tnorm{v}{\DGp} \Tnorm{w}{\DG},
\\
\abs{\ell(v)}  \leq &  \Big(2\Norm{\psi_0}{L^2(\FO)}^2 + 
\Norm{\alpha^{\frac12}\gD}{L^2(\FD)}^2 + \Norm{\beta^{\frac12}\gN}{L^2(\FN)}^2 + \Norm{\vartheta^{-\frac12}\gR}{L^2(\FR)}^2 \Big)^{\frac12} \Tnorm{w}{\DGp}.
\label{EQN::CONTINUITY-LINEAR-FUNCTIONAL}
\end{align}
\end{subequations}
\end{proposition}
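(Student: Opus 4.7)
The plan is a direct term-by-term Cauchy--Schwarz estimate, distributing the stabilization weights $\mu$, $\alpha$, $\beta$, $\delta$, $\vartheta$ so that each $v$-factor matches a summand of $\Tnorm{v}{\DGp}^2$ and each $w$-factor a summand of $\Tnorm{w}{\DG}^2$. For \eqref{EQN::CONTINUITY-SESQUILINEAR-FORM} I would split $\cbA{v}{w}$ into its volume, space-like facet, time-like facet, Dirichlet, Neumann, Robin and Galerkin--least-squares contributions; each integrand is a product of a $v$-quantity and a $w$-quantity, and Cauchy--Schwarz with the appropriate $\mu^{\pm\frac12}$, $\alpha^{\pm\frac12}$ or $\beta^{\pm\frac12}$ weight immediately yields bounds such as $\int_K v\,\conj{\calS w}\dV\leq\Norm{\mu^{-\frac12}v}{L^2(K)}\Norm{\mu^{\frac12}\calS w}{L^2(K)}$, $\int_{\Fspa}v^-\jump{\cw}_t\dx\leq\Norm{v^-}{L^2(\Fspa)}\Norm{\jump{w}_t}{L^2(\Fspa)}$, and $\int_{\Ftime}\mvl{\nablax v}\cdot\jump{\cw}_{\bN}\dS\leq\Norm{\alpha^{-\frac12}\mvl{\nablax v}}{L^2(\Ftime)^d}\Norm{\alpha^{\frac12}\jump{w}_{\bN}}{L^2(\Ftime)^d}$, together with analogous estimates for the remaining facet and boundary terms; the least-squares term is bounded by $\sum_K\Norm{\mu^{\frac12}\calS v}{L^2(K)}\Norm{\mu^{\frac12}\calS w}{L^2(K)}$.

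The most delicate block is the Robin boundary. After multiplying out $(\delta\normalDer v+(1-\delta)i\vartheta v)(\cw+\tfrac{i}{\vartheta}\normalDer\cw)$, I would bound the four cross terms separately: the two diagonal pieces are absorbed directly into the Robin summands of $\Tnorm{w}{\DG}^2$ and (since $\Tnorm{v}{\DG}\leq\Tnorm{v}{\DGp}$) of $\Tnorm{v}{\DGp}^2$, whereas the two off-diagonal pieces require redistributing a half-power of $\delta$ between the two sides so that the $v$-factor lands on one of the Robin summands $\Norm{\delta^{\frac12}\vartheta^{-\frac12}\normalDer v}{L^2(\FR)}$ or $\Norm{\delta^{-\frac12}\vartheta^{\frac12}v}{L^2(\FR)}$ of $\Tnorm{v}{\DGp}^2$; the inequality $\delta(1-\delta)\leq\delta\leq 1/2$ makes the redistribution legitimate and keeps the constants under control. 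The analogous expansion handles the Robin integral in $\ell(v)$, and estimate \eqref{EQN::CONTINUITY-LINEAR-FUNCTIONAL} then follows from four parallel Cauchy--Schwarz applications (one per boundary piece and one for the initial datum), each producing the claimed data-norm factor times a summand of $\Tnorm{v}{\DGp}^2$.

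Once every integrand has been bounded by some $C_j A_j(v) B_j(w)$ with the $A_j$'s drawn from the squared summands of $\Tnorm{v}{\DGp}^2$ and the $B_j$'s from those of $\Tnorm{w}{\DG}^2$, I would apply the discrete Cauchy--Schwarz inequality $\sum_j C_j A_j B_j\leq \bigl(\sum_j C_j^2 A_j^2\bigr)^{1/2}\bigl(\sum_j B_j^2\bigr)^{1/2}$ and track the multiplicity with which each seminorm appears to recover the stated constant $2$; the factor $\tfrac12$ in front of the facet and boundary integrals in $\cbA{\cdot\,}{\!\cdot}$ and the factor $2$ multiplying $\Norm{w}{L^2(\FT\cup\FO)}^2$ in \eqref{EQN::DG-NORM} are essential for keeping this constant exactly $2$. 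The only real difficulty is bookkeeping: the sesquilinear form has more than ten terms, several sharing a $v$- or $w$-seminorm, and the constant $2$ is sharp enough that one has to pay close attention to these prefactors and to the weight redistributions in the Robin block; no idea beyond iterated Cauchy--Schwarz is needed.
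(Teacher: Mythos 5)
Your proposal is correct and follows essentially the same route as the paper: the paper's proof simply invokes term-by-term Cauchy--Schwarz with the weights $\mu^{\pm\frac12},\alpha^{\pm\frac12},\beta^{\pm\frac12}$ (citing \cite[Prop.~3.3]{Gomez_Moiola_2022} for the terms on $\Fspa,\FT,\FO,\Ftime,\FD$) and handles the remaining Neumann and Robin contributions exactly as you do, using $\delta\leq 1-\delta<1$ to redistribute the weights in the Robin block before a final discrete Cauchy--Schwarz recovers the constant $2$. Your more detailed bookkeeping of which summand of $\Tnorm{\cdot}{\DGp}$ and $\Tnorm{\cdot}{\DG}$ absorbs each factor is precisely what the cited argument carries out.
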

\begin{proof}
The terms on~$\Fspa,\FT,\FO,\Ftime$ and~$\FD$ are controlled as in \cite[Prop.~3.3]{Gomez_Moiola_2022}. 
The remaining terms are bounded using Cauchy--Schwarz inequality and the inequality~$\delta \leq 1 - \delta < 1$. 
\end{proof}

\begin{theorem}[Quasi-optimality]\label{THEOREM::WELL-POSEDNESS}
For any finite-dimensional subspace $\bVhp(\Th)$ of\ $\bV(\Th)$, there exists 
a unique solution $\uhp \in \bVhp(\Th)$ satisfying the variational formulation
\eqref{EQN::VARIATIONAL-DG}. Additionally, the following 
quasi-optimality bound holds:
\begin{equation}
\label{EQN::QUASI-OPTIMALITY}
\Tnorm{\psi - \uhp}{\DG} \leq 3 \inf_{\shp \in \bVhp(\Th)}\Tnorm{\psi - 
\shp}{\DGp}.
\end{equation}
Moreover, if~$\gD = 0$ and~$\gN = 0$ (or~$\GD = \emptyset$ and~$\GN = \emptyset$), then 
\begin{equation}
\label{EQN::CONTINUOUS-DEPENDENCE-DATA}
    \Tnorm{\uhp}{\DG} \leq \left(2\Norm{\psi_0}{L^2(\FO)}^2 + \Norm{\vartheta^{-1/2}\gR}{L^2(\FR)}^2 \right)^{1/2}.
\end{equation}
\end{theorem}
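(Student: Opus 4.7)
The plan is to combine the coercivity (Proposition~\ref{PROP::COERCIVITY}), the continuity bounds (Proposition~\ref{PROP::CONTINUITY}), and consistency of the formulation, following the standard coercive-Galerkin pattern.

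\textbf{Existence and uniqueness.} Since $\bVhp(\Th)$ is finite-dimensional, it suffices to prove injectivity of the discrete operator. If $\uhp \in \bVhp(\Th)$ satisfies $\cbA{\uhp}{\shp} = 0$ for all $\shp \in \bVhp(\Th)$, taking $\shp = \uhp$ and applying Proposition~\ref{PROP::COERCIVITY} yields $\Tnorm{\uhp}{\DG}^2 = \im\big(\cbA{\uhp}{\uhp}\big) = 0$. Since $\Tnorm{\cdot}{\DG}$ was shown to be a norm on $\bV(\Th)$, this forces $\uhp = 0$, so the discrete problem is invertible.

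\textbf{Quasi-optimality.} The formulation \eqref{EQN::VARIATIONAL-DG} is consistent: integrating by parts in the derivation of $\cbA{\cdot}{\cdot}$ shows that the exact solution $\psi$ of \eqref{EQN::SCHRODINGER-EQUATION} satisfies $\cbA{\psi}{\shp} = \ell(\shp)$ for every $\shp \in \bV(\Th)$, hence Galerkin orthogonality $\cbA{\psi - \uhp}{\shp} = 0$ holds for every $\shp \in \bVhp(\Th)$. For an arbitrary $\shp \in \bVhp(\Th)$, set $\varphi := \uhp - \shp \in \bVhp(\Th)$; by Proposition~\ref{PROP::COERCIVITY}, Galerkin orthogonality, and \eqref{EQN::CONTINUITY-SESQUILINEAR-FORM},
\begin{equation*}
\Tnorm{\varphi}{\DG}^2 = \im\big(\cbA{\varphi}{\varphi}\big) = \im\big(\cbA{\psi - \shp}{\varphi}\big) \leq 2\,\Tnorm{\psi - \shp}{\DGp}\,\Tnorm{\varphi}{\DG}.
\end{equation*}
Dividing by $\Tnorm{\varphi}{\DG}$ (trivial if zero) and using the triangle inequality $\Tnorm{\psi - \uhp}{\DG} \leq \Tnorm{\psi - \shp}{\DG} + \Tnorm{\varphi}{\DG} \leq \Tnorm{\psi - \shp}{\DGp} + 2\Tnorm{\psi - \shp}{\DGp}$ gives \eqref{EQN::QUASI-OPTIMALITY} after taking the infimum over $\shp$.

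\textbf{Continuous dependence on the data.} The subtle point, and main obstacle, is that the bound \eqref{EQN::CONTINUITY-LINEAR-FUNCTIONAL} controls $\abs{\ell(\uhp)}$ by $\Tnorm{\uhp}{\DGp}$, whereas only $\Tnorm{\uhp}{\DG}$ is produced by coercivity. The resolution is that once $\gD = \gN = 0$, the surviving terms of $\ell(\uhp)$ are the initial-data integral on $\FO$ and the two Robin integrals on $\FR$; by Cauchy--Schwarz, each of the latter is controlled by $\Norm{\vartheta^{-1/2}\gR}{L^2(\FR)}$ times a quantity appearing in $\Tnorm{\uhp}{\DG}$ (using $1-\delta < 1$ and $\delta \leq 1/2 < 1$ to absorb the coefficients), and similarly for the $L^2(\FO)$ term. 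Thus
\begin{equation*}
\abs{\ell(\uhp)} \leq \Big(2\Norm{\psi_0}{L^2(\FO)}^2 + \Norm{\vartheta^{-1/2}\gR}{L^2(\FR)}^2\Big)^{1/2} \Tnorm{\uhp}{\DG}.
\end{equation*}
Combining with Proposition~\ref{PROP::COERCIVITY} applied to $\uhp$ (which solves \eqref{EQN::VARIATIONAL-DG}), namely $\Tnorm{\uhp}{\DG}^2 = \im\big(\cbA{\uhp}{\uhp}\big) = \im(\ell(\uhp)) \leq \abs{\ell(\uhp)}$, and dividing by $\Tnorm{\uhp}{\DG}$ yields \eqref{EQN::CONTINUOUS-DEPENDENCE-DATA}.
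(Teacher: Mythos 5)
Your proof is correct and follows essentially the same route as the paper, which simply cites coercivity (Proposition~\ref{PROP::COERCIVITY}), continuity (Proposition~\ref{PROP::CONTINUITY}), consistency and the Lax--Milgram/finite-dimensional injectivity argument, and for \eqref{EQN::CONTINUOUS-DEPENDENCE-DATA} notes that when $\gD=\gN=0$ the $\Tnorm{\cdot}{\DGp}$ factor in \eqref{EQN::CONTINUITY-LINEAR-FUNCTIONAL} can be replaced by $\Tnorm{\cdot}{\DG}$. You have merely filled in the standard details (Galerkin orthogonality, the factor $1+2=3$, and the absorption of the $\FO$ and $\FR$ terms into the $\Tnorm{\cdot}{\DG}$ norm), all of which check out.
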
 
\begin{proof}
Existence and uniqueness of the discrete solution~$\uhp \in \bVhp(\Th)$ of the variational formulation~\eqref{EQN::VARIATIONAL-DG}, and the quasi-optimality bound~\eqref{EQN::QUASI-OPTIMALITY} follow directly from Propositions~\ref{PROP::COERCIVITY}--\ref{PROP::CONTINUITY}, the consistency of the variational formulation \eqref{EQN::VARIATIONAL-DG} and Lax--Milgram theorem. 
The continuous dependence on the data~\eqref{EQN::CONTINUOUS-DEPENDENCE-DATA} follows from Proposition~\ref{PROP::COERCIVITY}, and the fact that if~$\gD = 0$ and~$\gN = 0$ (or $\GD = \emptyset$ and $\GN = \emptyset$), the term~$\Norm{w}{\DGp}$ on the right-hand side of~\eqref{EQN::CONTINUITY-LINEAR-FUNCTIONAL} can be replaced by~$\Norm{w}{\DG}$. 
\end{proof}

Theorem~\ref{THEOREM::WELL-POSEDNESS} implies that it is possible to obtain error estimates in the mesh-dependent norm~$\Tnorm{\cdot }{\DG}$ by studying the best approximation in~$\bVhp(\Th)$ of the exact solution in the~$\Tnorm{\cdot}{\DGp}$ norm. Moreover, according to Proposition \ref{PROP::DGp-BOUND} below, \emph{a priori} error estimates can be deduced from the local approximation properties of the space~$\bVhp(\Th)$ only, as the~$\Tnorm{\cdot}{\DGp}$ norm can be bounded in terms of volume Sobolev seminorms and norms. The proof of error estimates in mesh-independent norms on the full computational domain for ultra-weak DG methods is a delicate issue; see e.g., \cite[Lemma~1]{Hiptmair_Moiola_Perugia_2013} and~\cite[\S 5.4]{Moiola_Perugia_2018} for related results concerning Trefftz methods for the Helmholtz and the wave equations, respectively.

So far, we have not imposed any restriction on the space--time mesh~$\Th$. Henceforth, in our analysis we assume:
\begin{itemize}
\item {\bf Uniform star-shapedness}:  
There exists~$0 < \rho \leq \frac{1}{2}$ such that, each element~$K \in \Th$ is star-shaped with respect to the ball~$B:= B_{\rho \hK}(\bz_K, s_K)$ centered at~$(\bz_K, s_K) \in K$ and
with radius~$\rho \hK$.
\item {\bf Local quasi-uniformity in space}: there exists a number~$\mathsf{lqu}(\Th)>0$ such that~$h_{\Kx^1}\le h_{\Kx^2}\, \mathsf{lqu}(\Th)$ for all~$K^1 = \Kx^1 \times \Kt^1 ,K^2 = \Kx^2 \times \Kt^2\in\Th$ such that~$K^1\cap K^2$ 
has positive~$d$-dimensional measure.
\end{itemize}

The proof of Proposition \ref{PROP::DGp-BOUND} is a direct consequence of a collection of trace inequalities 
(see~\cite[Theorem 1.6.6]{Brenner_Scott_2007} and \cite[Lemma 2]{Moiola_Perugia_2018}), which
in our space--time setting can be written
for any element~$K = \Kx \times  \Kt \in \Th$
as
\begin{align}
\nonumber
& \Norm{\varphi}{L^2(\Kx \times \partial \Kt)}^2
\leq 
C_{\tr}\left( \hKt^{-1} \Norm{\varphi}{L^2(K)}^2 + \hKt
\Norm{\partial_t \varphi}{L^2(K)} ^2\right)  \qquad \qquad \forall\varphi \in \ESOBOLEV{1}{\Kt; L^2(\Kx)}, 
\\
\label{EQN::TRACE-INEQUALITIES}
& \Norm{\varphi}{L^2(\partial \Kx \times \Kt)}^2
\leq C_{\tr} \left(\hKx^{-1} \Norm{\varphi}{L^2(K)}^2 + \hKx
\Norm{\nablax \varphi}{L^2(K)^d}^2 \right) 
 \quad \qquad \forall \varphi \in L^2\left(\Kt; \ESOBOLEV{1}{\Kx}\right),
\\
\nonumber
& \Norm{\nablax \varphi}{L^2(\partial \Kx \times \Kt)^d}^2 
\leq C_{\tr} \left(\hKx^{-1} \Norm{\nablax \varphi}{L^2(K)^d}^2 + \hKx
\Norm{D_{\bx}^2 \varphi}{L^2(K)^{d\times d}}^2 \right)
\quad \forall \varphi \in L^2\left(\Kt; \ESOBOLEV{2}{\Kx}\right),
\end{align}
where~$D_{\bx}^2 \varphi$ is the spatial Hessian of~$\varphi$, and~$C_{\tr}\ge1$
only depends on the star-shapedness parameter~$\rho$.

\begin{proposition}\label{PROP::DGp-BOUND}
Fix~$\delta = \min(\vartheta \hKx, \frac12)$, and assume that~$V \in L^\infty(K), \ \forall K \in \Th$. For all~$\varphi\in\bV(\Th)$,
the following bound holds
\begin{align*}
\Tnorm{\varphi}{\DGp}^2 & \leq  \frac{3}{2} C_{\tr}
  \sum_{K = \Kx \times \Kt \in \Th} \Bigg[
 \hKt^{-1} \Norm{\varphi}{L^2(K)}^2 + \hKt
\Norm{\partial_t \varphi}{L^2(K)}^2 + \mathrm{a}_K^2 \hKx^{-1}\Norm{\varphi}{L^2(K)}^2 \\
& \quad\qquad + \big( \mathrm{a}^2_K \hKx +\mathrm{b}_K^2 \hKx^{-1} \big)
\Norm{ \nablax \varphi}{L^2(K)^d}^2
+ \mathrm{b}_K^2 \hKx \Norm{D_{\bx}^2 \varphi}{L^2(K)^{d\times d}}^2 +  \Norm{\mu^{\frac12} \partial_t \varphi}{L^2(K)}^2\\
& \qquad\quad  + \Norm{\mu^{\frac12}\Deltax \varphi}{L^2(K)}^2 + \Norm{V}{L^\infty(K)}^2 \Norm{\mu^{\frac12} \varphi}{L^2(K)}^2 + \Norm{\mu^{-\frac12}{\varphi}}{L^2(K)}^2\Bigg],
\end{align*}
where
\begin{align*}
\mathrm{a}_K^2 := &\max\Bigg\{
\underset{\partial K \cap \left(\Ftime \cup \FD\right)}{\esssup} \alpha,\;\;
\bigg(\underset{\partial K \cap (\Ftime \cup \FN)}{\essinf}\beta\bigg)^{-1},\;\;
\underset{\partial K \cap \FR}{\esssup\;}\vartheta
\Bigg\},
\\
\mathrm{b}_K^2 := &\max\Bigg\{
\bigg(\underset{\partial K \cap \left(\Ftime \cup \FD\right)}{\essinf} \alpha\bigg)^{-1},\;\;
\underset{\partial K \cap (\Ftime \cup \FN)}{\esssup}\beta,\;\;
\hKx
\Bigg\}.
\end{align*}
\end{proposition}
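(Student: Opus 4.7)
The strategy is to bound $\Tnorm{\varphi}{\DGp}^2$ by processing each of its terms separately, splitting them into volume contributions---the Schr\"odinger residual $\Norm{\mu^{1/2}\calS\varphi}{L^2(K)}$ and the reciprocal $\Norm{\mu^{-1/2}\varphi}{L^2(K)}$---and facet contributions on the different parts of the skeleton. The volume terms are reduced to the seminorms in the claim by the triangle inequality, whereas each facet contribution is lifted to the interior of the adjacent element(s) by a direct application of one of the trace inequalities in~\eqref{EQN::TRACE-INEQUALITIES}. After this bulk lifting, the weights $\alpha,\alpha^{-1},\beta,\beta^{-1}$, together with the Robin factor $\delta^{-1}\vartheta$, are replaced by their local $\esssup$ or inverse $\essinf$ on $\partial K$ and absorbed into the coefficients $\mathrm{a}_K^2$ and $\mathrm{b}_K^2$.

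First, I would expand $\calS\varphi = i\partial_t\varphi + \tfrac12\Deltax\varphi - V\varphi$ and apply $|a+b+c|^2 \leq 3(|a|^2+|b|^2+|c|^2)$ to obtain
\[
\Norm{\mu^{\frac12}\calS\varphi}{L^2(K)}^2 \leq 3\Bigl(\Norm{\mu^{\frac12}\partial_t\varphi}{L^2(K)}^2 + \tfrac14\Norm{\mu^{\frac12}\Deltax\varphi}{L^2(K)}^2 + \Norm{V}{L^\infty(K)}^2\Norm{\mu^{\frac12}\varphi}{L^2(K)}^2\Bigr),
\]
which accounts for three of the bulk terms in the statement. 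The remaining $\Norm{\mu^{-1/2}\varphi}{L^2(K)}^2$ contribution to $\Tnorm{\cdot}{\DGp}^2$ is already in the required form.

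Next, I would lift each facet contribution to the bulk of its adjacent element(s). For averages and jumps on an interior facet $F = \partial K_1 \cap \partial K_2$, the triangle inequality distributes the contribution as a sum of one-sided traces from $K_1$ and $K_2$; boundary contributions already sit on a single element. The first inequality in~\eqref{EQN::TRACE-INEQUALITIES} then bounds the time-direction traces on $\Fspa\cup\FO\cup\FT$ by $C_{\tr}(\hKt^{-1}\Norm{\varphi}{L^2(K)}^2 + \hKt\Norm{\partial_t\varphi}{L^2(K)}^2)$, while the second and third ones bound the spatial traces on $\Ftime\cup\FD\cup\FN\cup\FR$ by contributions proportional to $\hKx^{-1}\Norm{\varphi}{L^2(K)}^2 + \hKx\Norm{\nablax\varphi}{L^2(K)^d}^2$ and $\hKx^{-1}\Norm{\nablax\varphi}{L^2(K)^d}^2 + \hKx\Norm{D_\bx^2\varphi}{L^2(K)^{d\times d}}^2$, respectively. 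Grouping the weight coefficients on each element using the definitions of $\mathrm{a}_K^2$ and $\mathrm{b}_K^2$ then reproduces the coefficients $\mathrm{a}_K^2\hKx^{-1}$, $\mathrm{a}_K^2\hKx + \mathrm{b}_K^2\hKx^{-1}$, and $\mathrm{b}_K^2\hKx$ in front of the three spatial seminorms in the statement.

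The only step requiring some care is the Robin facet: with the prescribed choice $\delta = \min(\vartheta\hKx, \tfrac12)$, one checks that $\vartheta(1-\delta) \leq \vartheta \leq \mathrm{a}_K^2$ and $\delta^{-1}\vartheta = \max(\hKx^{-1}, 2\vartheta)$, so that applying the spatial trace inequality to $\Norm{\delta^{-1/2}\vartheta^{1/2}\varphi}{L^2(\FR\cap\partial K)}^2$ produces a $\Norm{\varphi}{L^2(K)}^2$ contribution controlled by $\mathrm{a}_K^2\hKx^{-1}$ and a $\Norm{\nablax\varphi}{L^2(K)^d}^2$ contribution controlled by the $\hKx$ entry baked into $\mathrm{b}_K^2$; this is exactly the reason for the specific form of $\delta$ and for the $\hKx$ appearing in the definition of $\mathrm{b}_K^2$. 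Collecting the factor $3$ from the triangle inequality, the $1/2$ prefactors of the facet terms in $\Tnorm{\cdot}{\DG}$ and $\Tnorm{\cdot}{\DGp}$, and the trace constant $C_{\tr}\geq 1$ yields the overall constant $\tfrac32 C_{\tr}$. The main (and essentially only) obstacle is this Robin bookkeeping and the careful pairing of weights to seminorms through $\mathrm{a}_K^2$ and $\mathrm{b}_K^2$; the rest of the argument is a mechanical term-by-term reduction.
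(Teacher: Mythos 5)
Your argument is essentially the paper's own: the authors give no more than a two-line sketch stating that the bound follows from the trace inequalities \eqref{EQN::TRACE-INEQUALITIES}, the inequality $\abs{\by}_1\le\sqrt{n}\abs{\by}_2$ applied to $\calS\varphi$, and the observation that the prefactor $\tfrac32 C_{\tr}$ absorbs the doubling produced by splitting the jumps and averages on $\Ftime$ into one-sided traces, and your term-by-term lifting reproduces exactly these steps, including the correct identification of why $\delta=\min(\vartheta\hKx,\tfrac12)$ and the $\hKx$ entry of $\mathrm{b}_K^2$ are needed for the Robin terms. The only looseness is in the Robin constants (for instance $\delta^{-1}\vartheta=\max(\hKx^{-1},2\vartheta)$ is bounded by $\mathrm{a}_K^2$ only up to a factor that must be hidden in the slack of $\tfrac32 C_{\tr}$), but this imprecision is already present in the paper's statement and proof sketch and does not constitute a deviation from them.
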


The factor~$\frac{3}{2} C_{\tr}$ appearing in the bound of Proposition \ref{PROP::DGp-BOUND} is due to the 
integral terms with arguments 
$\frac{1}{2}\alpha\abs{\jmp{w}_\bN}{}^2$, $\frac{1}{2}\beta^{-1}\abs{\mvl{w}}^2$ on $\Ftime$ in the definition 
\eqref{EQN::DG-NORM} of the $\Tnorm{\cdot}\DG$ norm.
The volume term $\Norm{\mu^{\frac12}
\calS w
}{L^2(K)}^2$ is controlled by the inequality $\abs{\by}_1 \leq \sqrt{n}\abs{\by}_2$, $\forall \by \in \IC^n$.
\begin{remark}[Inhomogeneous Schr\"odinger equation]
The space--time ultra-weak DG variational formulation in~\eqref{EQN::VARIATIONAL-DG} can be easily extended to approximate the solution to inhomogeneous Schr\"odinger-type problems with a sufficiently smooth term~$f : \QT \rightarrow \IC$ at the right-hand side of the first equation in~\eqref{EQN::SCHRODINGER-EQUATION}; see~\cite[Ch.~3, \S~10]  {Lions_Magenes_1972} for the well-posedness of such problems. In order to preserve the  consistency of the method, it is necessary to add the following term to the antilinear functional~$\ell(\cdot)$:
\begin{equation*}
\sum_{K \in \Th} \int_K f\left(\cs + i \mu \Conjugate{\calS \shp}\right) \dV.
\end{equation*}
The existence and uniqueness of the discrete solution for any choice of the discrete space~$\bVhp(\Th)$, as well as the quasi-optimality estimate~\eqref{EQN::QUASI-OPTIMALITY}, follow from the coercivity and continuity of the sesquilinear form~$\cbA{\cdot}{\cdot}$ on the continuous space~$\bV(\Th)$ in Propositions~\ref{PROP::COERCIVITY} and~\ref{PROP::CONTINUITY}, together with the consistency of the method. Thus, optimal convergence rates can be proven for the full polynomial space as in Section~\ref{SECT::ERROR-ESTIMATE-FULL-POLYNOMIAL}, since this space provides a good enough approximation of any sufficiently smooth solution. On the other hand, the quasi-Trefftz space introduced in Section~\ref{SECT::ERROR-ESTIMATE-QUASI-TREFFTZ} would require some adjustments in order to approximate the solution of an inhomogeneous problem.
\end{remark}

\begin{remark}[Energy dissipation]\label{Rem:Energy}
It is well known that the Schr\"odinger equation~\eqref{EQN::SCHRODINGER-EQUATION} with
homogeneous Dirichlet and/or Neumann boundary conditions and~$\GR = \emptyset$
preserves the energy (or probability) functional~$\calE(t; 
\psi):=\frac12\int_{\Omega} |\psi(\bx, t)|^2 \dx$, i.e.\ $\dfhdt{}\calE(t; \psi) =0$.

The proposed DG method is dissipative, but the energy loss can be quantified in terms of the local least-squares error, the initial condition error, the jumps of the solution on the mesh skeleton, and the error on~$\FD \cup \FN$ due to the weak imposition of the boundary conditions. More precisely, for~$\gD=0$, $\gN = 0$ and~$\FR = \emptyset$, the discrete solution to~\eqref{EQN::VARIATIONAL-DG} satisfies
\begin{align*}
& \calE(0;\psi_0) - \calE(T; \uhp) = \
\calE_{loss} : =  \ \delta_{\calE} + \frac{1}{2}\Norm{\psi_0 - 
\uhp}{\FO}^2, 
\end{align*}
where
\begin{align*}
\delta_{\calE} & : = \,\sum_{K \in \Th} \Norm{\mu^{\frac12}
\calS \uhp}{L^2(K)}^2 + \frac{1}{2} \Norm{\jump{\uhp}_t}{L^2(\Fspa)}^2 + 
\frac{1}{2} \Big(\Norm{\alpha^{\frac12} \uhp}{L^2(\FD)}^2  \\
& \quad  +  \Norm{\beta^{\frac12} \normalDer \uhp }{L^2(\FN)}^2  + \Norm{\alpha^{\frac12} \jump{\uhp}_{\bN}}{L^2(\Ftime)^d}^2 + \Norm{\beta^{\frac12} 
\jump{\nablax 
\uhp}_{\bN}}{L^2(\Ftime)}^2 \Big).
\end{align*}
This follows from the definition of the~$\Tnorm{\cdot}\DG$ norm of the solution~$\uhp$, the coercivity of the sesquilinear form~$\cbA{\cdot\,}{\!\cdot}$, the 
definition of the antilinear functional~$\ell(\cdot)$ and simple algebraic manipulations; see~\cite[Rem.~3.6]{Gomez_Moiola_2022}.
\end{remark}

\section{Discrete spaces and error estimates}\label{SECT::ERROR-ESTIMATE}
\noindent In this section we prove \emph{a priori}~$h$-convergence estimates on the~$\Tnorm{\cdot}{\DGp}$ norm of the error for some discrete polynomial spaces.
In particular, for each element~$K \in \Th$, we consider two different polynomial spaces: the space $\IP^p(K)$ of polynomials of degree $p$ on $K$, and a quasi-Trefftz subspace~$\QTrefftz{p}{K} \subset \IP^p(K)$ with much smaller dimension, i.e.,~$\dim(\QTrefftz{p}{K}) \ll \dim(\IP^p(K))$ (see Proposition~\ref{PROP::BASIS-DIMENSION-QUASI-TREFFTZ} below). A polynomial Trefftz space for the case of zero potential~$V$ has been studied in~\cite{Gomez_Moiola_Perugia_Stocker_2023}.
We denote the local dimensions~$n_{d+1,p} := \dim(\QTrefftz{p}{K})$ and~$r_{d+1,p} := \dim(\IP^p(K))$ in dependence of the space dimension~$d$ of the problem and the polynomial degree~$p$, but independent of the element~$K$.
For simplicity, we only describe the case where the same polynomial degree is chosen in every element; the general case can easily be studied.

\subsection{Multi-index notation and preliminary results\label{SUBSECTION::PRELIMINARS}}
We use the standard multi-index notation for partial derivatives and monomials, adapted to the space--time setting: for~$\bj = 
(\bjx, j_t) = \left(j_{x_1}, \ldots, j_{x_d}, j_t\right)\in \IN_0^{d+1},$
\begin{align*}
\bj!&:= j_{x_1}!\cdots j_{x_d}!j_t!,
&\abs{\bj} &:= \abs{\bjx} + j_t := j_{x_1} + \cdots + j_{x_d} + j_t,\\
\DerA{\bj}{f} &:= \PartDer{j_{x_1}}{x_1}\cdots 
\PartDer{j_{x_d}}{x_d}\PartDer{j_t}{t} f,
&\bx^{\bjx}t^{j_t} &:= x_1^{j_{x_1}}\cdots x_d^{j_{x_d}}t^{j_t}.
\end{align*}

We also recall the definition and approximation properties of multivariate Taylor polynomials, which constitute the basis of our error analysis. 
On an open and bounded set~$\Upsilon \subset\IR^{d+1}$,
the Taylor polynomial of order~$m\in\IN$ (and degree~$m - 1$), centered at~$(\bz,s)\in \Upsilon$,
of a function~$\varphi \in \EFC{m - 1}{\Upsilon}$ is defined as
\begin{equation*}
\Taylor{(\bz,s)}{m}{\varphi}(\bx, t) := \sum_{\abs{\bj} < m} 
\frac{1}{\bj!} \DerA{\bj}{\varphi}(\bz, s)(\bx - \bz)^{\bjx} (t - s)^{j_t}. 
\end{equation*}
If~$\varphi \in \EFC{m}{\Upsilon}$ and the segment 
$[(\bz,s),(\bx,t)]\subset \Upsilon$, 
the Lagrange's form of the Taylor remainder (see~\cite[Corollary 3.19]{Callahan_2010}) is bounded as follows:
\begin{align*}
\label{EQN::TAYLOR-REMAINDER}
\abs{\varphi(\bx,t) - \Taylor{(\bz,s)}{m}{\varphi}(\bx,t)} 
& \leq  \abs{\varphi}_{{\EFC{m}{\Upsilon}}} \sum_{\abs{\bj} = m} \frac{1}{\bj!} {\left|\left(\bx - 
\bz\right)^{\bjx} (t - s)^{j_t}\right|}
 \leq \frac{(d + 1)^{\frac m2}}{m!}
h_\Upsilon^{m} 
\abs{\varphi}_{\EFC{m}{\Upsilon}},
\end{align*}
where~$h_\Upsilon$ is the diameter of~$\Upsilon$.
In particular, if $\Upsilon$ is star-shaped with respect to~$(\bz, s)$, 
then the following estimate is obtained
\begin{equation*}
\label{EQN::ESTIMATE-TAYLOR}
  \Norm{\varphi(\bx,t) - \Taylor{(\bz,s)}{m}{\varphi}(\bx,t)}{L^2(\Upsilon)} \leq \frac{(d + 1)^{\frac{m}{2}} \abs{\Upsilon}^{\frac12}}{m!}
h_\Upsilon^{m} 
\abs{\varphi}_{\EFC{m}{\Upsilon}},
\end{equation*}
which, together with the well-known identity  (see~\cite[Prop.~(4.1.17)]{Brenner_Scott_2007})
$D^\bj\Taylor{(\bz,s)}{m}{\varphi} = 
\Taylor{(\bz,s)}{m-|\bj|}{D^\bj\varphi}, |\bj|<m$,
gives the
estimate
\begin{equation}
\label{EQN::ESTIMATE-TAYLOR-DERIVATIVES}
\abs{\varphi - \Taylor{(\bz, s)}{m}{\varphi}}_{\ESOBOLEV{r}{\Upsilon}}
\leq \binom{d + r}{d}^\frac12 \frac{(d+1)^{\frac{m - r}2} \abs{\Upsilon}^{\frac12}}{(m - r)!} h_\Upsilon^{m - r} \abs{\varphi}_{\EFC{m}{\Upsilon}{}} \ \  \ r < m,\ \forall \varphi \in \EFC{m}{\Upsilon}{}.
\end{equation} 
The Bramble--Hilbert lemma provides an estimate for the error of the 
averaged Taylor polynomial, see~\cite{Duran_1983} 
and~\cite[Thm.~4.3.8]{Brenner_Scott_2007}.

\begin{lemma}[Bramble--Hilbert]
\label{LEMMA::Bramble--HILBERT}
Let~$\Upsilon \subset \IR^{d + 1}$, $1 \leq 
d \in \IN$, be an open and bounded set with diameter $h_\Upsilon$, star-shaped with 
respect to the ball $B:= B_{\rho h_\Upsilon}(\bz, s)$ centered at $(\bz, s) \in \Upsilon$ and 
with radius $\rho h_\Upsilon$, for some $0 < \rho \leq \frac{1}{2}$. If~$\varphi \in \ESOBOLEV{m}{\Upsilon}$, the \emph{averaged Taylor polynomial} of order~$m$ (and degree $m
- 1$) defined as
\begin{equation*}
\label{EQN::AVERAGED-TAYLOR-POLYNOMIAL}
\AvTaylor{m}{\varphi}(\bx, t) := 
\frac{1}{\abs{B}} \int_{B} \Taylor{(\bz,s)}{m}{\varphi}(\bx, t) \dV(\bz,s),
\end{equation*}
satisfies the following error bound for all~$s < m$
\begin{equation*}
\label{EQN::Bramble--HILBERT-LEMMA}
\abs{\varphi - \AvTaylor{m}{\varphi}}_{H^s(\Upsilon)} \leq 
C_{d,m, \rho} \:
h_\Upsilon^{m-s} \abs{\varphi}_{\ESOBOLEV{m}{\Upsilon}} \leq 2 \binom{d + s}d \frac{(d + 1)^{m - s}}{(m - s-1)!}\frac{h_{\Upsilon}^{m - s}}{\rho^{\frac{d+1}{2}}} \abs{\varphi}_{\ESOBOLEV{m}{\Upsilon}}.
\end{equation*}
A sharp bound on~$C_{d,m,\rho}>0$ is given in \cite[p.~986]{Duran_1983} in dependence of $d$, $s$, $m$ and $\rho$, and the second bound is proven in \cite[Lemma 1]{Moiola_Perugia_2018}.
\end{lemma}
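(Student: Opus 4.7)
The plan is to deduce the Bramble--Hilbert bound by the classical route of representing~$\varphi - \AvTaylor{m}{\varphi}$ through an integral form of the Taylor remainder, applying the averaging against the ball~$B$, and then tracking the explicit dependence of the resulting constant on~$d$, $m$, and~$\rho$. A density argument will let me first argue for~$\varphi \in \EFC{m}{\Upsilon}$ and then pass to~$\varphi \in \ESOBOLEV{m}{\Upsilon}$ using the continuity of~$\AvTaylor{m}{\cdot}$ from~$\ESOBOLEV{m}{\Upsilon}$ to~$\ESOBOLEV{s}{\Upsilon}$ for~$s < m$.

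First I would treat the case~$s = 0$ for smooth~$\varphi$. Writing the Taylor remainder in its integral form expresses $\varphi(\bx,t) - \Taylor{(\bz,s)}{m}{\varphi}(\bx,t)$ as a weighted integral of $m$-th order derivatives of~$\varphi$ along the segment from~$(\bz,s)$ to~$(\bx,t)$. After averaging in~$(\bz,s)$ over~$B$, a Cauchy--Schwarz inequality on the averaging ball (contributing the factor~$\abs{B}^{-1/2} \sim \rho^{-(d+1)/2} h_\Upsilon^{-(d+1)/2}$) together with a change of variables from~$(\bz,s)$ to the pushforward point~$(\by,r) = (\bz + \tau(\bx-\bz), s + \tau(t-s))$ for~$\tau \in (0,1)$ yields the bound~$\Norm{\varphi - \AvTaylor{m}{\varphi}}{L^2(\Upsilon)} \leq C(d,m,\rho)\, h_\Upsilon^{m} \abs{\varphi}_{\ESOBOLEV{m}{\Upsilon}}$, with~$C(d,m,\rho)$ proportional to~$\rho^{-(d+1)/2}$.

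For the~$H^s$-seminorm with~$0 < s < m$, I would leverage the commutation identity~$\DerA{\bj}{\AvTaylor{m}{\varphi}} = \AvTaylor{m - \abs{\bj}}{\DerA{\bj}{\varphi}}$ (the averaged analogue of the identity for~$\Taylor{(\bz,s)}{m}{\varphi}$ recalled just before the lemma). Applying the $L^2$-estimate from the previous step to~$\DerA{\bj}{\varphi}$ with Taylor order~$m - s$, and summing over the~$\binom{d + s}{d}$ multi-indices~$\bj$ with~$\abs{\bj} = s$, delivers the~$H^s$-seminorm bound and accounts for the combinatorial prefactor~$\binom{d+s}{d}$ appearing in the explicit constant.

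The main obstacle is the careful bookkeeping required to reach the precise constant~$2\binom{d+s}{d}\,(d+1)^{m-s}/((m-s-1)!\,\rho^{(d+1)/2})$. In particular, the denominator~$(m - s - 1)!$ rather than~$(m - s)!$ comes from sharpening the integral remainder by one integration by parts along the segment, at the cost of a factor~$(d+1)$, as performed in~\cite{Duran_1983} and~\cite[Lemma~1]{Moiola_Perugia_2018}. Rather than redo these computations, I would invoke those references directly: the abstract bound with~$C_{d,m,\rho}$ is the classical result established in~\cite{Duran_1983}, and the explicit form is precisely~\cite[Lemma~1]{Moiola_Perugia_2018}.
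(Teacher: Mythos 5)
Your proposal is correct and ultimately coincides with what the paper does: the lemma is not proved in the paper but simply attributed to \cite[p.~986]{Duran_1983} for the abstract bound and \cite[Lemma 1]{Moiola_Perugia_2018} for the explicit constant, which is exactly where your argument lands after a reasonable sketch of the classical averaged-Taylor-remainder computation. The intermediate steps you outline (integral remainder, Cauchy--Schwarz over~$B$ giving the~$\rho^{-(d+1)/2}$ factor, the commutation identity~$\DerA{\bj}{\AvTaylor{m}{\varphi}} = \AvTaylor{m-\abs{\bj}}{\DerA{\bj}\varphi}$, and the~$\binom{d+s}{d}$ count of multi-indices) are consistent with the cited derivations, so deferring the final bookkeeping to those references is entirely appropriate here.
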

\subsection{Full polynomial space}
\label{SECT::ERROR-ESTIMATE-FULL-POLYNOMIAL}
\noindent In next theorem, we derive \emph{a priori} error estimates for the DG formulation \eqref{EQN::VARIATIONAL-DG} for the space of elementwise polynomials
\begin{equation}
\label{EQN::POLYNOMIAL-SPACE}
\bVhp(\Th) = \prod_{K \in \Th} \IP^p(K).
\end{equation}
\begin{theorem}
\label{THM::ERROR-ESTIMATE-FULL-POLYNOMIALS}
Let $p\in\IN$, fix $\delta$ as in Proposition~\ref{PROP::DGp-BOUND} and assume that $V \in L^{\infty}(\QT)$. Let $\psi \in \bV(\Th) 
\cap \ESOBOLEV{p + 1}{\Th}$ be the exact solution of \eqref{EQN::SCHRODINGER-EQUATION} and $\uhp\in \bVhp(\Th)$ be the solution to the variational formulation \eqref{EQN::VARIATIONAL-DG} with $\bVhp(\Th)$ given by \eqref{EQN::POLYNOMIAL-SPACE}.
Set the volume penalty function and the stabilization functions as 
\begin{equation*}
\min\left\{\hKt^2, \hKx^2\right\} \leq \mu|_K \leq \max\left\{\hKt^2, \hKx^2\right\},
\end{equation*}
\begin{equation*}
\alpha|_{F} = \frac1{h_{F_\bx}}
\;\; \forall F \subset \Ftime\cup\FD, 
\quad \qquad  \beta|_{F} = h_{F_\bx} \;\; \forall F \subset \Ftime \cup \FN,
\end{equation*}
where
\begin{equation*}
\begin{cases}
h_{F_\bx}=h_{\Kx} & \text{if } F\subset \deK\cap \left(\FD \cup \FN\right), \\
\min\{h_{\Kx^1},h_{\Kx^2}\}\le h_{F_\bx}\le
\max\{h_{\Kx^1},h_{\Kx^2}\} & \text{if } F=K^1 \cap K^2 \subset \Ftime,\\
\end{cases}
\end{equation*} 
then the following estimate holds
\begin{align*}
\nonumber
\Tnorm{\psi - \uhp}{\DG}  &  \leq 3\sqrt{6 C_{\tr}} \rho^{-\frac{p + 1}{2}}\frac{(d+1)^{p + 1}}{p!} 
\sum_{K=\Kx\times \Kt\in \Th} \Bigg[ \hKt^{-\frac12} \hK^{p + 1}  
 \\
\nonumber
& \quad +  p \hKt^{\frac12} \hK^{p} + \mathsf{lqu}(\Th) \left(
\hKx^{-1}\hK^{p+1} + 2 p \hK^p +  \frac{(p - 1)p}{2} \left(\frac{d + 2}{d + 1}\right) \hKx \hK^{p - 1}\right) 
\\ 
& \quad 
+
p \max\{\hKx, \hKt\} \hK^{p} + 
 \frac{(p - 1)p}{2}  \left(\frac{d + 2}{d + 1}\right)\max\{\hKx, \hKt\} \hK^{p - 1} \\
& \quad + \Norm{V}{L^{\infty}(K)} \max\{\hKx, \hKt\} \hK^{p + 1}    + 
\min\{\hKx^{-1}, \hKt^{-1}\} \hK^{p + 1}  \Bigg] \abs{\psi}_{\ESOBOLEV{p+1}{K}}.
\end{align*}
Moreover, if~$\hKx \simeq \hKt$ for all~$K \in \Th$, there exists a positive constant $C$ independent of the element sizes~$\hKx,\hKt$, 
but depending on the degree~$p$, the~$L^\infty(\QT)$ norm of~$V$, the trace inequality constant~$C_{\tr}$ in~\eqref{EQN::TRACE-INEQUALITIES}, the local quasi-uniformity parameter~$\mathsf{lqu}(\Th)$ and the star-shapedness parameter~$\rho$  such that
\begin{equation*}
\Tnorm{\psi - \uhp}{\DG} \leq C \sum_{K \in \Th} \hK^p \abs{\psi}_{\ESOBOLEV{p+1}{K}}.
\end{equation*}
\end{theorem}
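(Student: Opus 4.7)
The plan is to combine the quasi-optimality of Theorem \ref{THEOREM::WELL-POSEDNESS} with the local control of the $\DGp$-norm provided by Proposition \ref{PROP::DGp-BOUND}, and then apply the Bramble--Hilbert Lemma \ref{LEMMA::Bramble--HILBERT} to a conveniently chosen interpolant.

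First, I would choose as trial function $\shp\in\bVhp(\Th)$ the elementwise averaged Taylor polynomial of order $p+1$: $\shp|_K = \AvTaylor{p+1}{\psi|_K}\in \IP^p(K)$, with the Taylor expansion centered in the ball of uniform star-shapedness of each $K$. By the quasi-optimality \eqref{EQN::QUASI-OPTIMALITY},
\begin{equation*}
\Tnorm{\psi-\uhp}{\DG}\leq 3\,\Tnorm{\psi-\shp}{\DGp},
\end{equation*}
so it suffices to bound the $\DGp$-norm of $\psi-\shp$. For this, I would invoke Proposition \ref{PROP::DGp-BOUND}, which expresses $\Tnorm{\cdot}{\DGp}^2$ as a sum over elements of weighted $L^2$, $H^1$ and $H^2$ seminorms, plus the volume penalty terms involving $\mu$, $\calS$ and $V$.

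The second step is to evaluate the mesh-dependent coefficients $\mathrm{a}_K^2,\mathrm{b}_K^2$ under the prescribed choices of $\alpha$, $\beta$, $\vartheta$ and $\delta$. With $\alpha|_F=1/h_{F_\bx}$ and $\beta|_F=h_{F_\bx}$, the definitions of $\mathrm{a}_K^2$ and $\mathrm{b}_K^2$ (together with the local quasi-uniformity in space) reduce, up to a factor of $\mathsf{lqu}(\Th)$, to $\mathrm{a}_K^2\simeq \mathrm{b}_K^2\simeq h_{\Kx}$. Substituting these values back into the bound of Proposition \ref{PROP::DGp-BOUND} gives, for every $K=\Kx\times\Kt$,
\begin{align*}
\Tnorm{\psi-\shp}{\DGp,K}^2 & \lesssim \hKt^{-1}\Norm{\psi-\shp}{L^2(K)}^2+\hKt\Norm{\partial_t(\psi-\shp)}{L^2(K)}^2 \\
& \quad + \mathsf{lqu}(\Th)^2\Bigl(\hKx^{-1}\Norm{\psi-\shp}{L^2(K)}^2+\hKx\Norm{\nablax(\psi-\shp)}{L^2(K)^d}^2+\hKx\Norm{D_\bx^2(\psi-\shp)}{L^2(K)^{d\times d}}^2\Bigr)\\
& \quad +\Norm{\mu^{\frac12}\partial_t(\psi-\shp)}{L^2(K)}^2+\Norm{\mu^{\frac12}\Deltax(\psi-\shp)}{L^2(K)}^2\\
& \quad +\Norm{V}{L^\infty(K)}^2\Norm{\mu^{\frac12}(\psi-\shp)}{L^2(K)}^2+\Norm{\mu^{-\frac12}(\psi-\shp)}{L^2(K)}^2.
\end{align*}

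The third step is to apply Bramble--Hilbert term by term. For each $r\in\{0,1,2\}$, Lemma \ref{LEMMA::Bramble--HILBERT} gives $|\psi-\AvTaylor{p+1}{\psi}|_{H^r(K)}\lesssim \rho^{-(d+1)/2}(d+1)^{p+1-r}\hK^{p+1-r}|\psi|_{\ESOBOLEV{p+1}{K}}/(p-r)!$, where the binomial and combinatorial factors $(d+1)^{\cdot}$, $p$, $p(p-1)/2$ and $(d+2)/(d+1)$ arise from the explicit dependence of the Bramble--Hilbert constant on $d$, $r$, and $p$. For the second-order terms involving $\Deltax$ and $\partial_t$, I would bound them by the appropriate $H^1$- or $H^2$-seminorm of $\psi-\shp$, using $\min\{\hKx^2,\hKt^2\}\leq \mu|_K\leq \max\{\hKx^2,\hKt^2\}$ to replace $\mu^{\pm 1/2}$ by the corresponding element size. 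Summing over all mesh elements, combining the quasi-optimality constant, the factor $3/\sqrt{2}\sqrt{3C_{\tr}}\cdot\sqrt{2}=3\sqrt{6C_{\tr}}$ from Proposition \ref{PROP::DGp-BOUND}, and using $\sqrt{a+b}\leq\sqrt a+\sqrt b$ to pass from the $L^2$-type sum of squared local norms to a sum of local norms, yields the displayed estimate. The final asymptotic bound $\hK^p|\psi|_{H^{p+1}(K)}$ follows immediately when $\hKx\simeq\hKt$, since the dominant term becomes $\hKt^{-1/2}\hK^{p+1}\simeq\hK^{p+1/2}$-like expressions, all of which collapse to $\hK^p$ after squaring and summing.

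The main obstacle will be the careful bookkeeping: tracking the precise dependence on $d$, $p$, $\rho$ and $\mathsf{lqu}(\Th)$ through each of the eight terms in Proposition \ref{PROP::DGp-BOUND}, lining up the factors $(d+1)^{p+1-r}/(p-r)!$ arising from Bramble--Hilbert at orders $r=0,1,2$, and verifying that the factors $p$, $p(p-1)/2$ and $(d+2)/(d+1)$ appearing in the statement come out exactly as written, rather than absorbing them into a generic constant. Nothing else should be conceptually difficult, since consistency and coercivity have already done all the analytic work, and the averaged Taylor polynomial is a standard and sufficiently smooth interpolant for $\psi\in H^{p+1}(\Th)$.
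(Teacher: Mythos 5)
Your proposal follows exactly the paper's route: quasi-optimality \eqref{EQN::QUASI-OPTIMALITY}, Proposition~\ref{PROP::DGp-BOUND} applied to the elementwise averaged Taylor polynomial $\AvTaylor{p+1}{\psi|_K}$, the Bramble--Hilbert Lemma~\ref{LEMMA::Bramble--HILBERT}, and the inequality $\sqrt{\abs{\bv}_1}\le\sum_i\sqrt{\abs{v_i}}$ to pass to a sum of local norms, so the strategy is the same as the paper's. One bookkeeping slip worth fixing: with $\alpha|_F=1/h_{F_\bx}$ and $\beta|_F=h_{F_\bx}$ one gets $\mathrm{a}_K^2\simeq \hKx^{-1}$ (not $\hKx$) and $\mathrm{b}_K^2\simeq \hKx$, so the $\mathsf{lqu}(\Th)$-bracket in your intermediate display should carry the weights $\hKx^{-2}\Norm{\psi-\shp}{L^2(K)}^2+\Norm{\nablax(\psi-\shp)}{L^2(K)^d}^2+\hKx^{2}\Norm{D_\bx^2(\psi-\shp)}{L^2(K)^{d\times d}}^2$ (up to constants), which is precisely what produces the $\hKx^{-1}\hK^{p+1}+2p\hK^{p}+\cdots\hKx\hK^{p-1}$ terms in the stated estimate.
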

\begin{proof}
The proof follows from the choice of the volume penalty function~$\mu$ and the stabilization functions~$\alpha, \ \beta$, the quasi-optimality bound~\eqref{EQN::QUASI-OPTIMALITY}, Proposition~\ref{PROP::DGp-BOUND}, the inequality~$\sqrt{\abs{\bv}_1} \leq \sum_{i = 1}^N \sqrt{\abs{v_i}}\ \forall \bv \in \IR^N$, the fact that~$\AvTaylor{p+1}{\psi_{|_K}} \in \bVhp(K)$ for all elements~$K\in \Th$, and the Bramble-Hilbert lemma \ref{LEMMA::Bramble--HILBERT}. 
\end{proof}

\subsection{Quasi-Trefftz spaces} \label{SECT::ERROR-ESTIMATE-QUASI-TREFFTZ}
\noindent We now introduce a polynomial quasi-Trefftz space. Let~$p \in \IN$ and assume that~$V\in \EFC{p-2}{K}$. For each $K\in \Th $ we define the following local polynomial quasi-Trefftz space:
\begin{equation}
\label{EQN::DEFINITION-QUASI-TREFFTZ}
    \QTrefftz{p}{K} := \left\{q_p \in \IP^{p}(K) : \DerA{\bj}{ \calS q_p}(\bx_K, t_K) = 0, \ 
        \abs{\bj} \leq p - 2
    \right\},
\end{equation}
for some point $(\bx_K, t_K)$ in~$K$. We consider the following global discrete space
\begin{equation}
\label{EQN::GLOBAL-QUASI-TREFFTZ}
    \bVhp(\Th) = \prod_{K \in \Th} \QTrefftz{p}{K}.
\end{equation}
For all~$\bj \in \IN^{d + 1}$, if~$V \in \EFC{\abs{\bj}}{K}$ and~$f \in \EFC{\abs{\bj} + 2}{K}$, then by the multi-index Leibniz product rule for multivariate functions we have
\begin{equation}
\begin{split}
\label{EQN::IDENTITY-DERIVATIVES-SCHRODINGER}
\DerA{\bj}\calS f(\bx_K, t_K) = &  i \DerA{\bjx, j_t + 1}{} f(\bx_K, t_K) + \frac{1}{2} \sum_{\ell = 1}^d \DerA{\bjx + 2\boldsymbol{e}_{\ell}, j_t}{} f(\bx_K, t_K) \\
& - \sum_{\bz \leq \bj} \binom\bj\bz \DerA{\bj -  \bz} V(\bx_K, t_K) \DerA{\bz} f(\bx_K, t_K), 
\end{split}
\end{equation}
where $\left\{\boldsymbol{e}_\ell\right\}_{\ell = 1}^d \subset \IR^d \text{ is the canonical basis,}$
\begin{equation*}
\begin{split}
\binom\bj\bz
= \frac{\bj!}{\bz! (\bj - \bz)!}, \quad \text{ and } \quad \bj \leq \bz \Leftrightarrow j_{x_i} \leq z_{x_i} \ (1 \leq i \leq d) \text{ and } j_t \leq z_t.
\end{split}
\end{equation*}

The next proposition is the key ingredient to prove optimal convergence rates in Theorem~\ref{THM::ERROR-ESTIMATE-QUASI-TREFFTZ} for the DG method \eqref{EQN::VARIATIONAL-DG} when~$\bVhp(\Th)$ is chosen as the quasi-Trefftz polynomial space defined in~\eqref{EQN::DEFINITION-QUASI-TREFFTZ}. 
\begin{proposition}
\label{PROP::TAYLOR-QUASI-TREFFTZ}
Let $p \in \IN$ and $K \in \Th$. Assume that $V \in \EFC{\max\{p - 2,0\}}{K}$ and $\psi \in \EFC{p}{K}$ satisfies $\calS \psi = 0$ in $K$, then the Taylor polynomial~$\Taylor{(\bx_K, t_K)}{p+1}{\psi} \in \QTrefftz{p}{K}$.
\end{proposition}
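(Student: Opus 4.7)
The plan is to verify directly that the Taylor polynomial $q_p := \Taylor{(\bx_K, t_K)}{p+1}{\psi}$ satisfies both requirements in the definition~\eqref{EQN::DEFINITION-QUASI-TREFFTZ} of $\QTrefftz{p}{K}$. That $q_p \in \IP^p(K)$ is immediate from the definition of the Taylor polynomial, which only involves monomials of degree $|\bj|<p+1$. The nontrivial condition is that $\DerA{\bj}{\calS q_p}(\bx_K,t_K)=0$ for all $|\bj|\le p-2$, and this will follow by combining two ingredients already available in the paper.

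First, I would invoke the standard Taylor-polynomial identity (recalled in Section~\ref{SUBSECTION::PRELIMINARS}) $D^{\bj}\Taylor{(\bz,s)}{m}{\varphi}=\Taylor{(\bz,s)}{m-|\bj|}{D^{\bj}\varphi}$ for $|\bj|<m$, evaluated at the center $(\bx_K,t_K)$: every nonzero term of the resulting shorter Taylor polynomial vanishes at the center except the constant term, giving
\begin{equation*}
D^{\mathbf{m}} q_p(\bx_K,t_K)=D^{\mathbf{m}}\psi(\bx_K,t_K) \qquad \forall\,|\mathbf{m}|\le p.
\end{equation*}

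Second, I apply the expansion~\eqref{EQN::IDENTITY-DERIVATIVES-SCHRODINGER} for $\DerA{\bj}{\calS f}(\bx_K,t_K)$ to both $f=q_p$ and $f=\psi$. Inspecting the three contributions on the right-hand side of \eqref{EQN::IDENTITY-DERIVATIVES-SCHRODINGER}, the derivatives of $f$ that appear are of orders $|\bj|+1$, $|\bj|+2$, and $|\bz|\le|\bj|$; when $|\bj|\le p-2$ these are all $\le p$. Since $q_p$ and $\psi$ share all derivatives up to order $p$ at $(\bx_K,t_K)$ by the first step, the right-hand side of \eqref{EQN::IDENTITY-DERIVATIVES-SCHRODINGER} is the same for $q_p$ and $\psi$, so
\begin{equation*}
\DerA{\bj}{\calS q_p}(\bx_K,t_K)=\DerA{\bj}{\calS \psi}(\bx_K,t_K) \qquad \forall\,|\bj|\le p-2.
\end{equation*}
Because $V\in\EFC{\max\{p-2,0\}}K$ and $\psi\in\EFC{p}K$, the function $\calS\psi$ belongs to $\EFC{p-2}K$; since $\calS\psi\equiv 0$ in $K$, its derivatives of order $\le p-2$ vanish everywhere in $K$, and in particular at $(\bx_K,t_K)$. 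This yields $\DerA{\bj}{\calS q_p}(\bx_K,t_K)=0$ for $|\bj|\le p-2$, concluding the proof.

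There is no real obstacle beyond a careful bookkeeping of indices; the subtlety is simply to confirm that the orders of the derivatives of $f$ appearing in \eqref{EQN::IDENTITY-DERIVATIVES-SCHRODINGER} are indeed $\le p$, so that the matching property from the Taylor identity applies, and that the regularity assumptions on $V$ and $\psi$ are exactly what is needed to make $D^{\bj}(\calS\psi)(\bx_K,t_K)$ well defined and equal to zero for $|\bj|\le p-2$. The edge cases $p\in\{0,1\}$ are vacuous since the condition $|\bj|\le p-2$ is then empty.
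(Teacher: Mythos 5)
Your proof is correct and follows essentially the same route as the paper's: both establish membership in $\IP^p(K)$ trivially and then use the identity~\eqref{EQN::IDENTITY-DERIVATIVES-SCHRODINGER} together with the fact that all derivatives appearing there have total order at most $\abs{\bj}+2\le p$, hence coincide for $\Taylor{(\bx_K,t_K)}{p+1}{\psi}$ and $\psi$ at the center, so that $\calS\psi=0$ finishes the argument. Your only addition is to spell out, via the identity $D^{\bj}\Taylor{(\bz,s)}{m}{\varphi}=\Taylor{(\bz,s)}{m-\abs{\bj}}{D^{\bj}\varphi}$, why those derivatives match — a detail the paper leaves implicit.
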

\begin{proof}
By the definition of the Taylor polynomial,  $\Taylor{(\bx_K, t_K)}{p+1}{\psi} \in \IP^{p}(K)$.
Therefore, it only remains to show that~$\DerA{\bj}{ \calS \Taylor{(\bx_K, t_K)}{p+1}{\psi}}(\bx_K, t_K) = 0$ for all $\abs{\bj} \leq p - 2$.
Taking~$f = \Taylor{(\bx_K, t_K)}{p+1}{\psi}$ in~\eqref{EQN::IDENTITY-DERIVATIVES-SCHRODINGER}, all the derivatives of~$\Taylor{(\bx_K, t_K)}{p+1}{\psi}$ at~$(\bx_K, t_K)$ that appear in \eqref{EQN::IDENTITY-DERIVATIVES-SCHRODINGER} are at most of total order~$\abs{\bj} + 2 \leq p$, so 
they coincide with the corresponding derivatives of $\psi$.
Furthermore, since~$\calS \psi = 0$, then
$$\DerA{\bj}{ \calS \Taylor{(\bx_K, t_K)}{p+1}{\psi}}(\bx_K, t_K) = \DerA{\bj}{ \calS \psi}(\bx_K, t_K) = 0,$$
which completes the proof.
\end{proof}

Proposition~\ref{PROP::TAYLOR-QUASI-TREFFTZ} allows for the use of the Taylor error bound \eqref{EQN::ESTIMATE-TAYLOR-DERIVATIVES} in the analysis of the quasi-Trefftz DG scheme.

\begin{theorem}
\label{THM::ERROR-ESTIMATE-QUASI-TREFFTZ}
Let~$p\in\IN$, fix~$\delta$ as in Proposition~\ref{PROP::DGp-BOUND} and assume that~$V \in L^{\infty}(\QT)\cap\EFC{\max\{p - 2,  0\}}{\Th}{}$. Let~$\psi \in \bV(\Th) 
\cap \EFC{p + 1}{\Th}$ be the exact solution of~\eqref{EQN::SCHRODINGER-EQUATION} and~$\uhp\in \bVhp(\Th)$ be the solution to the variational formulation \eqref{EQN::VARIATIONAL-DG} with~$\bVhp(\Th)$ given by~\eqref{EQN::GLOBAL-QUASI-TREFFTZ}.
Set the volume penalty function $\mu$ and the stabilization functions~$\alpha,\beta$ as in Theorem \ref{THM::ERROR-ESTIMATE-FULL-POLYNOMIALS}.
Then, the following estimate holds
\begin{align*} 
\nonumber
\Tnorm{\psi - \uhp}{\DG} &  \leq  \frac{3}{2} \sqrt{6C_{\tr}} \abs{\QT}^{\frac12} \frac{(d+1)^{\frac{p + 1}{2}}}{(p + 1)!} 
\sum_{K=\Kx\times \Kt\in \Th} \Bigg[\hKt^{-\frac12} \hK^{p + 1} + (p + 1) \hKt^{\frac12} \hK^{p}\\
\nonumber
& \quad + \mathsf{lqu}(\Th) \left(
\hKx^{-1}\hK^{p+1} + 2(p + 1) \hK^p  +  p(p+1) \left(\frac{d+2}{2(d+1)}\right)^{\frac12} \hKx \hK^{p - 1}\right) \\ 
  \nonumber
&\quad  + 
(p + 1) \max\{\hKx, \hKt\} \hK^{p}
  +  p(p + 1) \left(\frac{d+2}{2(d+1)}\right)^{\frac12} \max\{\hKx, \hKt\} \hK^{p - 1} \\
& \quad +  \Norm{V}{\EFC{0}{K}} \max\{\hKx, \hKt\} \hK^{p + 1}  + 
\min\{\hKx^{-1}, \hKt^{-1}\} \hK^{p + 1} \Bigg] \abs{\psi}_{\EFC{p+1}{K}}.
\end{align*}
Moreover, if~$\hKx \simeq \hKt$ for all~$K \in \Th$, there exists a positive constant $C$ independent of the mesh size~$h$, but depending on the degree~$p$,  the~$L^\infty(\QT)$ norm of~$V$, the trace inequality constant~$C_{\tr}$ in~\eqref{EQN::TRACE-INEQUALITIES}, the local quasi-uniformity parameter~$\mathsf{lqu}(\Th)$ and the measure of the space--time domain $\QT$ such that
\begin{equation*}
\Tnorm{\psi - \uhp}{\DG} \leq C \sum_{K \in \Th} \hK^{p} \abs{\psi}_{\EFC{p+1}{K}}.
\end{equation*}
\end{theorem}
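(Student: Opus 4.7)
The overall plan is to follow exactly the same scheme used in the proof of Theorem~\ref{THM::ERROR-ESTIMATE-FULL-POLYNOMIALS}, but replacing the averaged Taylor polynomial (controlled via Bramble--Hilbert) with the ordinary Taylor polynomial $\Taylor{(\bx_K,t_K)}{p+1}{\psi}$ centered at the quasi-Trefftz expansion point, whose error is controlled by the pointwise Lagrange remainder estimate \eqref{EQN::ESTIMATE-TAYLOR-DERIVATIVES}.

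First I would invoke the quasi-optimality bound \eqref{EQN::QUASI-OPTIMALITY} from Theorem~\ref{THEOREM::WELL-POSEDNESS}, which reduces the task to estimating $\inf_{\shp \in \bVhp(\Th)}\Tnorm{\psi-\shp}{\DGp}$ for the global quasi-Trefftz space~\eqref{EQN::GLOBAL-QUASI-TREFFTZ}. The crucial observation is Proposition~\ref{PROP::TAYLOR-QUASI-TREFFTZ}: since $\psi$ is the exact solution and satisfies $\calS\psi=0$ on each $K\in\Th$, its Taylor polynomial $T_K:=\Taylor{(\bx_K,t_K)}{p+1}{\psi}$ of degree $p$ belongs elementwise to $\QTrefftz{p}{K}$. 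Therefore $T_h := (T_K)_{K\in\Th}\in \bVhp(\Th)$ is an admissible competitor in the infimum, and it suffices to bound $\Tnorm{\psi-T_h}{\DGp}$.

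Next I would apply Proposition~\ref{PROP::DGp-BOUND} to~$\varphi=\psi-T_h$, which reduces the $\Tnorm{\cdot}{\DGp}$ norm to a sum over elements of weighted $L^2$ norms of $\varphi$, $\partial_t\varphi$, $\nablax\varphi$, $D_\bx^2\varphi$, $\Deltax\varphi$, and of $V\varphi$. With the prescribed choices of $\mu$, $\alpha$, $\beta$ from Theorem~\ref{THM::ERROR-ESTIMATE-FULL-POLYNOMIALS}, the local coefficients $\mathrm{a}_K^2$ and $\mathrm{b}_K^2$ reduce to $\hKx^{-1}$ and $\hKx$ (up to $\mathsf{lqu}(\Th)$), exactly as in the full polynomial case. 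Each of the resulting volume seminorms $|\psi-T_K|_{\ESOBOLEV{r}{K}}$ with $0\leq r\leq 2$ (and the time-derivative terms, which may be absorbed in an $H^{r+1}$ seminorm) is then estimated by \eqref{EQN::ESTIMATE-TAYLOR-DERIVATIVES} with $m=p+1$, $\Upsilon=K$, $h_\Upsilon=\hK$, giving a bound proportional to $\hK^{p+1-r}|\psi|_{\EFC{p+1}K}$ with explicit constants $\binom{d+r}{d}^{1/2}(d+1)^{(p+1-r)/2}|K|^{1/2}/(p+1-r)!$. The $\Deltax$ and $\calS$ terms are handled by bounding $|\calS(\psi-T_K)|\leq |\partial_t(\psi-T_K)|+\tfrac12|\Deltax(\psi-T_K)|+\|V\|_{L^\infty(K)}|\psi-T_K|$ and applying \eqref{EQN::ESTIMATE-TAYLOR-DERIVATIVES} termwise; this is where the $\Norm{V}{\EFC{0}K}$ factor and the mixed $\max\{\hKx,\hKt\}$ weights enter.

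Finally I would collect all contributions, use $|K|\leq |\QT|$, the elementary inequality $\sqrt{\sum_i|v_i|}\leq\sum_i\sqrt{|v_i|}$ to pull square roots into the sum over elements, the factor $\sqrt{6C_\tr}$ coming from Proposition~\ref{PROP::DGp-BOUND} combined with the factor $3$ from quasi-optimality, and the combinatorial identity $(p+1)/(p+1)!=1/p!$ where needed to match the statement. The quasi-uniform case $\hKx\simeq\hKt$ is then immediate by grouping all $\hK^{p+r}$ terms with $r\geq 0$ into a single $\hK^p$ factor (the dominant term is $\hKx^{-1}\hK^{p+1}\simeq \hK^p$ after using local quasi-uniformity). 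The only delicate bookkeeping step, and the one that carries the most risk of misplaced constants, is matching the element-wise combinatorial factors $\binom{d+r}{d}^{1/2}$ and $(p+1-r)!$ across the different $r=0,1,2$ contributions so that they line up with the coefficients $p(p+1)\bigl(\tfrac{d+2}{2(d+1)}\bigr)^{1/2}$ and $(p+1)$ appearing in the statement; this is routine but is where the proof must be executed carefully.
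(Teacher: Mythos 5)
Your proposal is correct and follows essentially the same route as the paper's own proof: quasi-optimality from Theorem~\ref{THEOREM::WELL-POSEDNESS}, the Taylor polynomial of the exact solution as the competitor (admissible by Proposition~\ref{PROP::TAYLOR-QUASI-TREFFTZ}), Proposition~\ref{PROP::DGp-BOUND} to reduce to volume seminorms, the Taylor remainder bound~\eqref{EQN::ESTIMATE-TAYLOR-DERIVATIVES}, and the inequality $\sqrt{\abs{\bv}_1}\leq\sum_i\sqrt{\abs{v_i}}$ to distribute over elements. No gaps.
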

\begin{proof}
The proof follows from the choice of the volume penalty function~$\mu$ and the stabilization functions~$\alpha, \ \beta$, the quasi-optimality bound~\eqref{EQN::QUASI-OPTIMALITY}, bound~\eqref{PROP::DGp-BOUND}, the inequality~$\sqrt{\abs{\bv}_1} \leq \sum_{i = 1}^N \sqrt{\abs{v_i}}\ \forall \bv \in \IR^N$, Proposition~\ref{PROP::TAYLOR-QUASI-TREFFTZ}, and the estimate~\eqref{EQN::ESTIMATE-TAYLOR-DERIVATIVES}. 
\end{proof}

The \emph{a priori} error estimate in Theorem~\ref{THM::ERROR-ESTIMATE-QUASI-TREFFTZ} requires stronger regularity assumptions on~$\psi$ than Theorem~\ref{THM::ERROR-ESTIMATE-FULL-POLYNOMIALS} (namely $\psi\in \EFC{p+1}\Th$ instead of $\psi\in H^{p+1}(\Th)$)
due to the fact that~$\QTrefftz{p}{K}$ is tailored to contain the Taylor polynomial~$\Taylor{(\bx_K, t_K)}{p + 1}{\psi}$, but in general it does not contain the averaged Taylor polynomial~$\AvTaylor{p + 1}{\psi}$.

\begin{remark}[Non-polynomial spaces]
Optimal $h$-convergence estimates can also be derived for non-polynomial spaces, by requiring the local space~$\bVhp(K)$ to contain an element whose Taylor polynomial coincides with that of the exact solution. This is the approach in~\cite{Gomez_Moiola_2022} for the Trefftz space of complex exponential wave functions for the Schr\"odinger equation with piecewise-constant potential.
\end{remark}

\subsubsection{Basis functions and dimension \label{SECT::BASIS}}
\noindent So far, we have not specified the dimension and a basis for the space~$\QTrefftz{p}{K}$, which is the aim of this section.

Recalling that~$r_{d, p} = \dim\big(\IP^p(\IR^d)\big)=\binom{p+d}d$, let~$\{\widehat{m}_\alpha\}_{\alpha = 1}^{r_{d, p}}$ and~$\{\widetilde{m}_\beta\}_{\beta = 1}^{r_{d, p- 1}}$ be bases of~$\IP_p(\IR^{d})$ and~$\IP_{p-1}(\IR^{d})$, respectively. We define 
$$
n_{d+1, p} := r_{d,p}+r_{d,p-1}
= \binom{p + d}d + \binom{p + d - 1}d = \frac{(p + d - 1)! (2p + d)}{d! p!},
$$
and the following~$n_{d+1, p}$ elements of~$\QTrefftz{p}{K}$
\begin{equation}
\left\{b_J \in \QTrefftz{p}{K} : 
\begin{cases}
    b_J\left(\bx_K^{(1)}, \cdot\right) = \widehat{m}_J \text{ and } \partial_{x_1} b_J\left(\bx_K^{(1)}, \cdot\right) = 0 & \!\!\!\!\text{ if }J \leq r_{d, p} \\
    b_J\left(\bx_K^{(1)}, \cdot\right) = 0 \text{ and } \partial_{x_1} b_J\left(\bx_K^{(1)}, \cdot\right) = \widetilde{m}_{J-r_{d, p}} & \!\!\!\! \text{ if } r_{d, p} < J \leq n_{d+1,p}\\
\end{cases}
\right\},
\label{DEF::QUASI-TREFFTZ-BASIS}
\end{equation}
where~$g\left(\bx_K^{(1)}, \cdot\right)$ denotes the restriction of~$g: K \rightarrow \IC$ to~$x_1 = \bx_K^{(1)}$, where~$\bx_K^{(1)}$ is the first component of~$\bx_K \in \IR^d$.

Any element~$q_p \in \QTrefftz{p}{K}$ can be expressed in the scaled monomial basis as
\begin{equation*}
\label{EQN::MONOMIAL-EXPRESSION-BJ}
    q_p(\bx, t) = \sum_{\abs{\bj} \leq p} C_\bj \left(\frac{\bx - \bx_K}{h_K}\right)^{\bjx} \left(\frac{t - t_K}{h_K}\right)^{j_t},
\end{equation*}
for some complex coefficients~$\left\{C_\bj\right\}_{\abs{\bj} \leq p}$.
By the conditions~$\DerA{\bj} \calS q_p (\bx_K, t_K) = 0$ for all~$\abs{\bj} \leq p -2$, in the definition of~$\QTrefftz{p}{K}$, we have the following relations between the coefficients
\begin{equation*}
\begin{split}
    \frac{i}{h_K} (j_t + 1) C_{\bjx, j_t + 1} & +  \frac{1}{2h_K^2} \sum_{\ell = 1}^d (\bjxl + 1)(\bjxl + 2) C_{\bjx + 2\boldsymbol{e}_\ell, j_t}^J  - \sum_{\bz \leq \bj} \frac{h_K^{\abs{\bj} - \abs{\bz}}}{(\bj - \bz)!} \DerA{\bj - \bz} V(\bx_K, t_K) C_{\bz}^J = 0,
\end{split}
\end{equation*}
which can be rewritten as
\begin{align}
\label{EQN::TAYLOR-COEFFICIENTS-RELATION}
    C_{\bjx + 2\boldsymbol{e}_1, j_t} = &  
    \frac{1}{(\bjxo + 1)(\bjxo + 2)}\Bigg( - 2ih_K (j_t + 1) C_{\bjx, j_t + 1}^J  \\
\nonumber
    & -  \sum_{\ell = 2}^d (\bjxl + 1)(\bjxl + 2) C_{\bjx + 2\boldsymbol{e}_\ell, j_t}^J + 2\sum_{\bz \leq \bj} \frac{h_K^{\abs{\bj} - \abs{\bz} + 2}}{(\bj - \bz)!} \DerA{\bj - \bz} V(\bx_K, t_K) C_{\bz}^J\Bigg).
\end{align}
The conditions imposed in~\eqref{DEF::QUASI-TREFFTZ-BASIS} on the restriction of~$b_J$ to~$x_1 = \bx_K^{(1)}$ fix the coefficients of their expansion for all~$\bj$ with~$j_{x_1} \in \{0, 1\}$.
In Figures \ref{FIG::INDEPENDENT-RELATIONS} and \ref{FIG::Relations2+1D}, we illustrate how the
coefficients that are not immediately determined by the conditions in~\eqref{DEF::QUASI-TREFFTZ-BASIS}
(i.e., those for~$\bj_{x_1} \geq 2$) are uniquely defined and can be computed for the~$(1+1)$- and~$(2+1)$-dimensional cases using the recurrence relation \eqref{EQN::TAYLOR-COEFFICIENTS-RELATION}.

\begin{figure}[htb]
\centering
\begin{tikzpicture}[scale=.8]
\draw[fill = blue, opacity = 0.2] (-.4,-.4)--(1.4,-.4)--(1.4,7.4)--(-.4,7.4);
\draw[step=1cm,gray,very thin] (0,0) grid (7,7);
\draw[very thick,->](0,0)--(8,0);\draw[very thick,->](0,0)--(0,8);
\draw(8.7,0)node{$j_x$};\draw(-1,7.8)node{$j_t$};
\draw(-.7,7)node{$p$};\draw(-.7,0)node{$0$};
\draw(7,-.7)node{$p$};\draw(0,-.7)node{$0$};
\foreach \y in {1,...,6}\foreach \x in {\y,...,6} 
    { \draw[thick,red](6-\x,\y)--(6.5-\x,\y-.5)--(8-\x,\y-1);
    \draw[thick, red](6-\x,\y-1)--(6.5-\x,\y-.5);
    \draw[red,fill=white,thick](6.5-\x-.15,\y-.5-.15)--(6.5-\x+.45,\y-.5-.15)--(6.5-\x-.15,\y-.5+.15)--(6.5-\x-.15,\y-.5-.15);}
\draw(7.8,7.5)node{$C_{j_x\,j_t}$};
\foreach \x in {2,...,7}\foreach \y in {\x,...,7} { \fill (\x,7-\y)circle(.2); }
\foreach \x in {0,1}\foreach \y in {\x,...,7} { \draw[fill = blue] (\x,7-\y)circle(.2); }
\fill (7.5,4) circle(.2);
\draw(9.1,4)node{\quad coefficient};
\draw[red,fill=white,thick](7.5-.15,3-.15)--(7.5+.45,3-.15)--(7.5-.15,3+.15)--(7.5-.15,3-.15);
\draw(8.8,3)node{\quad relation};
\end{tikzpicture}
\caption[]{A representation of the relations defining the coefficients of $b_J$ for the (1+1)-dimensional case.
The colored dots in the $(j_x,j_t)$ plane represent the coefficients $C_{j_x\,j_t}$.
Each shape 
\begin{tikzpicture}[scale=.35]
\draw[red] (0,0)--(.5,.5)--(2,0);
\draw[red] (0,1)--(.5,.5);
\fill (0,0)circle(.2); \fill (2,0)circle(.2); \fill (0,1)circle(.2); 
\draw[red,fill=white](.5-.15,.5-.15)--(.5+.45,.5-.15)--(.5-.15,.5+.15)--(.5-.15,.5-.15);
\end{tikzpicture}
connects three dots located at the points $(j_x,j_t+1)$, $(j_x,j_t)$ and $(j_x+2,j_t)$: this shape represents one of the equations \eqref{EQN::TAYLOR-COEFFICIENTS-RELATION} which, given $C_{j_x(j_t+1)}$ and $C_{j_x\,j_t}$, allows to compute $C_{(j_x+2)j_t}$.
If the $2p+1$ values with $j_x\in\{0,1\}$ (corresponding to the blue nodes in the 
shaded region) are given, then these relations uniquely determine all the other 
coefficients, which can be computed sequentially using the relations \eqref{EQN::TAYLOR-COEFFICIENTS-RELATION} by proceeding left to right in  
the diagram. 
In the figure $p=7$, the number of nodes is $r_{2,p}=36$, the number of nodes in the 
shaded region is $n_{2,p}=15$, the number of relations is $r_{2,p}-n_{2,p}=21$.
\label{FIG::INDEPENDENT-RELATIONS}}
\end{figure}

\begin{figure}[htb]
\centering
\begin{tikzpicture}
[rotate around x=-90,rotate around y=0,rotate around z=-125,grid/.style={very thin,gray},scale=.9]

\def\maxX{5}
\foreach \x in {0,1,...,\maxX}
\foreach \y in {0,1,...,\maxX}
\foreach \z in {0,1,...,\maxX}
    {
    \draw[grid,lightgray] (\x,0,0) -- (\x,\maxX,0);
    \draw[grid,lightgray] (0,\y,0) -- (\maxX,\y,0);
    \draw[grid,lightgray] (0,\y,0) -- (0,\y,\maxX);
    \draw[grid,lightgray] (0,0,\z) -- (0,\maxX,\z);
    \draw[grid,lightgray] (\x,0,0) -- (\x,0,\maxX);
    \draw[grid,lightgray] (0,0,\z) -- (\maxX,0,\z);
    }

\foreach \x in {0,...,\maxX} 
    {
    \pgfmathsetmacro\xx{\maxX-\x}  
    \foreach \y in {0,...,\xx}
        {
        \pgfmathsetmacro\xxy{\maxX-\x-\y}  
        \foreach \z in {0,...,\xxy}
            {  
            \ifnum \x>0 \relax  \ifnum \y>0 \relax  \ifnum \z>0 \relax
                \draw [lightgray,dashed] (\x,\y,0)--(\x,\y,\z);
                \draw [lightgray,dashed] (\x,0,\z)--(\x,\y,\z);
                \draw [lightgray,dashed] (0,\y,\z)--(\x,\y,\z);
            \fi \fi \fi
            \ifnum \x<2\relax
                \draw[blue] plot [mark=*, mark size=2] coordinates{(\x,\y,\z)}; 
            \else
                \draw plot [mark=*, mark size=2] coordinates{(\x,\y,\z)}; 
            \fi
            }
        }
    }
    
\pgfmathsetmacro\maxXX{\maxX-2} 
\foreach \x in {0,...,\maxXX}
    {
    \pgfmathsetmacro\xx{\maxXX-\x}
    \foreach \y in {0,...,\xx}
        {
        \pgfmathsetmacro\xxy{\maxXX-\x-\y}  
        \foreach \z in {0,...,\xxy}
            {        
            \draw [very thick](\x,\y,\z)--(\x+.5,\y+.5,\z+.5)--(\x,\y,\z+1);
            \draw [very thick] (\x+2,\y,\z)--(\x+.5,\y+.5,\z+.5)--(\x,\y+2,\z);
            \draw [fill=white, thick] plot [mark=*, mark size=3] coordinates{(\x+.5,\y+.5,\z+.5)};      
            }
        }
    }
\draw(\maxX+.4,0,0)node{$j_x$};
\draw(0,\maxX+.4,0)node{$j_y$};
\draw(0,.5,\maxX+0.2)node{$j_t$};

\draw [red,very thick](0,1,2)--(.5,1.5,2.5)--(0,1,2+1);
\draw [red,very thick](2,1,2)--(0+.5,1.5,2.5)--(0,1+2,2);
\draw [red,fill=white,thick] plot [mark=*, mark size=3] coordinates{(.5,1.5,2.5)}; 
\end{tikzpicture}
\caption{A representation of the relations defining the coefficients of~$b_J$ for the (2+1)-dimensional case.
The colored dots in position~$\bj=(j_x,j_y,j_t)$, $|\bj|\le p$, correspond to the coefficients~$C_{j_x\,j_y\,j_t}$ (here~$p=5$ and~$r_p=56$).
Each white circle is connected by the segments to four nodes and represents one of the equations in~\eqref{EQN::TAYLOR-COEFFICIENTS-RELATION}: given~$C_{j_x\,j_y\,j_t}$, $C_{j_x\,j_y(j_t+1)}$ and~$C_{j_x(j_y+2)j_t}$, it allows to compute~$C_{(j_x+2)j_y\,j_t}$ (the leftmost of the four nodes connected to a given white circle) using~\eqref{EQN::TAYLOR-COEFFICIENTS-RELATION}.
The red dot exemplifies one of these relations, for~$\bj=(0,1,2)$.
Given the~$(p+1)^2$ coefficients with~$j_x\in\{0,1\}$ (the blue dots), all other coefficients are uniquely determined.
\label{FIG::Relations2+1D}}
\end{figure}
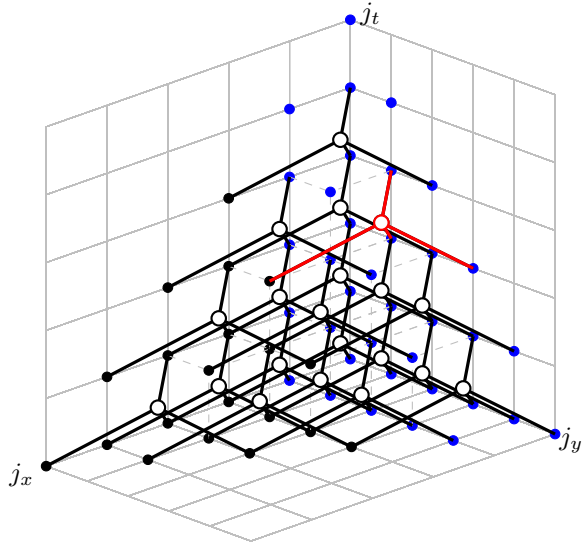

\begin{proposition} \label{PROP::BASIS-DIMENSION-QUASI-TREFFTZ} The set of functions~$\left\{b_J\right\}_{J = 1}^{n_{d + 1,p}}$ defined in~\eqref{DEF::QUASI-TREFFTZ-BASIS} are a basis for the space~$\QTrefftz{p}{K}$. Therefore, 
\begin{align*}
\dim\big(\QTrefftz{p}{K}\big) \!
= \! n_{d+1, p} 
\! & = \! \frac{(p + d - 1)! (2p + d)}{d! p!} \\
& 
\! = \! \mathcal{O}_{p\to\infty}(p^d) \! \ll \! \dim(\IP^p(K)) 
\! =\! \binom{d + 1+ p}{d + 1} \! = \! \mathcal{O}_{p\to\infty}(p^{d+1}).\!
\end{align*}

\end{proposition}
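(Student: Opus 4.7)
The plan is to prove the basis property by showing that the linear map
\[
\Phi: \QTrefftz{p}{K} \longrightarrow \IP^p(\IR^d) \times \IP^{p-1}(\IR^d), \qquad q_p \longmapsto \left(q_p\big(\bx_K^{(1)},\cdot\big),\ \partial_{x_1}q_p\big(\bx_K^{(1)},\cdot\big)\right)
\]
is a linear bijection. Because the restrictions of the $b_J$ are, by construction in \eqref{DEF::QUASI-TREFFTZ-BASIS}, a chosen basis of $\IP^p(\IR^d)\times\IP^{p-1}(\IR^d)$, bijectivity of $\Phi$ will immediately give that $\{b_J\}_{J=1}^{n_{d+1,p}}$ is a basis of $\QTrefftz{p}{K}$, and the dimension formula then follows from the algebraic identity $n_{d+1,p}=r_{d,p}+r_{d,p-1}=\binom{p+d}{d}+\binom{p+d-1}{d}=\frac{(p+d-1)!(2p+d)}{d!\,p!}$.

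For injectivity, I would expand any $q_p \in \QTrefftz{p}{K}$ in the scaled monomial basis centered at $(\bx_K,t_K)$ as in the discussion preceding \eqref{EQN::TAYLOR-COEFFICIENTS-RELATION}, with coefficients $\{C_\bj\}_{|\bj|\le p}$. The two vanishing conditions $q_p(\bx_K^{(1)},\cdot)=0$ and $\partial_{x_1}q_p(\bx_K^{(1)},\cdot)=0$ force $C_\bj=0$ whenever $j_{x_1}\in\{0,1\}$. Then I proceed by induction on $j_{x_1}$: for any multi-index with $j_{x_1}\geq 2$, apply the recurrence \eqref{EQN::TAYLOR-COEFFICIENTS-RELATION} with $\bj$ replaced by $(\bjx-2\boldsymbol{e}_1,j_t)$; the right-hand side involves only coefficients $C_\bz$ with $z_{x_1}\le j_{x_1}-2 < j_{x_1}$, all of which vanish by the inductive hypothesis, so $C_{\bjx,j_t}=0$. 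Hence $q_p\equiv 0$ and $\Phi$ is injective.

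For surjectivity, the same recurrence is used constructively. Given an arbitrary pair $(\widehat{m},\widetilde{m})\in\IP^p(\IR^d)\times\IP^{p-1}(\IR^d)$, I would prescribe the coefficients $C_\bj$ with $j_{x_1}\in\{0,1\}$ by reading them off the expansions of $\widehat{m}$ (for $j_{x_1}=0$) and $\widetilde{m}$ (for $j_{x_1}=1$) in the shifted monomial basis, and then define all the remaining coefficients $C_\bj$ with $j_{x_1}\ge 2$ by iterating \eqref{EQN::TAYLOR-COEFFICIENTS-RELATION} in order of increasing $j_{x_1}$, as illustrated in Figures \ref{FIG::INDEPENDENT-RELATIONS}--\ref{FIG::Relations2+1D}. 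This produces a polynomial in $\IP^p(K)$ that satisfies, by the construction of \eqref{EQN::TAYLOR-COEFFICIENTS-RELATION} out of \eqref{EQN::IDENTITY-DERIVATIVES-SCHRODINGER}, the defining quasi-Trefftz conditions $\DerA{\bj}\calS q_p(\bx_K,t_K)=0$ for all $|\bj|\le p-2$, and whose image under $\Phi$ is $(\widehat{m},\widetilde{m})$ by construction.

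The main technical point I expect to verify carefully is that the constructive induction in the surjectivity argument stays within total degree $p$: when one applies \eqref{EQN::TAYLOR-COEFFICIENTS-RELATION} with $|\bj|=m-2\le p-2$, every coefficient appearing on either side corresponds to a multi-index of total degree at most $m\le p$, so no coefficient of total degree exceeding $p$ is ever demanded and the output is genuinely an element of $\IP^p(K)$. Once these two directions are established the proposition follows, and the asymptotic comparison $n_{d+1,p}=\mathcal{O}(p^d)\ll\binom{p+d+1}{d+1}=\mathcal{O}(p^{d+1})$ is a routine estimate on binomial coefficients.
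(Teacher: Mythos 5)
Your proof is correct and follows essentially the same route as the paper's: both arguments rest on the recurrence \eqref{EQN::TAYLOR-COEFFICIENTS-RELATION} showing that an element of $\QTrefftz{p}{K}$ is uniquely determined by (and can be constructed from) its restriction and the restriction of its $x_1$-derivative at $x_1=\bx_K^{(1)}$. The only difference is presentational: you package this as bijectivity of the map $\Phi$ and spell out the surjectivity (existence of the $b_J$) and the degree bookkeeping explicitly, which the paper delegates to the discussion around \eqref{EQN::TAYLOR-COEFFICIENTS-RELATION} and Figures~\ref{FIG::INDEPENDENT-RELATIONS}--\ref{FIG::Relations2+1D}.
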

\begin{proof}
We first observe that the set of polynomials~$\left\{b_J\right\}_{J = 1}^{n_{d+1,p}}$ is linearly independent due to their restrictions to~$x_1 = \bx_{K}^{(1)}$.
On the other hand, the relations~\eqref{EQN::TAYLOR-COEFFICIENTS-RELATION}, imply that~$q_p$ is uniquely determined by its restriction~$q_p(\bx_K^{(1)}, \cdot)$ and the restriction of its derivative~$\partial_{x_1} q_p(\bx_K^{(1)}, \cdot)$.
In addition, there exist some complex coefficients~$\left\{\lambda_s\right\}_{s = 1}^{n_{d + 1, p}}$ such that
\begin{align*}
    q_p\left(\bx_K^{(1)}, \cdot\right)  
    & = \sum_{s = 1}^{r_{d, p}} \lambda_s \widehat{m}_s(\cdot) 
    = \sum_{s = 1}^{ r_{d, p}} \lambda_s b_s\left(\bx_K^{(1)}, \cdot\right),  \\
    \partial_{x_1} q_p\left(\bx_K^{(1)}, \cdot\right)  
    & = \!\!\sum_{s = r_{d, p} + 1}^{n_{d + 1, p}} \lambda_{s} \widetilde{m}_{s - r_{d, p}}(\cdot) 
    = \!\!\sum_{s = r_{d,p}+1}^{n_{d + 1, p}} \lambda_{s} \partial_{x_1} b_s(\bx_K^{(1)}, \cdot),
\end{align*}
whence
$q_p = \sum_{s = 1}^{n_{d + 1, p} } \lambda_s b_s$,
which completes the proof.
\end{proof}

\begin{remark}[Quasi-Trefftz basis construction: difference between Schr\"odinger and wave equations]
The definition of the basis functions~$b_J$ in~\eqref{DEF::QUASI-TREFFTZ-BASIS} can be modified by fixing the restriction of $b_J$ and its partial derivative~$\partial_{x_\ell} b_J$ to~$x_\ell = \bx_K^{(\ell)}$ for any~$1 \leq \ell \leq d$. However, it is not possible to assign the values for a given time $t=t_K$, as the order of the time derivative appearing in the Schr\"odinger equation is lower than the order of the space derivatives.
How this affects the basis construction is visible from Figure \ref{FIG::INDEPENDENT-RELATIONS}: 
the coefficients (the colored dots) can be computed sequentially when all the other coefficients of a relation (the Y-shaped stencil) are known, so it is possible to reach all dots moving left to right, but not moving bottom to top.
Imposing the values at a given time is possible for the wave equation, as it is done in \cite[\S4.4]{ImbertGerard_Moiola_Stocker}, precisely because in that case time and space derivatives have the same order.
\end{remark}

\begin{remark}[Constant-potential case]
 The space~$\QTrefftz{p}{K}$ does not reduce to a Trefftz space for the case of constant potential~$V$. Nonetheless, the pure Trefftz space~$\bVp{p}(K)$ defined as
\begin{equation*}
        \bVp{p}(K) = \left\{q_p \in \IP^p(K) : \calS q_p = 0\right\},
\end{equation*}
does not possess strong enough approximation properties to guarantee optimal $h$-convergence. In particular, it does not contain the Taylor polynomial of all local solutions to the Schr\"odinger equation;  for~$d = 1$, $p = 1$ and~$V = 0$, $\bVp{p}(K) = span\left\{1, x\right\}$; however, $\psi(x, t) = \exp\left( x + \frac{i}{2}t\right)$ satisfies~$\calS \psi = 0$, and $\Taylor{(0, 0)}{p+1}{\psi} = 1 +  x + \frac{i}{2}t \not\in \bVp{p}(K)$.
\end{remark}

\begin{remark}[Trefftz dimension]
As seen in Proposition~\ref{PROP::BASIS-DIMENSION-QUASI-TREFFTZ}, the quasi-Trefftz polynomial space has considerably lower dimension than the full polynomial space of the same degree.
This ``dimension reduction'' is common to all Trefftz and quasi-Trefftz schemes.
In particular, the dimension~$n_{d + 1,p}$ of~$\QTrefftz{p}{K}$ is equal to the dimension of the space of harmonic polynomials of degree~$\le p$ in~$\mathbb R^{d+1}$, the Trefftz space of complex exponential wave functions for the Schr\"odinger equation with piecewise-constant potential in~\cite{Gomez_Moiola_2022}, 
the Trefftz and quasi-Trefftz polynomial space for the wave equation in~\cite[Eq.~(42)--(43)]{Moiola_Perugia_2018} and \cite{ImbertGerard_Moiola_Stocker}.
\end{remark}


\section{Numerical experiments} 
\label{SECT::NUMERICAL-RESULTS}
\noindent In this section we validate the theoretical results regarding the~$h$-convergence of the proposed method, and numerically assess some additional features such as~$p$-convergence and conditioning. Although we do not report the results here, optimal convegence rates of order~$\ORDER{h^{p+1}}$ are observed for the error in the~$L^2(\QT)$-norm.

We list some aspects regarding our numerical experiments
\begin{itemize}
\item We use Cartesian-product space--time meshes with uniform partitions along each direction, which are a particular case of the situation described in Remark~\ref{REM::TIME-SLABS}.
\item We choose~$(\bx_K, t_K)$ in the definition of the quasi-Trefftz space~$\QTrefftz{p}{K}$ in \eqref{EQN::DEFINITION-QUASI-TREFFTZ} as the center of the element~$K$.
\item In all the experiments we consider Dirichlet boundary conditions.
\item The linear systems are solved using Matlab's backslash command. 
\item The quasi-Trefftz basis functions $\{b_J\}_{J = 1}^{n_{d + 1,p}}$ are constructed by choosing $\widehat m_J$ and $\widetilde m_J$ in \eqref{DEF::QUASI-TREFFTZ-BASIS} as scaled monomials and by computing the remaining coefficients $C_{\mathbf j}$ with the relations \eqref{EQN::TAYLOR-COEFFICIENTS-RELATION}.
\item In the $h$-convergence plots, the numbers in the yellow rectangles are the empirical algebraic convergence rates for the quasi-Trefftz version (continuous lines). The dashed lines correspond to the errors obtained for the full polynomial space.
\end{itemize}

\subsection{\texorpdfstring{$(1+1)$}{(1+1)}-dimensional test cases}
\noindent We first focus on the~$(1+1)$-dimensional case, for which families of explicit solutions are available for some well-known potentials $V$.

\subsubsection{$h$-convergence} 
\noindent In order to validate the error estimates in Theorems~\ref{THM::ERROR-ESTIMATE-FULL-POLYNOMIALS} and~\ref{THM::ERROR-ESTIMATE-QUASI-TREFFTZ}, we consider a series of problems with different potentials $V$.
No significant difference in terms of accuracy between the quasi-Trefftz and the full polynomial versions of the method with the same polynomial degree~$p$ (corresponding to different numbers of DOFs~$n_{d + 1,p}$ and $r_{d+1,p}$, respectively) is observed in all the experiments. 
\paragraph{Harmonic oscillator potential ($V(x) = \frac{\omega^2 x^2}{2}$)} 
    
For this potential, the Schr\"odinger equation~\eqref{EQN::SCHRODINGER-EQUATION} models the situation of a quantum harmonic oscillator for an angular frequency~$\omega > 0$. On~$\QT = (-3, 3) \times (0, 1)$, we consider the following well-known family of solutions (see e.g.,~\cite[Sect. 2.3]{Griffiths_1995}) 
\begin{equation}
        \label{EQN::EXACT-SOLUTION-HARMONIC}
        \psi_n(x, t) = \frac{1}{\sqrt{2^n n!}} \left(\frac{\omega}{\pi}\right)^{1/4}  \mathcal{H}_n\left(\sqrt{\omega} x\right) \exp\left(-\frac{1}{2}\left(\omega x^2 + (2n + 1)i\omega t\right)\right)
        \quad n\in\mathbb N,
\end{equation}
where~$\mathcal{H}_n(\cdot)$ denotes the~$n$-th physicist's Hermite polynomials as defined in~\cite[Table~18.3.1, denoted by~$H_n(\cdot)$]{DLFM_2010}.  
    
In Figure~\ref{FIG::HARMONIC-OSCILLATOR-ERROR}, we present the errors obtained for~$\omega = 10$, $n = 2$ and a sequence of Cartesian meshes with uniform partitions and~$h_x = h_t = 0.05 \times 2^{-i}$, $i = 0, \ldots 4$. Rates of convergence of order~$\ORDER{h^p}$ in the DG norm are observed, as predicted by the error estimate in Theorem~\ref{THM::ERROR-ESTIMATE-QUASI-TREFFTZ}.  
A convergence of at least order~$\ORDER{h^{p+1}}$ is observed for the~$L^2$-error at the final time, which is faster (by a factor~$h$) than the order that can be deduced from the estimates in Theorems~\ref{THM::ERROR-ESTIMATE-FULL-POLYNOMIALS} and~\ref{THM::ERROR-ESTIMATE-QUASI-TREFFTZ}.  We have also included the plots for the error decay with respect to the total number of degrees of freedom, where the same~$h$-convergence rates are observed for both versions of the method (see also the~$p$-convergence plot in
Figure~\ref{FIG::P-ERROR-HO} for a clearer understanding of the dependence of the error
on~$p$).
        \begin{figure}[htb] 
    \centering 
    \subfloat[Energy error evolution]{\includegraphics[width = .45\textwidth]{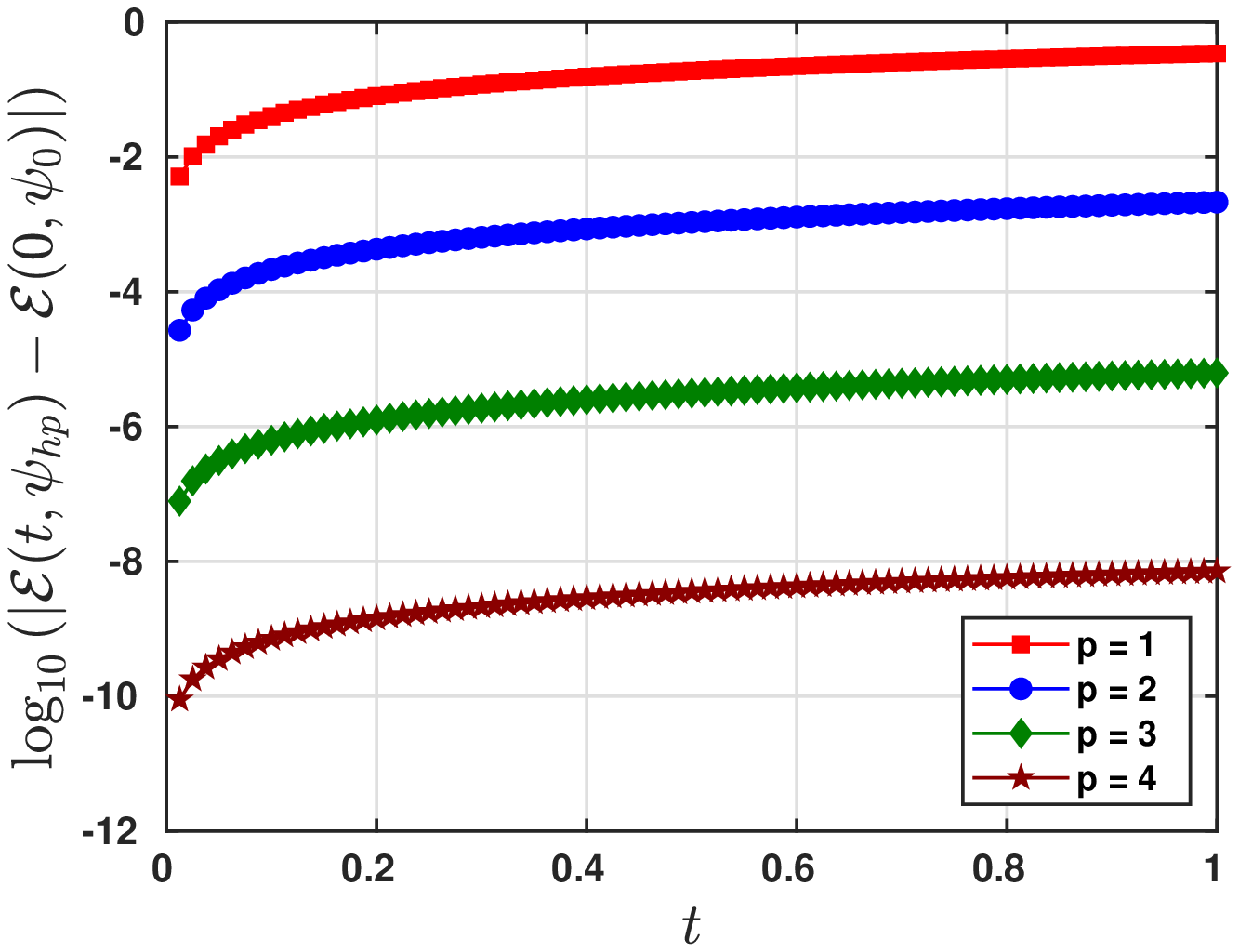}}
    \hspace{0.1in}
    \subfloat[Energy loss at $T = 1$]{\includegraphics[width = .45\textwidth]{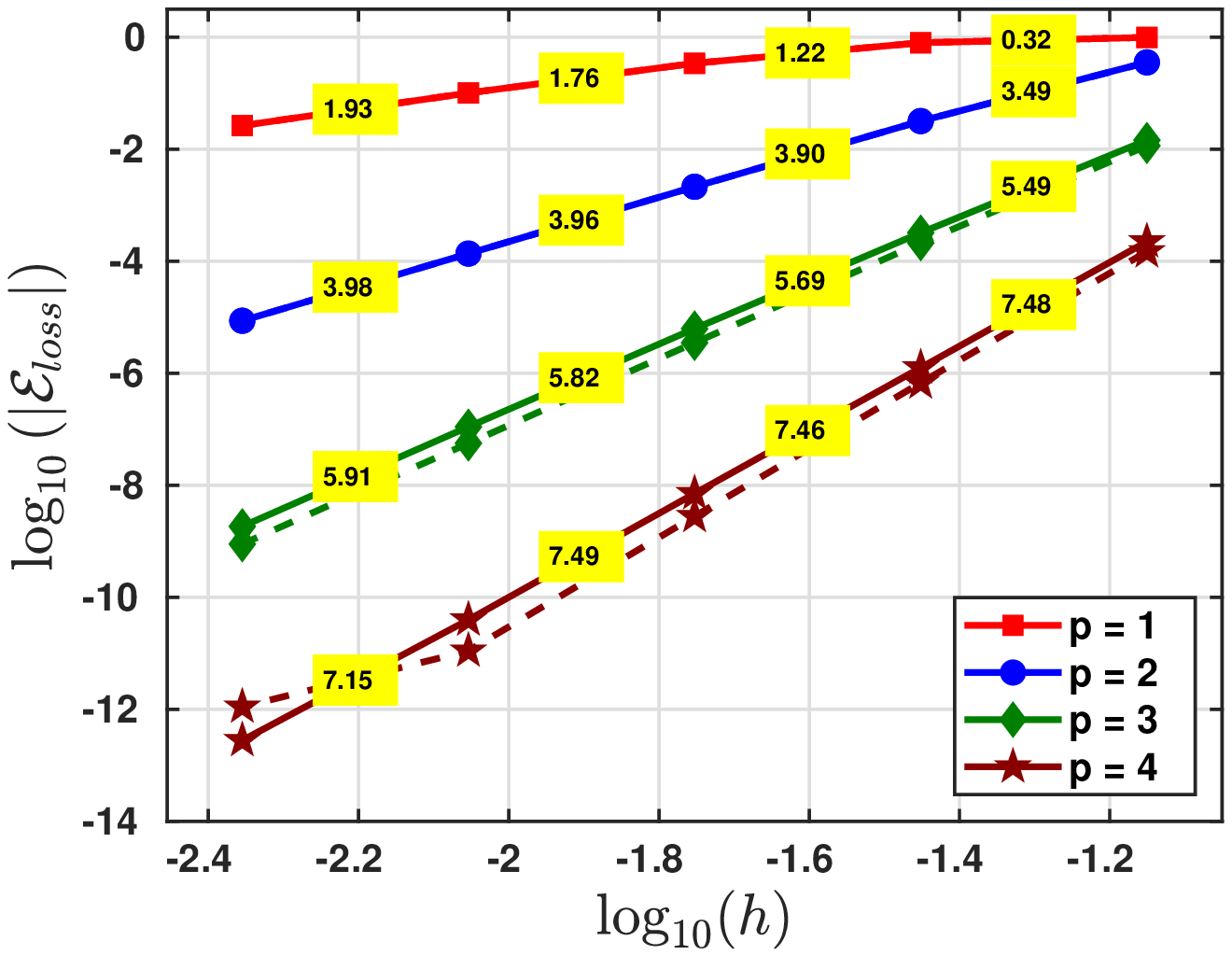}}
    \caption{Time-evolution of the energy error for the 
 quantum harmonic oscillator problem with potential~$\left(V(x) = 50 x^2\right)$ and exact solution~$\psi_2$ in \eqref{EQN::EXACT-SOLUTION-HARMONIC}. \label{FIG::HARMONIC-OSCILLATOR-ENERGY}}
    \end{figure}

Due to the fast decay of the exact solution close to the boundary (see Figure~\ref{FIG::PLOT} (panel a), the energy is expected to be preserved. In Figure~\ref{FIG::HARMONIC-OSCILLATOR-ENERGY}, we show the evolution of the energy error, and the convergence of the energy loss~$\mathcal{E}_{loss}$ to zero for the quasi-Trefftz version.
In the latter, rates of order~$\ORDER{h^{2p}}$ are observed, which follows from Remark~\ref{Rem:Energy} and the error estimates in Theorems~\ref{THM::ERROR-ESTIMATE-FULL-POLYNOMIALS} and~\ref{THM::ERROR-ESTIMATE-QUASI-TREFFTZ}. 
    
    \begin{figure}[htb]
    \centering
    \subfloat[DG error]{\includegraphics[width = .45\textwidth]{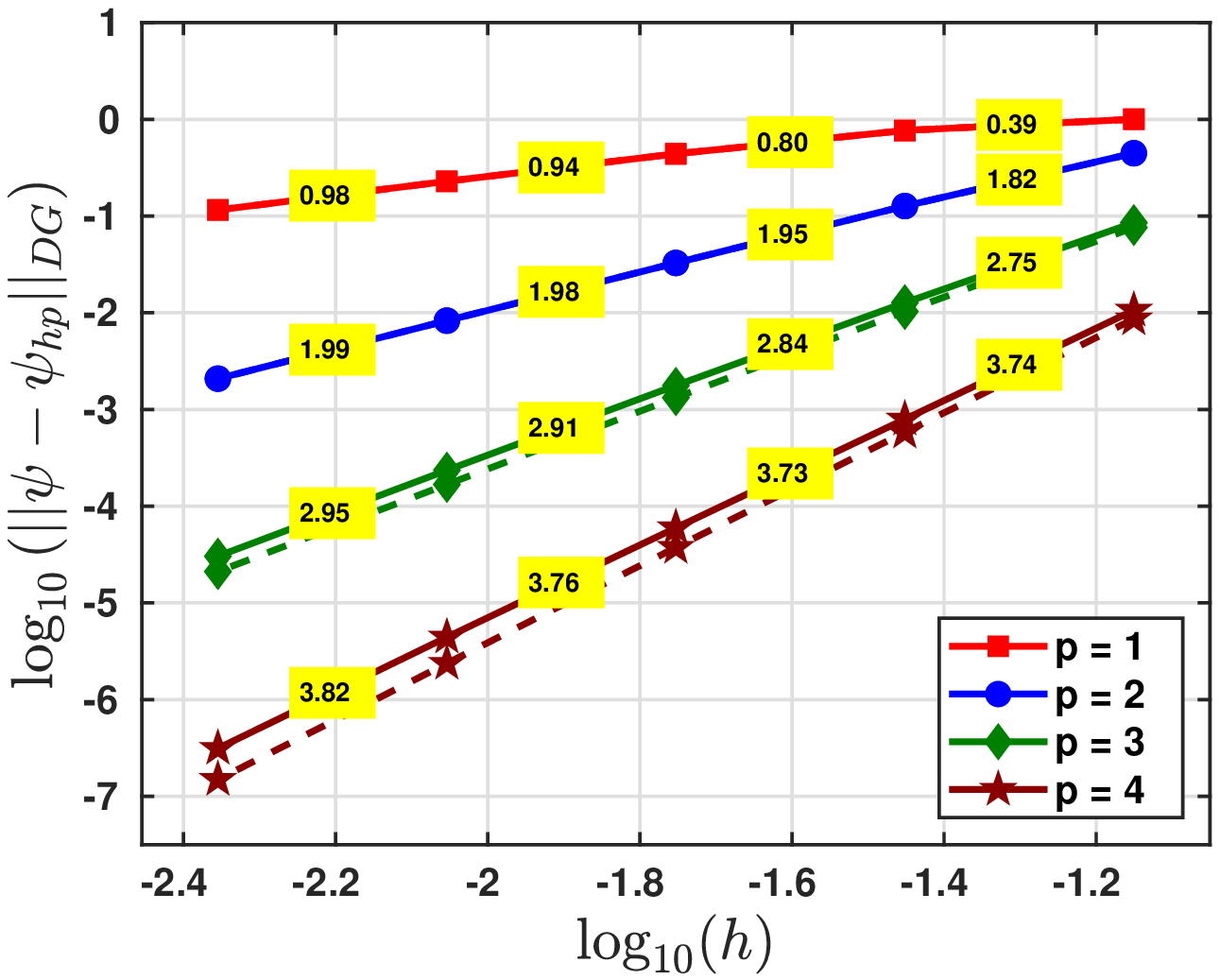}}
    \hspace{0.1in}
    \subfloat[$L^2$-error at~$T = 1$]{\includegraphics[width = .45\textwidth]{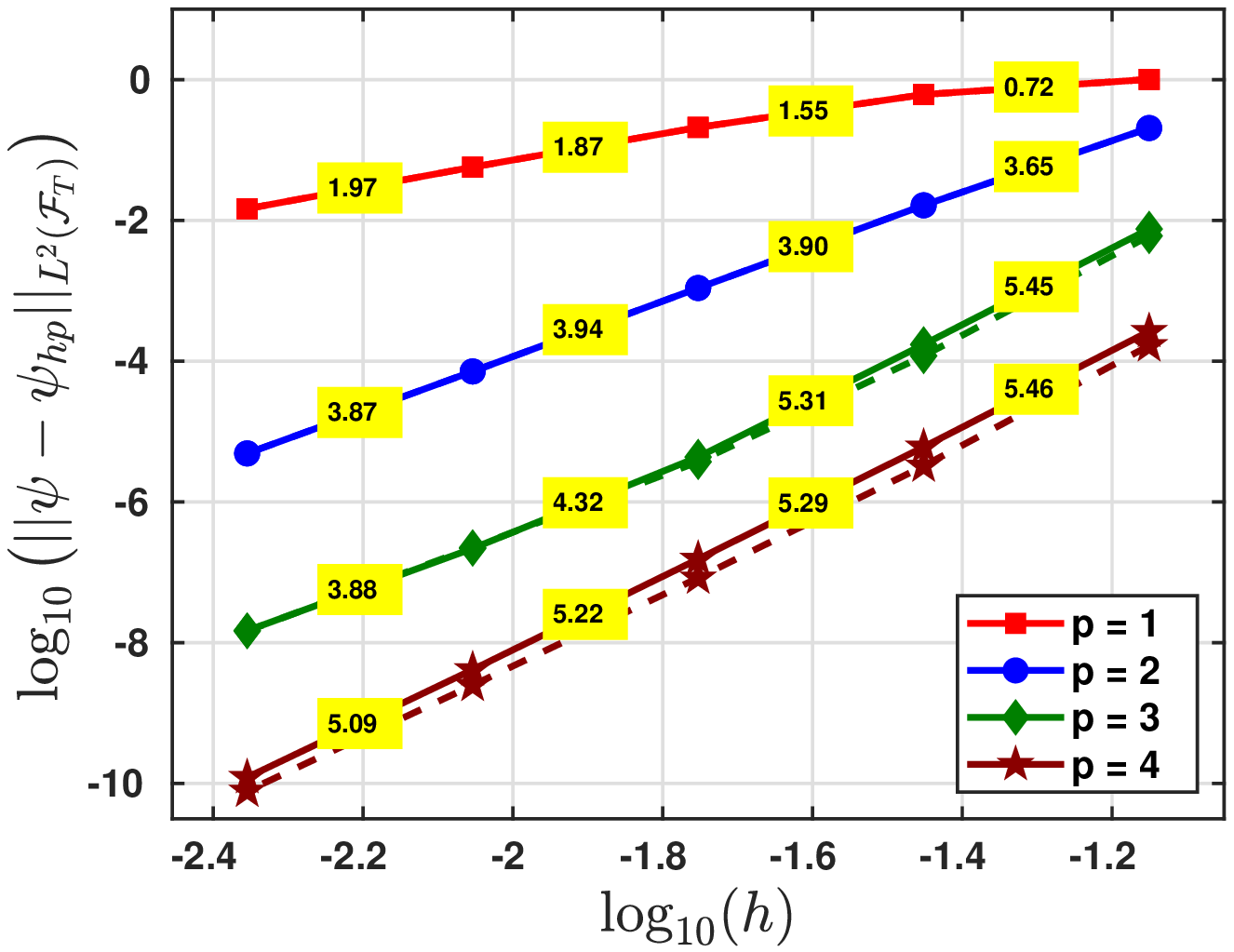}}\\
	\subfloat[DG error]{\includegraphics[width = .45\textwidth]{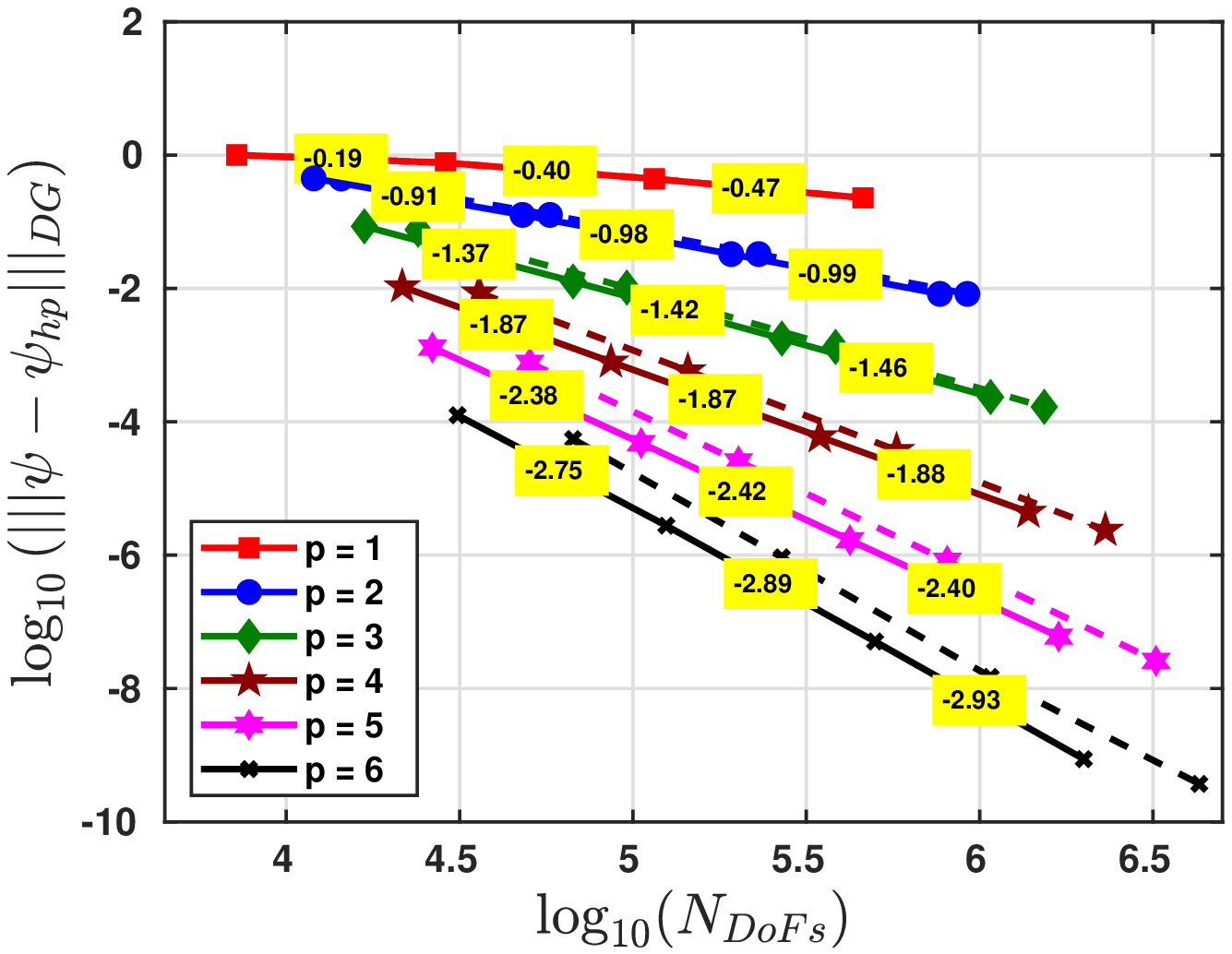}}
    \hspace{0.1in}
    \subfloat[$L^2$-error at~$T = 1$]{\includegraphics[width = .45\textwidth]{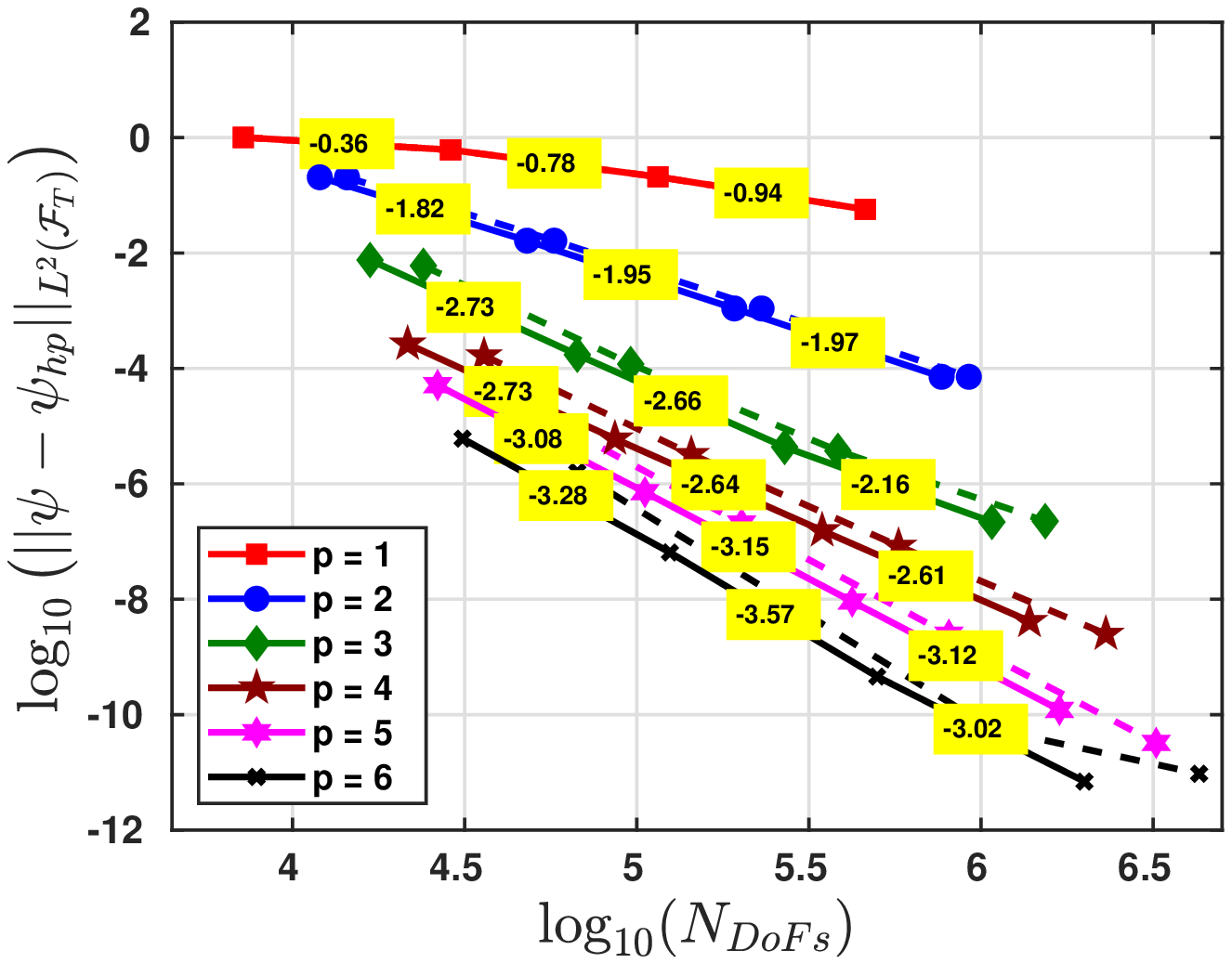}}  
    \caption{$h$-convergence for the~$(1+1)$ quantum harmonic oscillator problem with potential~$\left(V(x) = 50 x^2\right)$  and exact solution~$\psi_2$ in~\eqref{EQN::EXACT-SOLUTION-HARMONIC}. Convergence with respect to the mesh size~$h$ (top panels) and the total number of degrees of freedom (bottom panels). \label{FIG::HARMONIC-OSCILLATOR-ERROR}} 
    \end{figure}
\paragraph{Reflectionless potential ($V(x) = -a^2\mathrm{sech}^2(ax)$)} 
    
This potential was studied in \cite{Crandall_Litt_1983} as an example of a reflectionless potential. On the space--time domain $\QT = (-5, 5) \times (0, 1)$, we consider the Schr\"odinger equation with exact solution (see \cite[Problem 2.48]{Griffiths_1995})
    \begin{equation}
        \label{EQN::EXACT-SOLUTION-SECH}
        \psi(x, t) = \left(\frac{\sqrt{2}i -  a \tanh(ax)}{\sqrt{2}i + a}\right) \exp\left(i\left(\sqrt{2}x - t\right)\right).
    \end{equation}
    
In Figure~\ref{FIG::SECH-ERROR}, we show the errors obtained for a sequence of meshes with~$h_x = 2h_t = 0.2 \times 2^{-i},\ i = 0, \ldots, 4,$ and~$a = 1$. As in the previous experiment, rates of convergence of order~$\ORDER{h^p}$ and~$\ORDER{h^{p + 1}}$ are observed in the DG norm and the~$L^2$ norm at the final time, respectively. The real part of the exact solution is depicted in Figure~\ref{FIG::PLOT} (panel b).
    
\begin{figure}[!ht]
    \centering
    \subfloat[DG error]{\includegraphics[width = .45\textwidth]{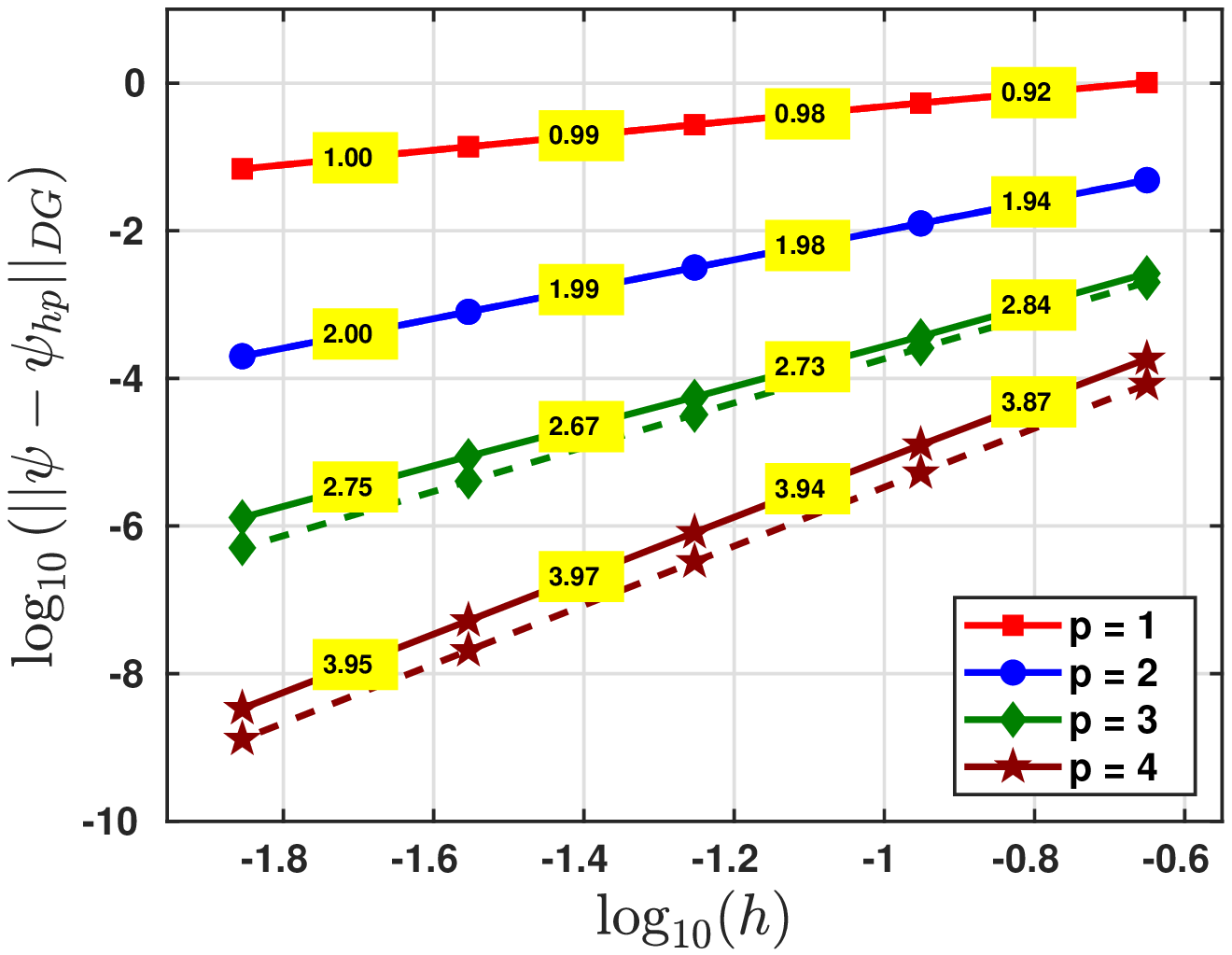}}
    \hspace{0.1in}
    \subfloat[$L^2$ error at $T = 1$]{\includegraphics[width = .45\textwidth]{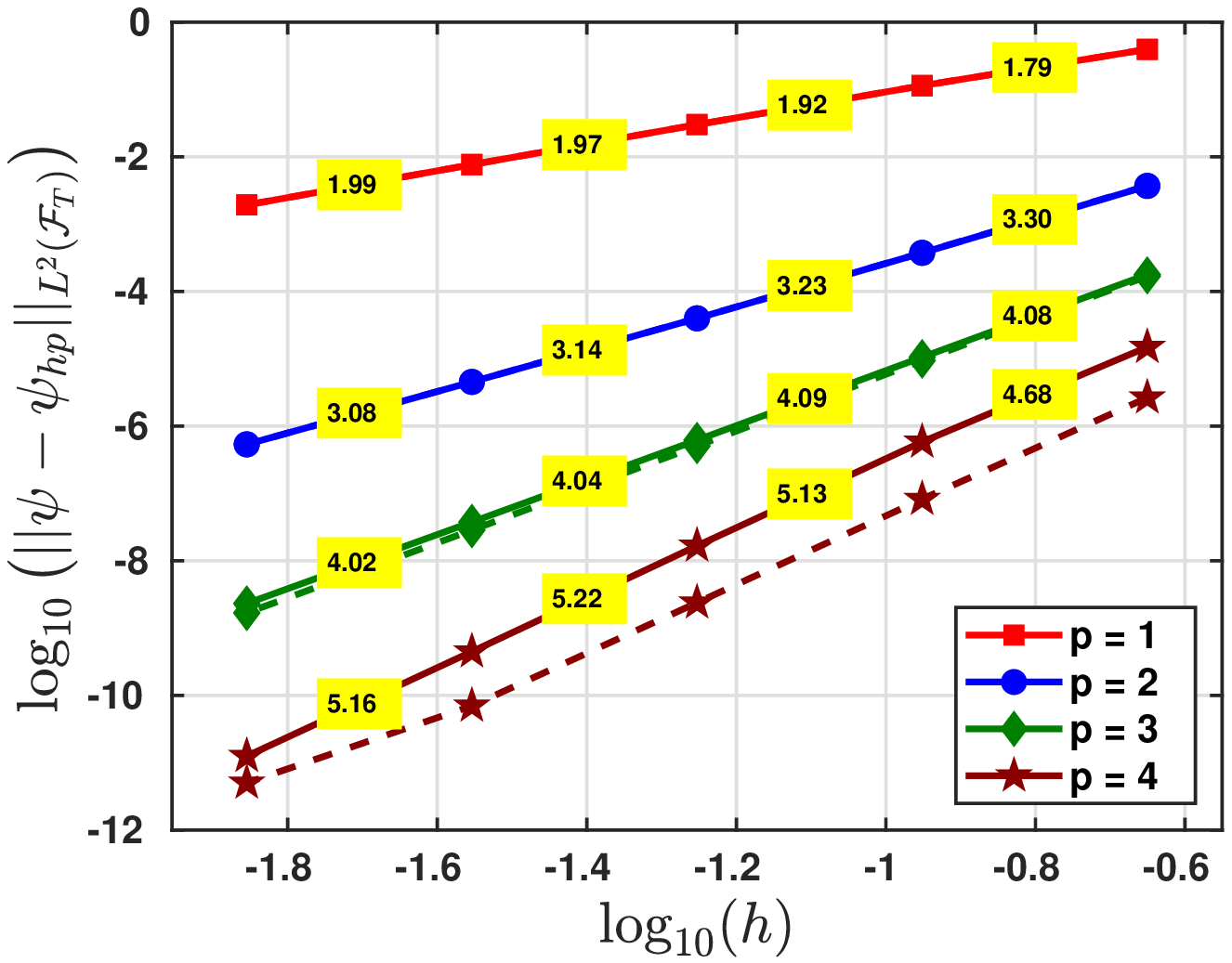}}
    \caption{$h$-convergence for the $(1+1)$ problem with potential $V(x) = -\text{sech}^2(x)$ and exact solution \eqref{EQN::EXACT-SOLUTION-SECH}. \label{FIG::SECH-ERROR}}
    \end{figure}

\paragraph{Morse potential ($V(x) = D (1 - \mathrm{e}^{-\alpha x})^2$)} 
    
This potential was introduced by Morse in~\cite{Morse_1929} to obtain a quantum-mechanical energy level spectrum of a vibrating, non-rotating diatomic molecule. There, the following family of solutions was presented (see also~\cite{Dahl_Springborg_1988})
\begin{equation}
\begin{split}
    \label{EQN::EXACT-SOLUTION-MORSE}
    \psi_{\lambda, n}(x, t) = & N(\lambda, n) \xi(x)^{\lambda - n - 1/2} \mathbb{L}_n^{(2\lambda - 2n - 1)}(\xi(x))\\
    & \times \exp\left(-\frac{\xi(x)}{2} - it\left\lfloor(n + 1/2) - \frac{1}{2 \lambda} (n + 1/2)^2 \right\rfloor \omega_o\right),
\end{split}    
\end{equation}
where~$\lfloor \cdot \rfloor$ is the floor function, $n = 0, \ldots, \lfloor\lambda - 1/2 \rfloor$, $\mathbb{L}_n^{(\alpha)}$ denote the general associated Laguerre polynomials as defined in~\cite[Table~18.3.1]{DLFM_2010}
and 
\begin{equation*}
    N(\lambda, n) = \left\lfloor\frac{(2\lambda - 2n - 1) \Gamma(n + 1)}{\Gamma(2\lambda - n)}\right\rfloor^{\frac12}, \ \lambda = \frac{\sqrt{2D}}{\alpha}, \ \xi(x) = 2\lambda \exp(-\alpha x), \ \omega_o = \sqrt{2 D} \alpha.
\end{equation*}

In Figure~\ref{FIG::MORSE-ERROR}, we show the errors obtained for the Morse potential problem with~$D = 8$, $\alpha = 4$ and exact solution~$\psi_{1, 1}$ on the space--time domain~$\QT = (-0.5, 1.5) \times (0, 1)$ for a sequence of meshes with~$h_x = h_t = 0.1 \times 2^{-i},\ i = 0, \ldots, 4$. The observed rates of convergence are in agreement with those obtained in the previous experiments. The real part of the exact solution is depicted in Figure~\ref{FIG::PLOT} (panel c).

\begin{figure}[!ht]
    \centering
    \subfloat[DG error]{\includegraphics[width = .45\textwidth]{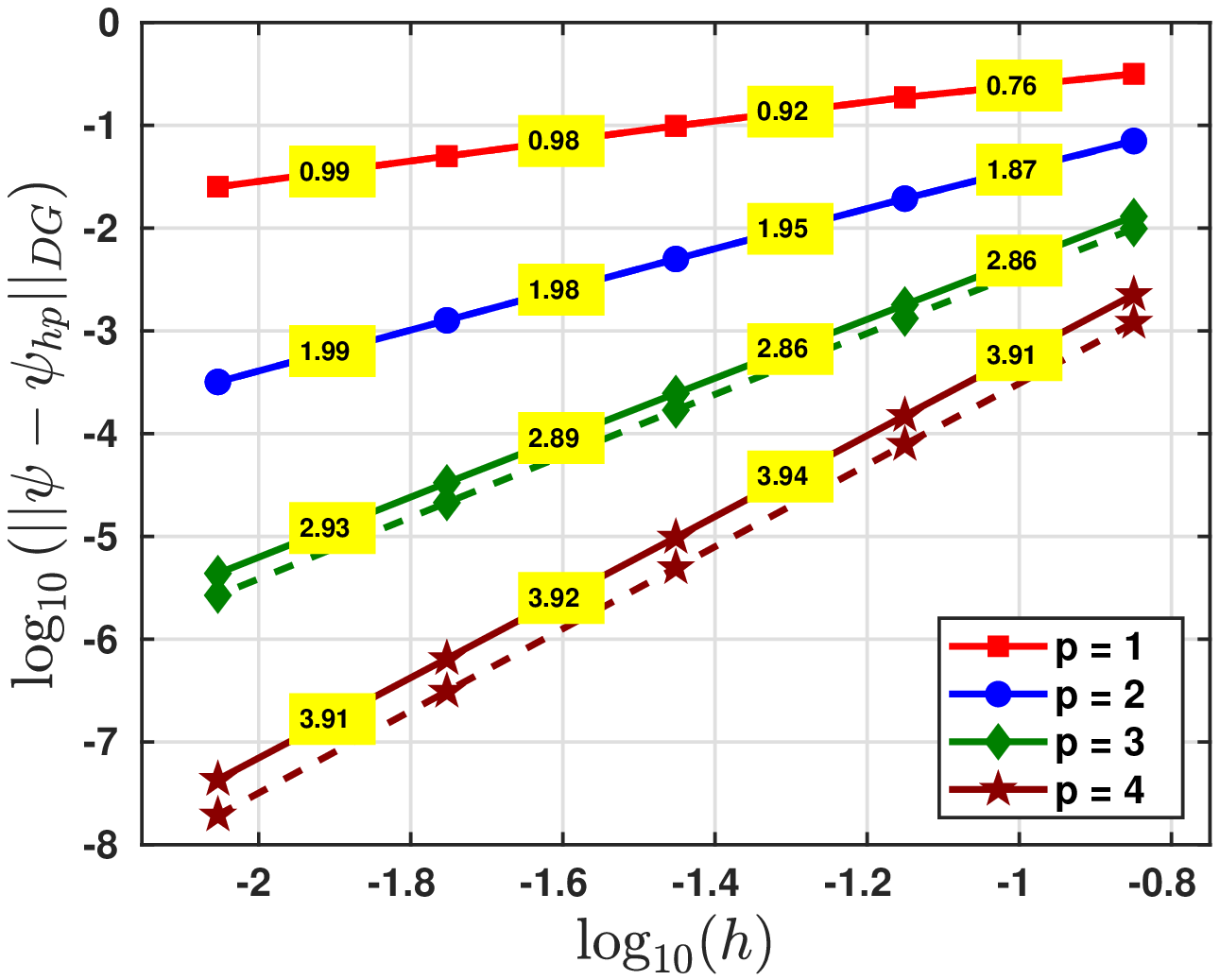}}
    \hspace{0.1in}
    \subfloat[$L^2$ error at $T = 1$]{\includegraphics[width = .45\textwidth]{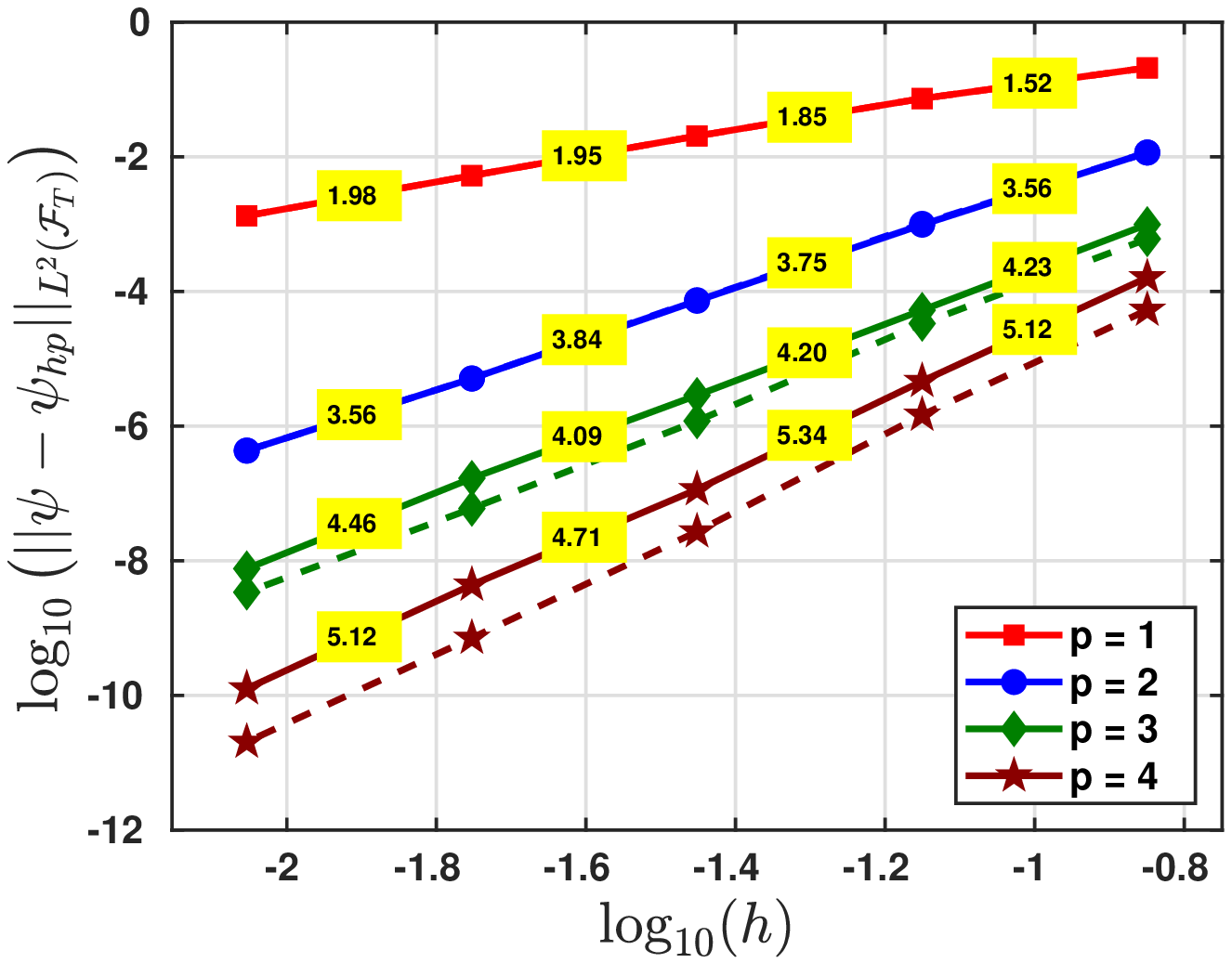}}
    \caption{$h$-convergence for the~$(1+1)$-dimensional problem with Morse potential~$V(x) = D(1 - \exp(-\alpha x))^2$ for~$D = 8$ and~$\alpha = 4$ with exact solution~\eqref{EQN::EXACT-SOLUTION-MORSE}. \label{FIG::MORSE-ERROR}}
\end{figure}

\paragraph{Square-well potential} 
We now consider a problem taken from~\cite{Gomez_Moiola_2022}, whose exact solution is not globally smooth. 
On the space--time domain $\QT = (-\sqrt{2}, \sqrt{2}) \times (0, 1)$, we consider the Schr\"odinger equation with 
homogeneous Dirichlet boundary conditions and the following square-well potential
\begin{equation}
\label{EQN::SQUARE-WELL}
V(x) = \left\{  
\begin{tabular}{ll}
	$0$ & $x\in (-1, 1),$\\
	$V_*$ & $x \in (-\sqrt{2}, \sqrt{2})\ \setminus\ (-1, 1),$
\end{tabular}
\right.	
\end{equation} 
for some fixed $V_* > 0$.
The initial condition is taken as an eigenfunction (bound state) of 
$-\frac12 \partial_x^2+V$ on $(-\sqrt{2}, \sqrt{2})$:
\begin{equation*}
\psi_0(x) = \left\{
\begin{tabular}{ll}
$\cos\left(k_*\sqrt{2} x\right)$ & $x \in (-1, 1),$ \\
$\frac{\cos(k_*)}{\sinh(\sqrt{V_* - k_*^2})} \sinh\big(\sqrt{V_* - k_*^2}(2 - \sqrt{2} |x|)\big)$ & $x \in (-\sqrt{2}, \sqrt{2}) \ \setminus\ (-1, 1)$,
\end{tabular}
\right.
\end{equation*}
where $k_*$ is a real root of the function $f( k) := \sqrt{V_* - k^2} - 
k\tan( k) \tanh(\sqrt{V_* - k^2} )$.
The solution of the corresponding initial boundary value problem 
\eqref{EQN::SCHRODINGER-EQUATION} is~$\psi(x, t) = \psi_0(x)\exp(-ik^2 t)$ and belongs to the space~$H^{p+1}(\Th)\cap \EFC{\infty}{I;\EFC{1}{\Omega}}\backslash \EFC{\infty}{I;\EFC{2}{\Omega}}$ for all~$p \in \IN$, provided that~$\Th$ is aligned with the discontinuities of the
potential~$V$; therefore, Theorems~\ref{THM::ERROR-ESTIMATE-FULL-POLYNOMIALS} and~\ref{THM::ERROR-ESTIMATE-QUASI-TREFFTZ} apply.
Among the finite set of values~$k_*$ for a given~$V_*$, in this experiment we take the largest one, corresponding to faster oscillations in space and time. 

In Figure \ref{FIG::SQUARE-WELL}, we show the errors obtained for $V_* = 20\ (k_* \approx 3.73188)$ and a sequence of meshes with $h_t = \sqrt{2} h_x = 0.1 \times 2^{-i}, \ i = 0, \ldots, 4$. Optimal convergence in both norms is observed for the errors of the quasi-Trefftz version of the method. 

\begin{figure}[!ht]
    \centering
    \subfloat[DG error]{
        \includegraphics[width = .45\textwidth]{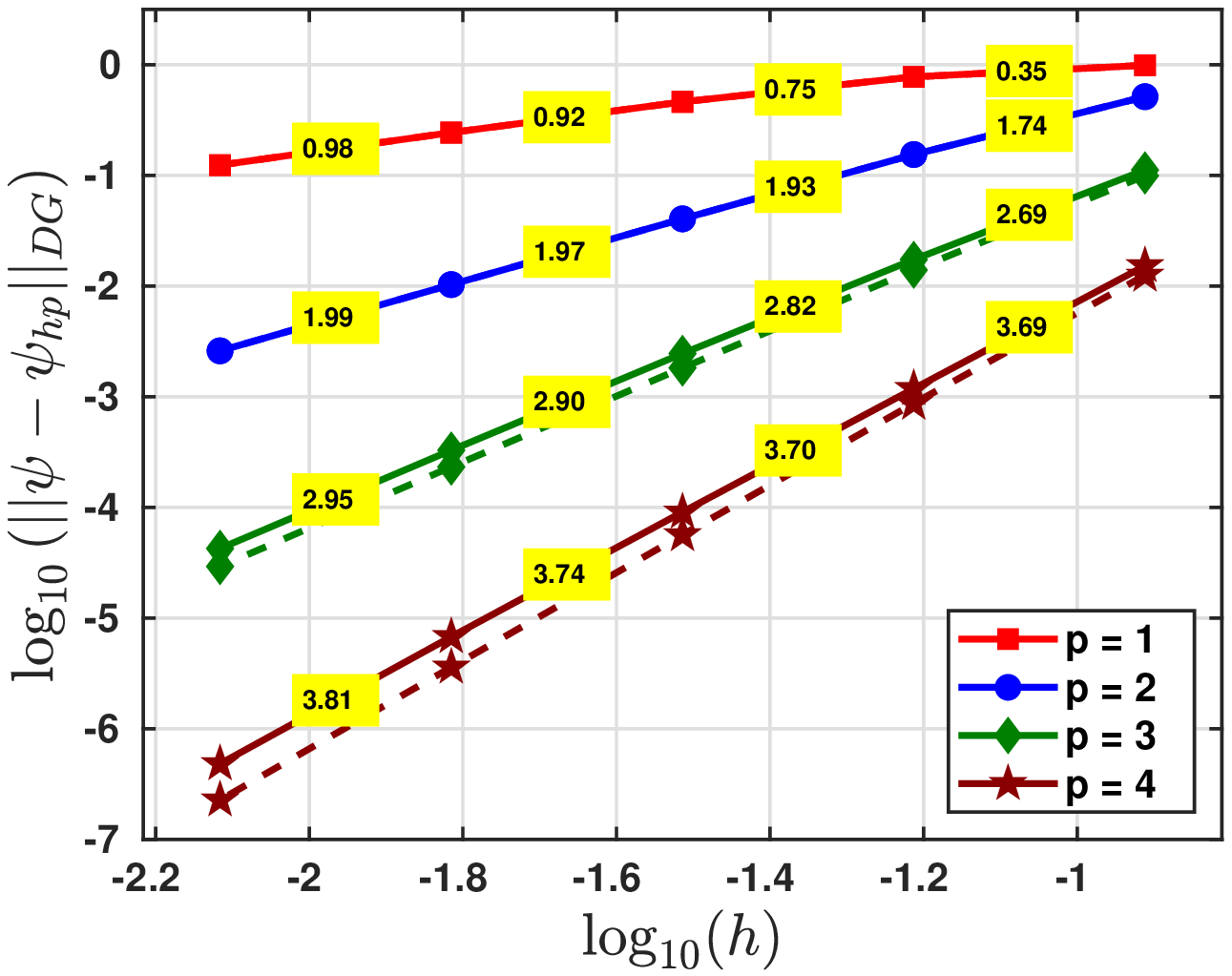}
    }
    \hspace{0.1in}
    \subfloat[$L^2$-error at $T = 1$]{
        \includegraphics[width = .45\textwidth]{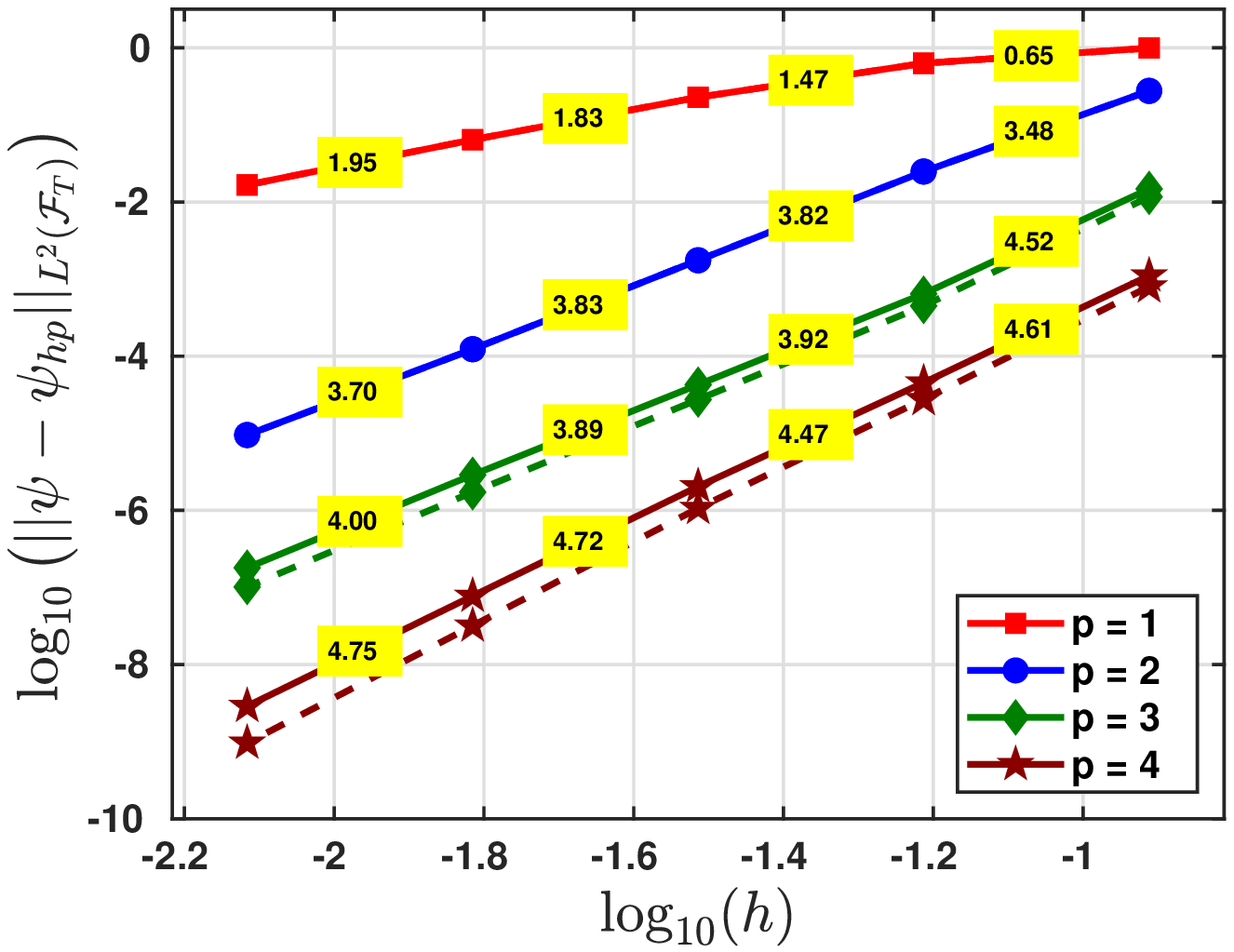}
    }
    \caption{$h$-convergence for the~$(1+1)$-dimensional problem with the square-well potential~$V(x)$ in \eqref{EQN::SQUARE-WELL}. \label{FIG::SQUARE-WELL}}
\end{figure}
    
\begin{figure}[!ht]
    \centering
    \subfloat[$V(x) = 50 x^2$]{
        \includegraphics[width = .45\textwidth]{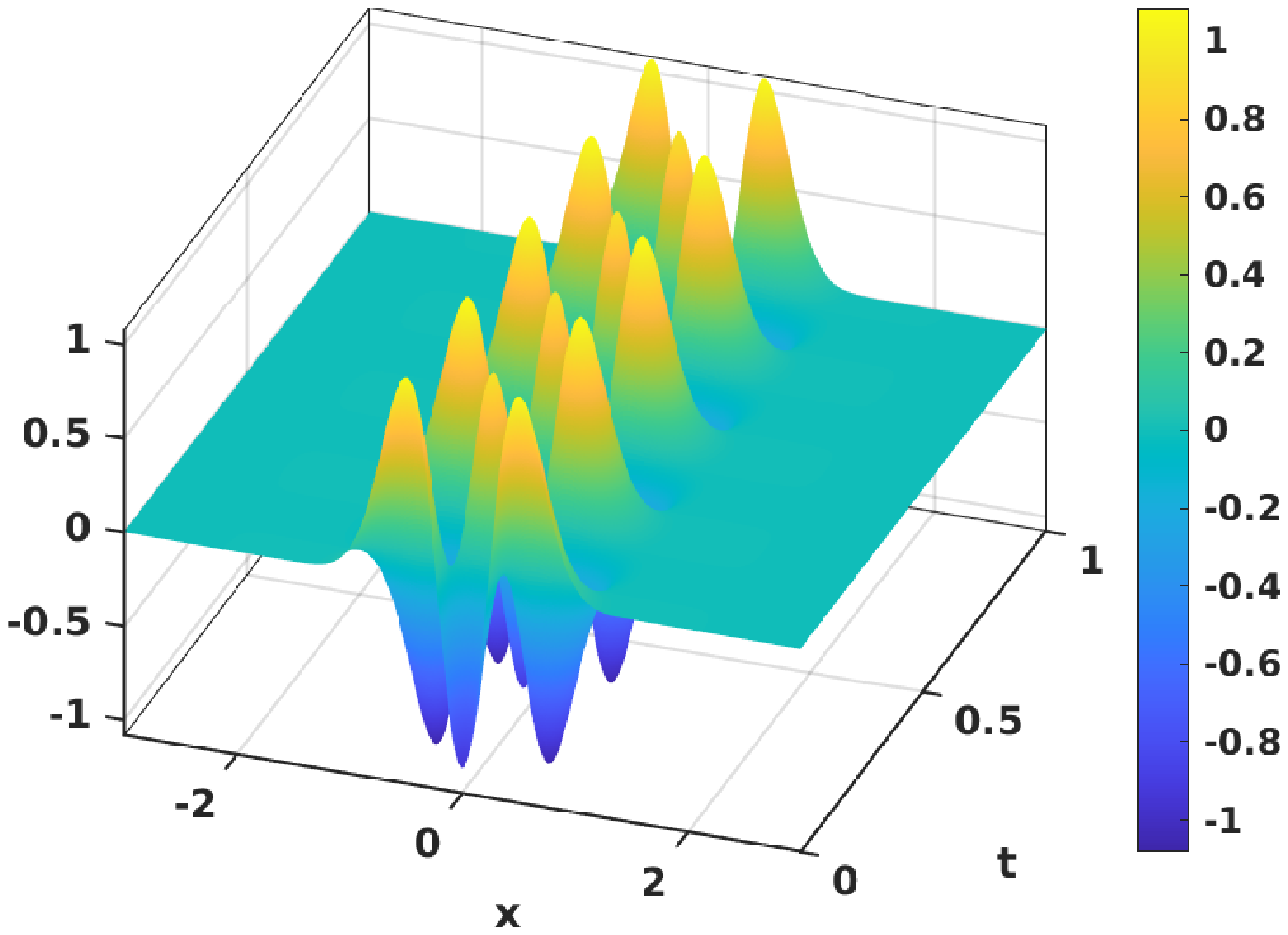}
    }
    \subfloat[$V(x) = -\text{sech}^2(x)$]{
        \includegraphics[width = .45\textwidth]{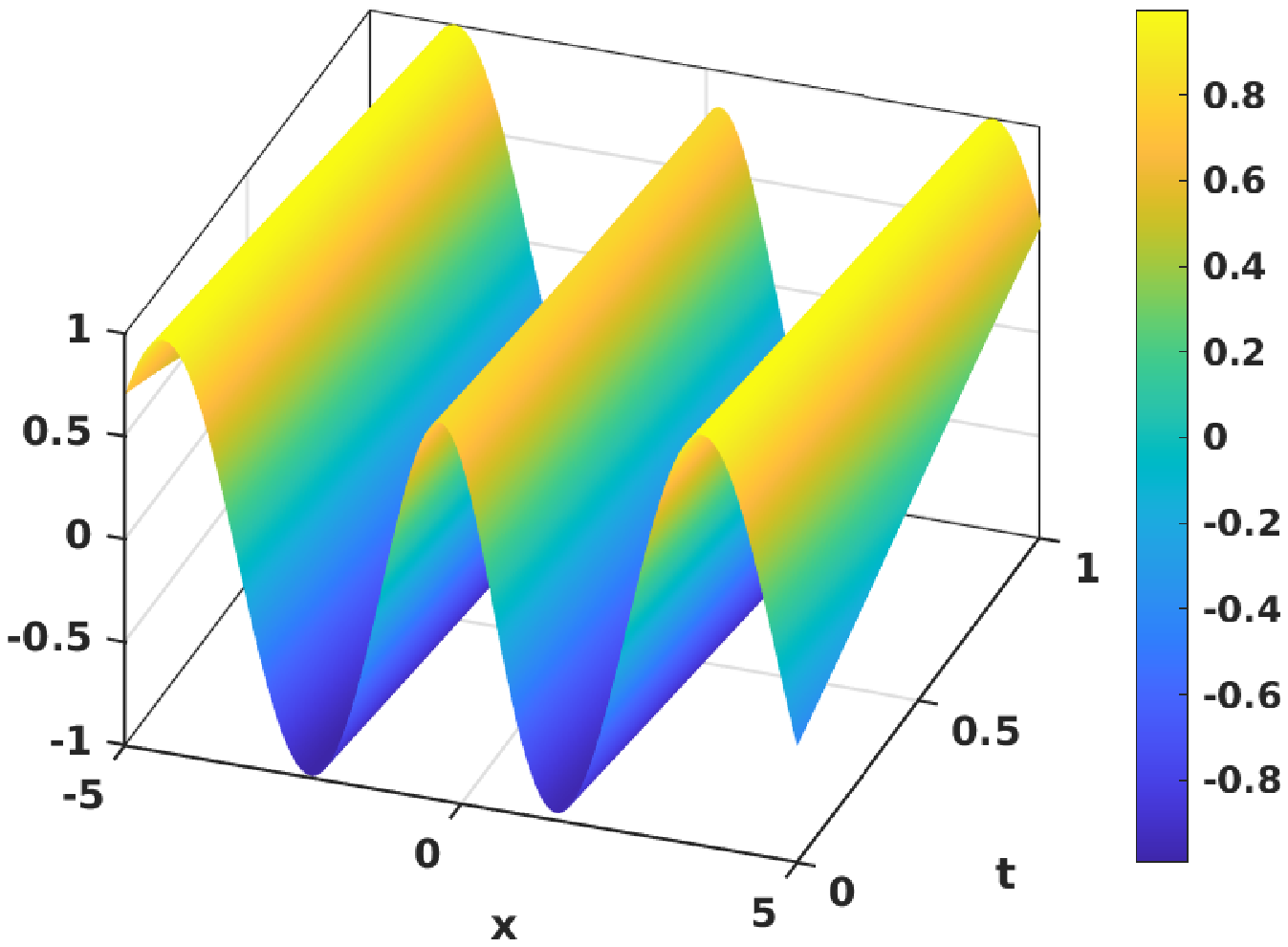}
    }\\
    \subfloat[$V(x) = 8\left(1 - \exp(-4 x)\right)^2$]{
        \includegraphics[width = .45\textwidth]{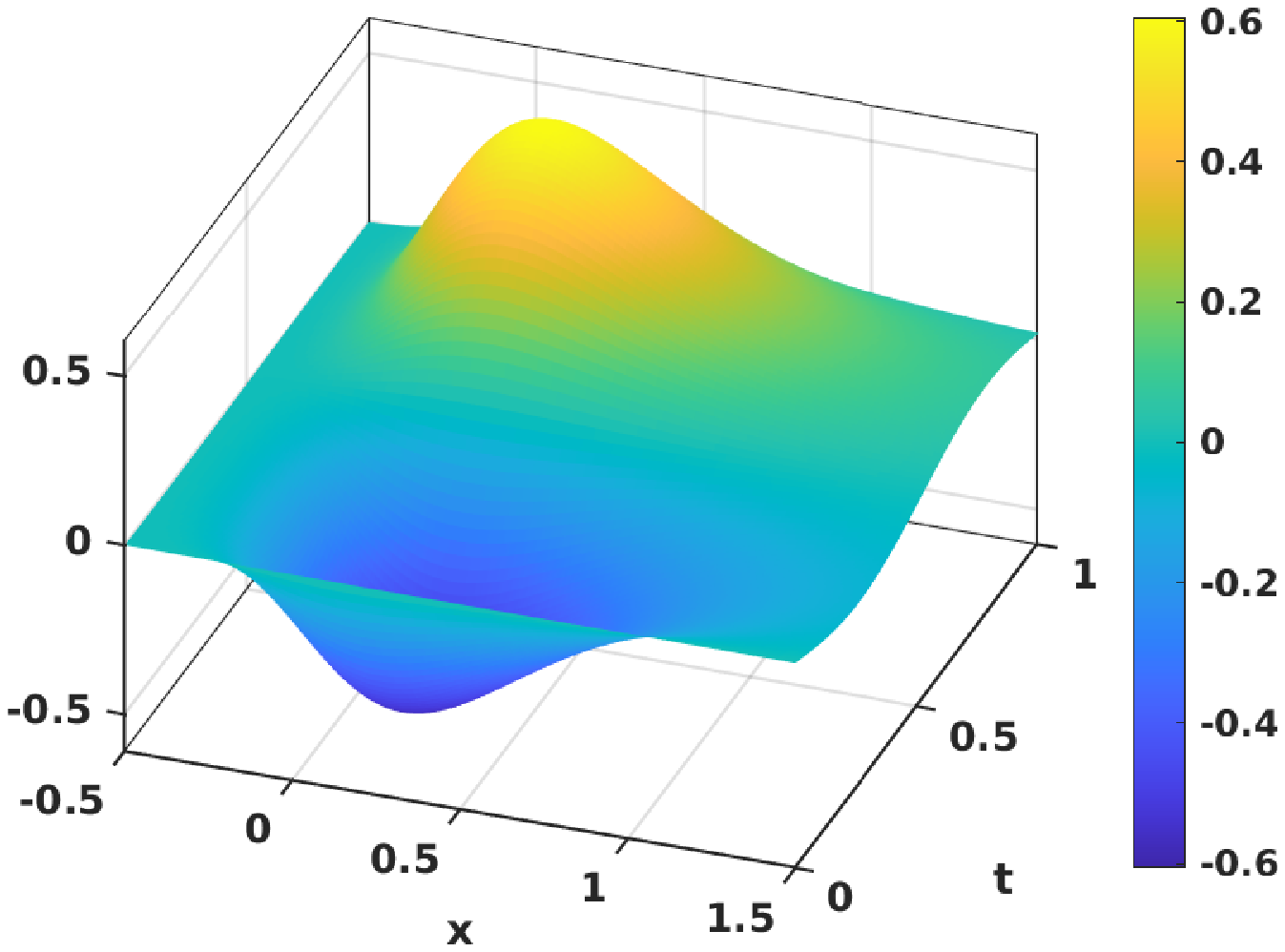}
    }
    \subfloat[Square-well potential ($V_* = 20$)]{
        \includegraphics[width = .45\textwidth]{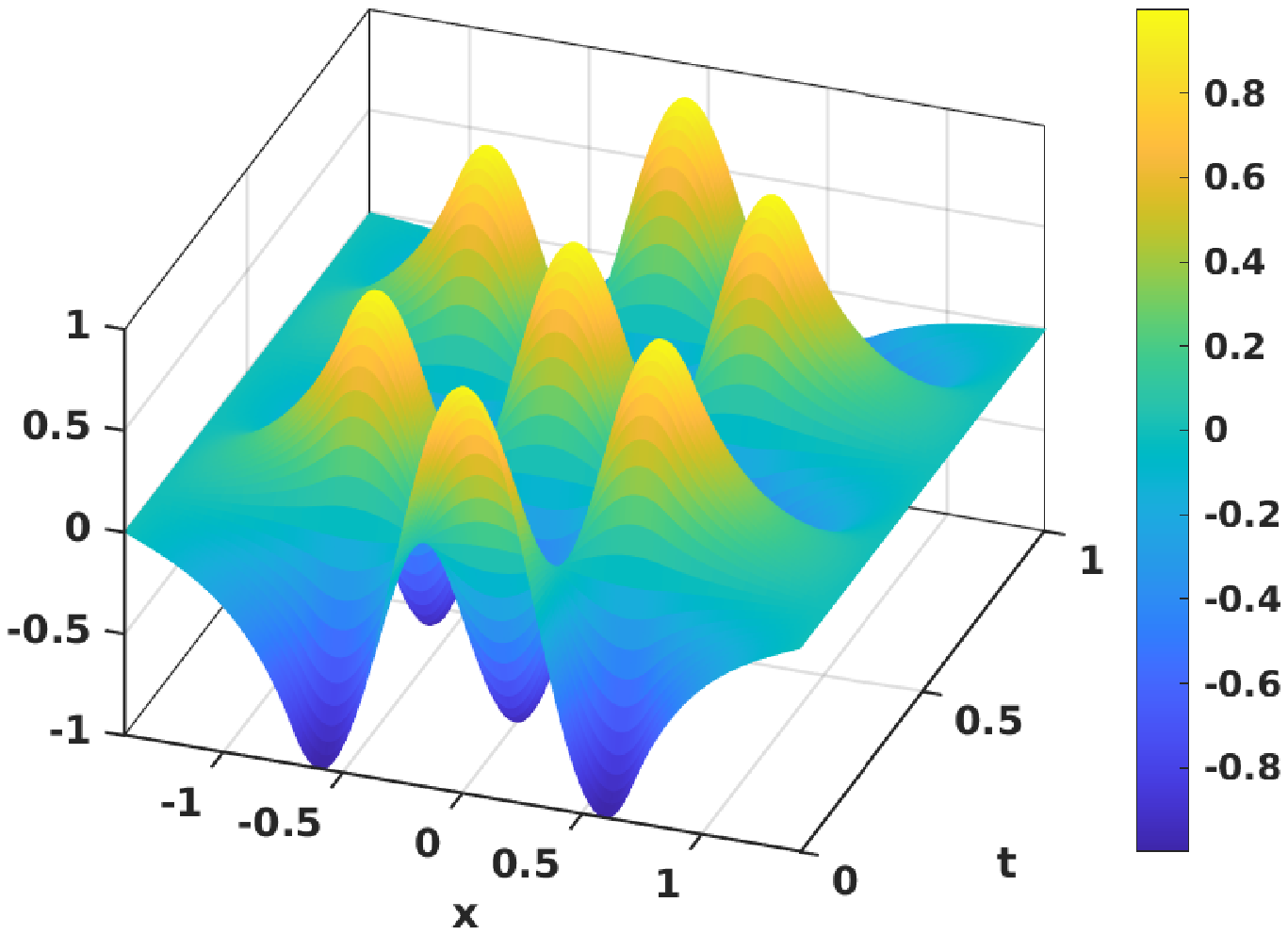}
    }
    \caption{Real part of the exact solutions for the~$(1+1)$-dimensional problems. \label{FIG::PLOT}}
\end{figure}

\subsubsection{Effect of stabilization and volume penalty terms \label{SECT::STABILIZATION}}

In this experiment we are interested in the effect of neglecting some of the terms in the variational formulation~\eqref{EQN::VARIATIONAL-DG}. To do so, we consider the~$(1 + 1)$-dimensional quantum harmonic oscillator problem with exact solution~\eqref{EQN::EXACT-SOLUTION-HARMONIC}. 
In Tables~\ref{TAB::STABILIZATION-HARMONIC-QT-1}--\ref{TAB::STABILIZATION-HARMONIC-QT-2} (quasi-Trefftz space) and~\ref{TAB::STABILIZATION-HARMONIC-FULL-1}--\ref{TAB::STABILIZATION-HARMONIC-FULL-2} (full polynomial space) we present the errors in the DG-norm obtained for the same sequence of meshes and approximation degrees as in the previous section, for different combinations of the stabilization terms~$\alpha, \beta$ and the volume penalty parameter~$\mu$. 
Although the proof of well-posedness of the method \eqref{EQN::VARIATIONAL-DG} relies on the assumption that $\alpha, \beta$ and $\mu$ are strictly positive, in our numerical experiments, the matrices of the arising linear systems are non-singular and optimal convergence rates are observed even when all these parameters are set to zero. Moreover, the errors obtained when~$\alpha = 0$ or~$\beta = 0$ are smaller as some terms in the definition \eqref{EQN::DG-NORM} of~$\Norm{\cdot}{\DG}$ vanish, while the presence of~$\mu$ seems to have just a mild effect in the results. Not shown here, similar effects were observed for the error in the~$L^2(\FT)$-norm.

\begin{table}[!ht]
\centering
\begin{tabular}{ccccccccc}
\hline
\multicolumn{9}{c}{$\mu = \max\{\hKt, \hKx\} $} \\
\hline 
$h$ & \multicolumn{2}{c}{$\alpha = \frac1{h_{F_\bx}}, \beta = h_{F_\bx}$} & \multicolumn{2}{c}{$\alpha = 0, \beta = 0$} & \multicolumn{2}{c}{$\alpha = \frac1{h_{F_\bx}}, \beta = 0$} & \multicolumn{2}{c}{$\alpha = 0, \beta = h_{F_\bx}$} \\
\hline
& DG error & Rate & DG error & Rate & DG error & Rate & DG error & Rate \\
\hline
\hline
\multicolumn{9}{c}{$p = 1$} \\
\hline
\hline
7.07e-02 & 1.00e+00 & \bf{---} & 9.81e-01 & \bf{---} & 1.01e+00 & \bf{---} & 1.00e+00 & \bf{---}\\
3.54e-02 & 7.67e-01 & \bf{0.39} & 4.76e-01 & \bf{1.04} & 6.72e-01 & \bf{0.58}  & 6.53e-01 & \bf{0.62}  \\
1.77e-02 & 4.40e-01 & \bf{0.80} & 2.14e-01 & \bf{1.15} & 3.62e-01 & \bf{0.89}  & 3.40e-01 & \bf{0.94}  \\
8.84e-03 & 2.29e-01 & \bf{0.94} & 1.01e-01 & \bf{1.08}  & 1.85e-01 & \bf{0.97} & 1.70e-01 & \bf{1.00}  \\
4.42e-03 & 1.16e-01 & \bf{0.98} & 4.96e-02 & \bf{1.03} & 9.31e-02 & \bf{0.99}  & 8.49e-02 & \bf{1.00}  \\
\hline
\hline
\multicolumn{9}{c}{$p = 2$} \\
\hline
\hline
7.07e-02 & 4.47e-01 & \bf{---} & 2.59e-01 & \bf{---} & 2.99e-01 & \bf{---} & 4.37e-01 & \bf{---} \\
3.54e-02 & 1.27e-01 & \bf{1.82} & 6.90e-02 & \bf{1.91} & 8.24e-02 & \bf{1.86}  & 1.20e-01 & \bf{1.87}  \\
1.77e-02 & 3.28e-02 & \bf{1.95} & 1.78e-02 & \bf{1.96} & 2.15e-02 & \bf{1.94}  & 3.05e-02 & \bf{1.97} \\
8.84e-03 & 8.29e-03 & \bf{1.98} & 4.50e-03 & \bf{1.98} & 5.48e-03 & \bf{1.97}  & 7.68e-03 & \bf{1.99}  \\
4.42e-03 & 2.08e-03 & \bf{1.99} & 1.13e-03 & \bf{1.99} & 1.38e-03 & \bf{1.98}  & 1.93e-03 & \bf{2.00}  \\ 
\hline
\hline
\multicolumn{9}{c}{$p = 3$} \\
\hline
\hline
7.07e-02 & 8.54e-02 & \bf{---} & 5.73e-02 & \bf{---} & 5.87e-02 & \bf{---} & 8.65e-02 & \bf{---} \\
3.54e-02 & 1.27e-02 & \bf{2.75} & 8.00e-03 & \bf{2.84} & 8.28e-03 & \bf{2.83}  & 1.27e-02 & \bf{2.77} \\
1.77e-02 & 1.77e-03 & \bf{2.84} & 1.08e-03 & \bf{2.89} & 1.12e-03 & \bf{2.88}  & 1.75e-03 & \bf{2.86}  \\
8.84e-03 & 2.35e-04 & \bf{2.91} & 1.42e-04 & \bf{2.93} & 1.48e-04 & \bf{2.93}  & 2.32e-04 & \bf{2.92} \\
4.42e-03 & 3.04e-05 & \bf{2.95} & 1.82e-05 & \bf{2.96} & 1.90e-05 & \bf{2.96}  & 2.99e-05 & \bf{2.96}  \\ 
\hline
\hline
\multicolumn{9}{c}{$p = 4$} \\
\hline
\hline
7.07e-02 & 1.06e-02 & \bf{---} & 9.36e-03 & \bf{---} & 9.27e-03 & \bf{---} & 1.08e-02 & \bf{---} \\
3.54e-02 & 7.93e-04 & \bf{3.74} & 6.56e-04 & \bf{3.84} & 6.64e-04 & \bf{3.80}  & 7.95e-04 & \bf{3.76}  \\
1.77e-02 & 5.97e-05 & \bf{3.73} & 4.59e-05 & \bf{3.84} & 4.66e-05 & \bf{3.83}  & 5.94e-05 & \bf{3.74}  \\
8.84e-03 & 4.42e-06 & \bf{3.76}  & 3.16e-06 & \bf{3.86} & 3.21e-06 & \bf{3.86}  & 4.39e-06 & \bf{3.76}  \\
4.42e-03 & 3.13e-07 & \bf{3.82} & 2.11e-07 & \bf{3.90}  & 2.14e-07 & \bf{3.90}  & 3.11e-07 & \bf{3.82} \\
\hline
\hline
\end{tabular}
\caption{$h$-convergence for the quasi-Trefftz version applied to the quantum harmonic oscillator problem with potential~$V(x) = 50x^2$ and exact solution~$\psi_2$ in~\eqref{EQN::EXACT-SOLUTION-HARMONIC} for different combinations of the stabilization parameters~$\alpha, \beta$ and  volume penalty parameter~$\mu\ne0$.
\label{TAB::STABILIZATION-HARMONIC-QT-1}}
\end{table}

\begin{table}[!ht]
\centering
\begin{tabular}{ccccccccc}
\hline
\multicolumn{9}{c}{$\mu = 0 $} \\
\hline 
$h$ & \multicolumn{2}{c}{$\alpha = \frac1{h_{F_\bx}}, \beta = h_{F_\bx}$} & \multicolumn{2}{c}{$\alpha = 0, \beta = 0$} & \multicolumn{2}{c}{$\alpha = \frac1{h_{F_\bx}}, \beta = 0$} & \multicolumn{2}{c}{$\alpha = 0, \beta = h_{F_\bx}$} \\
\hline
& DG error & Rate & DG error & Rate & DG error & Rate & DG error & Rate \\
\hline
\hline
\multicolumn{9}{c}{$p = 1$} \\
\hline
\hline
7.07e-02 & 1.04e+00 & \bf{---} & 1.16e+00 & \bf{---} & 1.07e+00 & \bf{---} & 1.09e+00 & \bf{---} \\
3.54e-02 & 7.78e-01 & \bf{0.43} & 5.02e-01 & \bf{1.21} & 6.84e-01 & \bf{0.64} & 6.69e-01 & \bf{0.70}  \\
1.77e-02 & 4.42e-01 & \bf{0.81} & 2.18e-01 & \bf{1.20} & 3.64e-01 & \bf{0.91}  & 3.42e-01 & \bf{0.97} \\
8.84e-03 & 2.29e-01 & \bf{0.95} & 1.02e-01 & \bf{1.09} & 1.85e-01 & \bf{0.97}  & 1.71e-01 & \bf{1.00} \\
4.42e-03 & 1.16e-01 & \bf{0.99} & 4.98e-02 & \bf{1.04} & 9.32e-02 & \bf{0.99}  & 8.50e-02 & \bf{1.01} \\
\hline
\hline
\multicolumn{9}{c}{$p = 2$} \\
\hline
\hline
7.07e-02 & 4.63e-01 & \bf{---} & 2.96e-01 & \bf{---} & 3.23e-01 & \bf{---} & 4.60e-01 & \bf{---}\\
3.54e-02 & 1.29e-01 & \bf{1.84} & 7.38e-02 & \bf{2.00} & 8.58e-02 & \bf{1.91}  & 1.23e-01 & \bf{1.90} \\
1.77e-02 & 3.31e-02 & \bf{1.97} & 1.84e-02 & \bf{2.01} & 2.19e-02 & \bf{1.97} & 3.09e-02 & \bf{1.99} \\
8.84e-03 & 8.33e-03 & \bf{1.99} & 4.58e-03 & \bf{2.00} & 5.54e-03 & \bf{1.99}  & 7.73e-03 & \bf{2.00} \\
4.42e-03 & 2.09e-03 & \bf{2.00} & 1.14e-03 & \bf{2.00} & 1.39e-03 & \bf{1.99}  & 1.93e-03 & \bf{2.00} \\
\hline
\hline
\multicolumn{9}{c}{$p = 3$} \\
\hline
\hline
7.07e-02 & 8.73e-02 & \bf{---} & 7.84e-02 & \bf{---} & 7.59e-02 & \bf{---} & 8.85e-02 & \bf{---} \\
3.54e-02 & 1.31e-02 & \bf{2.74} & 9.65e-03 & \bf{3.02} & 9.72e-03 & \bf{2.96} & 1.31e-02 & \bf{2.76} \\
1.77e-02 & 1.82e-03 & \bf{2.85} & 1.20e-03 & \bf{3.01} & 1.23e-03 & \bf{2.98} & 1.80e-03 & \bf{2.86} \\
8.84e-03 & 2.39e-04 & \bf{2.92} & 1.50e-04 & \bf{3.00} & 1.55e-04 & \bf{2.99}  & 2.36e-04 & \bf{2.93} \\
4.42e-03 & 3.07e-05 & \bf{2.96} & 1.87e-05 & \bf{3.00} & 1.95e-05 & \bf{2.99}  & 3.02e-05 & \bf{2.97} \\
\hline
\hline
\multicolumn{9}{c}{$p = 4$} \\
\hline
\hline
7.07e-02 & 1.09e-02 & \bf{---} & 1.71e-02 & \bf{---} & 1.56e-02 & \bf{---} & 1.12e-02 & \bf{---} \\
3.54e-02 & 7.97e-04 & \bf{3.77} & 9.77e-04 & \bf{4.13} & 9.60e-04 & \bf{4.02}  & 7.98e-04 & \bf{3.81} \\
1.77e-02 & 6.02e-05 & \bf{3.73} & 5.97e-05 & \bf{4.03} & 5.98e-05 & \bf{4.00}  & 5.99e-05 & \bf{3.73} \\
8.84e-03 & 4.50e-06 & \bf{3.74} & 3.71e-06 & \bf{4.01} & 3.74e-06 & \bf{4.00}  & 4.48e-06 & \bf{3.74} \\
4.42e-03 & 3.19e-07 & \bf{3.82} & 2.31e-07 & \bf{4.00} & 2.34e-07 & \bf{4.00}  & 3.17e-07 & \bf{3.82} \\ 
\hline
\hline
\end{tabular}
\caption{$h$-convergence for the quasi-Trefftz version applied to the quantum harmonic oscillator problem with potential~$V(x) = 50x^2$ and exact solution~$\psi_2$ in~\eqref{EQN::EXACT-SOLUTION-HARMONIC} for different combinations of the stabilization parameters~$\alpha, \beta$ and volume penalty parameter~$\mu=0$.
\label{TAB::STABILIZATION-HARMONIC-QT-2}}
\end{table}

\begin{table}[!ht]
\centering
\begin{tabular}{ccccccccc}
\hline
\multicolumn{9}{c}{$\mu = \max\{\hKt, \hKx\} $} \\
\hline 
$h$ & \multicolumn{2}{c}{$\alpha = \frac1{h_{F_\bx}}, \beta = h_{F_\bx}$} & \multicolumn{2}{c}{$\alpha = 0, \beta = 0$} & \multicolumn{2}{c}{$\alpha = \frac1{h_{F_\bx}}, \beta = 0$} & \multicolumn{2}{c}{$\alpha = 0, \beta = h_{F_\bx}$} \\
\hline
& DG error & Rate & DG error & Rate & DG error & Rate & DG error & Rate \\
\hline
\hline
\multicolumn{9}{c}{$p = 1$} \\
\hline
\hline
7.07e-02 & 1.00e+00 & \bf{---} & 9.81e-01 & \bf{---} & 1.01e+00 & \bf{---}\\
3.54e-02 & 7.67e-01 & \bf{0.39} & 4.76e-01 & \bf{1.04} & 6.72e-01 & \bf{0.58} & 1.00e+00 & \bf{---} \\
1.77e-02 & 4.40e-01 & \bf{0.80} & 2.14e-01 & \bf{1.15} & 3.62e-01 & \bf{0.89} & 3.40e-01 & \bf{0.94} \\
8.84e-03 & 2.29e-01 & \bf{0.94} & 1.01e-01 & \bf{1.08} & 1.85e-01 & \bf{0.97} & 1.70e-01 & \bf{1.00}  \\
4.42e-03 & 1.16e-01 & \bf{0.98} & 4.96e-02 & \bf{1.03} & 9.31e-02 & \bf{0.99} & 8.49e-02 & \bf{1.00} \\
\hline
\hline
\multicolumn{9}{c}{$p = 2$} \\
\hline
\hline
7.07e-02 & 4.46e-01 & \bf{---} & 2.55e-01 & \bf{---} & 2.96e-01 & \bf{---} & 4.34e-01 & \bf{---}\\
3.54e-02 & 1.27e-01 & \bf{1.81} & 6.88e-02 & \bf{1.89} & 8.22e-02 & \bf{1.85} & 1.20e-01 & \bf{1.86} \\
1.77e-02 & 3.28e-02 & \bf{1.95} & 1.77e-02 & \bf{1.95} & 2.15e-02 & \bf{1.94} & 3.05e-02 & \bf{1.97} \\
8.84e-03 & 8.29e-03 & \bf{1.98} & 4.50e-03 & \bf{1.98} & 5.48e-03 & \bf{1.97} & 7.68e-03 & \bf{1.99} \\
4.42e-03 & 2.08e-03 & \bf{1.99} & 1.13e-03 & \bf{1.99} & 1.38e-03 & \bf{1.98} & 1.93e-03 & \bf{2.00} \\ 
\hline
\hline
\multicolumn{9}{c}{$p = 3$} \\
\hline
\hline
7.07e-02 & 7.62e-02 & \bf{---} & 4.67e-02 & \bf{---} & 4.93e-02 & \bf{---} & 7.65e-02 & \bf{---}\\
3.54e-02 & 1.03e-02 & \bf{2.89} & 6.22e-03 & \bf{2.91} & 6.68e-03 & \bf{2.88} & 1.01e-02 & \bf{2.92} \\
1.77e-02 & 1.33e-03 & \bf{2.96} & 8.05e-04 & \bf{2.95} & 8.71e-04 & \bf{2.94} & 1.29e-03 & \bf{2.97} \\
8.84e-03 & 1.68e-04 & \bf{2.98} & 1.03e-04 & \bf{2.97} & 1.11e-04 & \bf{2.97} & 1.62e-04 & \bf{2.99} \\
4.42e-03 & 2.11e-05 & \bf{2.99} & 1.30e-05 & \bf{2.99} & 1.41e-05 & \bf{2.98} & 2.03e-05 & \bf{2.99} \\
\hline
\hline
\multicolumn{9}{c}{$p = 4$} \\
\hline
\hline
7.07e-02 & 8.63e-03 & \bf{---} & 6.05e-03 & \bf{---} & 6.14e-03 & \bf{---} & 8.74e-03 & \bf{---}\\
3.54e-02 & 5.82e-04 & \bf{3.89} & 3.95e-04 & \bf{3.94} & 4.10e-04 & \bf{3.90}  & 5.77e-04 & \bf{3.92} \\
1.77e-02 & 3.74e-05 & \bf{3.96} & 2.54e-05 & \bf{3.96} & 2.66e-05 & \bf{3.95} & 3.67e-05 & \bf{3.97} \\
8.84e-03 & 2.37e-06 & \bf{3.98} & 1.62e-06 & \bf{3.97} & 1.69e-06 & \bf{3.97} & 2.32e-06 & \bf{3.99} \\
4.42e-03 & 1.49e-07 & \bf{3.99} & 1.02e-07 & \bf{3.99} & 1.07e-07 & \bf{3.98} & 1.45e-07 & \bf{3.99} \\
\hline
\hline
\end{tabular}
\caption{$h$-convergence for the full polynomial version applied to the quantum harmonic oscillator problem with potential~$V(x) = 50x^2$ and exact solution~$\psi_2$ in~\eqref{EQN::EXACT-SOLUTION-HARMONIC} for different combinations of the stabilization parameters~$\alpha, \beta$ and volume penalty parameter~$\mu\ne0$.
\label{TAB::STABILIZATION-HARMONIC-FULL-1}}
\end{table}

\begin{table}[!ht]
\centering
\begin{tabular}{ccccccccc}
\hline
\multicolumn{9}{c}{$\mu = 0 $} \\
\hline 
$h$ & \multicolumn{2}{c}{$\alpha = \frac1{h_{F_\bx}}, \beta = h_{F_\bx}$} & \multicolumn{2}{c}{$\alpha = 0, \beta = 0$} & \multicolumn{2}{c}{$\alpha = \frac1{h_{F_\bx}}, \beta = 0$} & \multicolumn{2}{c}{$\alpha = 0, \beta = h_{F_\bx}$} \\
\hline
& DG error & Rate & DG error & Rate & DG error & Rate & DG error & Rate \\
\hline
\hline
\multicolumn{9}{c}{$p = 1$} \\
\hline
\hline
7.07e-02 & 1.04e+00 & \bf{---} & 1.16e+00 & \bf{---} & 1.07e+00 & \bf{---} & 1.09e+00 & \bf{---}\\
3.54e-02 & 7.78e-01 & \bf{0.43} & 5.02e-01 & \bf{1.21} & 6.84e-01 & \bf{0.64}  & 6.69e-01 & \bf{0.70} \\
1.77e-02 & 4.42e-01 & \bf{0.81} & 2.18e-01 & \bf{1.20} & 3.64e-01 & \bf{0.91} & 3.42e-01 & \bf{0.97}   \\
8.84e-03 & 2.29e-01 & \bf{0.95} & 1.02e-01 & \bf{1.09} & 1.85e-01 & \bf{0.97} & 1.71e-01 & \bf{1.00}  \\
4.42e-03 & 1.16e-01 & \bf{0.99} & 4.98e-02 & \bf{1.04} & 9.32e-02 & \bf{0.99} & 8.50e-02 & \bf{1.01}  \\
\hline
\hline
\multicolumn{9}{c}{$p = 2$} \\
\hline
\hline
7.07e-02 & 4.63e-01 & \bf{---} & 2.93e-01 & \bf{---} & 3.22e-01 & \bf{---} & 4.57e-01 & \bf{---}\\
3.54e-02 & 1.29e-01 & \bf{1.84} & 7.36e-02 & \bf{1.99} & 8.57e-02 & \bf{1.91} & 1.23e-01 & \bf{1.90}  \\
1.77e-02 & 3.31e-02 & \bf{1.97}  & 1.84e-02 & \bf{2.00} & 2.19e-02 & \bf{1.97} & 3.09e-02 & \bf{1.99} \\
8.84e-03 & 8.33e-03 & \bf{1.99} & 4.58e-03 & \bf{2.00} & 5.54e-03 & \bf{1.98} & 7.72e-03 & \bf{2.00}  \\
4.42e-03 & 2.09e-03 & \bf{2.00} & 1.14e-03 & \bf{2.00} & 1.39e-03 & \bf{1.99} & 1.93e-03 & \bf{2.00}  \\
\hline
\hline
\multicolumn{9}{c}{$p = 3$} \\
\hline
\hline
7.07e-02 & 8.09e-02 & \bf{---} & 5.42e-02 & \bf{---} & 5.54e-02 & \bf{---} & 8.19e-02 & \bf{---}\\
3.54e-02 & 1.06e-02 & \bf{2.93} & 6.74e-03 & \bf{3.01} & 7.12e-03 & \bf{2.96} & 1.04e-02 & \bf{2.97}  \\
1.77e-02 & 1.35e-03 & \bf{2.98} & 8.41e-04 & \bf{3.00} & 9.02e-04 & \bf{2.98} & 1.31e-03 & \bf{3.00}  \\
8.84e-03 & 1.69e-04 & \bf{2.99} & 1.05e-04 & \bf{3.00} & 1.13e-04 & \bf{2.99} & 1.64e-04 & \bf{3.00}  \\
4.42e-03 & 2.12e-05 & \bf{3.00} & 1.31e-05 & \bf{3.00} & 1.42e-05 & \bf{3.00} & 2.04e-05 & \bf{3.00}  \\
\hline
\hline
\multicolumn{9}{c}{$p = 4$} \\
\hline
\hline
7.07e-02 & 9.27e-03 & \bf{---} & 6.96e-03 & \bf{---} & 6.94e-03 & \bf{---} & 9.48e-03 & \bf{---}\\
3.54e-02 & 6.03e-04 & \bf{3.94} & 4.27e-04 & \bf{4.03}  & 4.39e-04 & \bf{3.98} & 5.99e-04 & \bf{3.99} \\
1.77e-02 & 3.81e-05 & \bf{3.98} & 2.66e-05 & \bf{4.01} & 2.76e-05 & \bf{3.99} & 3.74e-05 & \bf{4.00}  \\
8.84e-03 & 2.39e-06 & \bf{3.99} & 1.66e-06 & \bf{4.00} & 1.73e-06 & \bf{3.99} & 2.34e-06 & \bf{4.00}  \\
4.42e-03 & 1.50e-07 & \bf{4.00} & 1.04e-07 & \bf{4.00} & 1.08e-07 & \bf{4.00} & 1.46e-07 & \bf{4.00}  \\
\hline
\hline
\end{tabular}
\caption{$h$-convergence for the full polynomial version applied to the quantum harmonic oscillator problem with potential~$V(x) = 50x^2$ and exact solution~$\psi_2$ in~\eqref{EQN::EXACT-SOLUTION-HARMONIC} for different combinations of the stabilization parameters~$\alpha, \beta$ and volume penalty parameter~$\mu=0$.
\label{TAB::STABILIZATION-HARMONIC-FULL-2}}
\end{table}

\subsubsection{$p$-Convergence}

We now study numerically the~$p$-convergence of the method, i.e., for a fixed space--time mesh~$\Th$, we study the errors when increasing the polynomial degree~$p$. We consider the~$(1+1)$-dimensional problems above with the same parameters and the coarsest meshes for each case. In Figure~\ref{FIG::P-ERROR}, we compare the errors obtained for the method with the two choices for the discrete space~$\bVhp(\Th)$ analyzed in the previous sections: the full polynomial space~\eqref{EQN::POLYNOMIAL-SPACE} and the quasi-Trefftz polynomial space~\eqref{EQN::GLOBAL-QUASI-TREFFTZ}. As expected, for the quasi-Trefftz version we observe exponential decay of the error of order~$\ORDER{\mathrm{e}^{-bN_{dofs}}}$, where~$N_{dofs}$ denotes the total number of degrees of freedom. As for the full polynomial space, only root-exponential convergence~$\ORDER{\mathrm{e}^{-c\sqrt{N_{dofs}}}}$ is expected. The superiority of the quasi-Trefftz version is evident in all cases. 
Exponential convergence of space--time Trefftz and quasi-Trefftz schemes has been observed in several cases~\cite{ImbertGerard_Desperes_2014,Gomez_Moiola_2022,Banjai_Georgoulis_Lijoka_2017,Perugia_Schoeberl_Stocker_Wintersteiger_2020} but no proof is available yet (differently from the stationary case, \cite[\S3]{Hiptmair_Moiola_Perugia_2016}). In general, for a~$(d + 1)$-dimensional problem, we expect exponential convergence of order~$\mathcal{O}({e}^{-b \sqrt[d]{N_{DoFs}}})$ and $\mathcal{O}({e}^{-c \sqrt[(d+1)]{N_{DoFs}}})$ for the quasi-Trefftz and full-polynomial versions, respectively.

\begin{figure}[!ht]
    \centering
    \subfloat[Harmonic oscillator\label{FIG::P-ERROR-HO}]{\includegraphics[width = .45\textwidth]{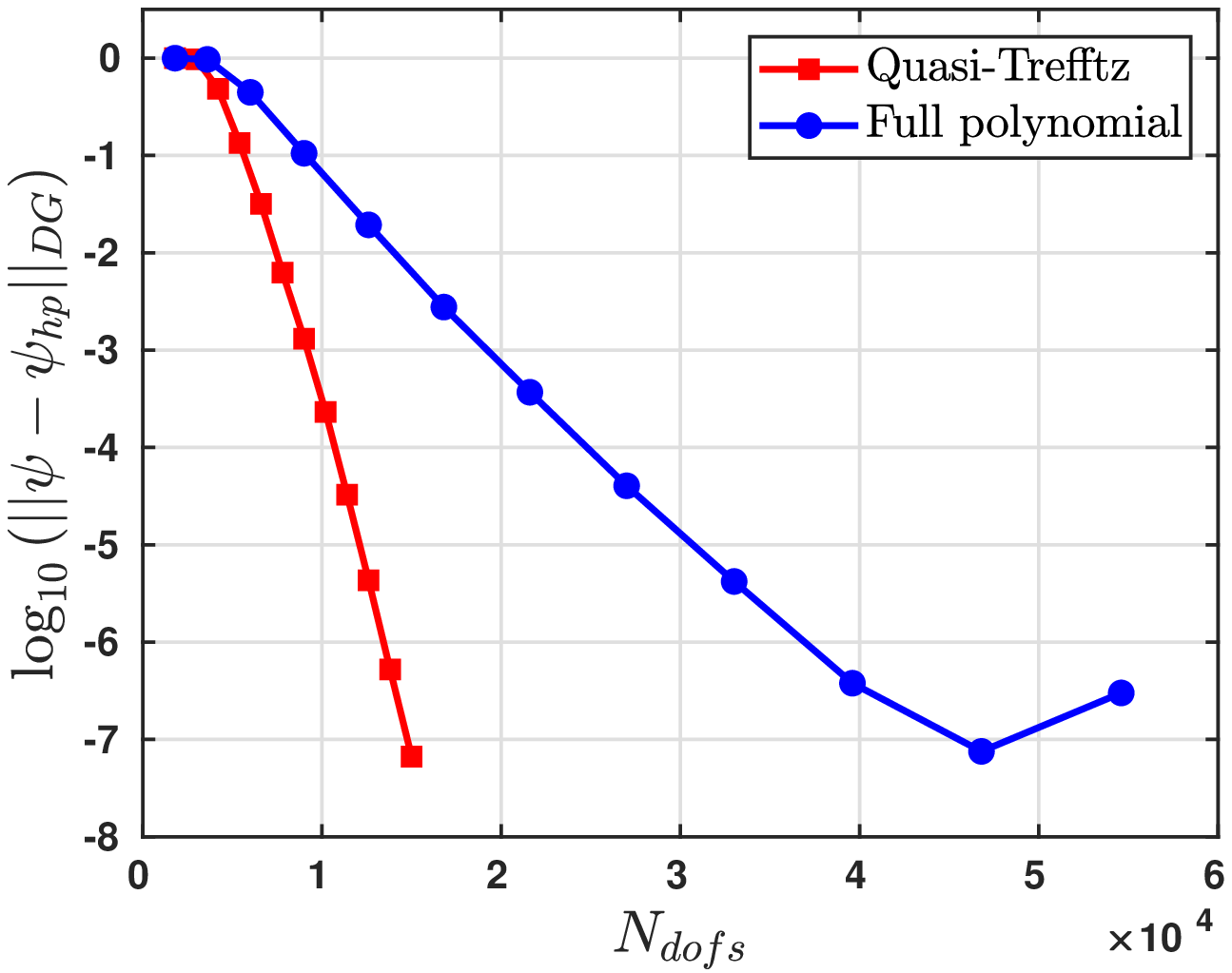} }
    \subfloat[Reflectionless potential]{\includegraphics[width = .45\textwidth]{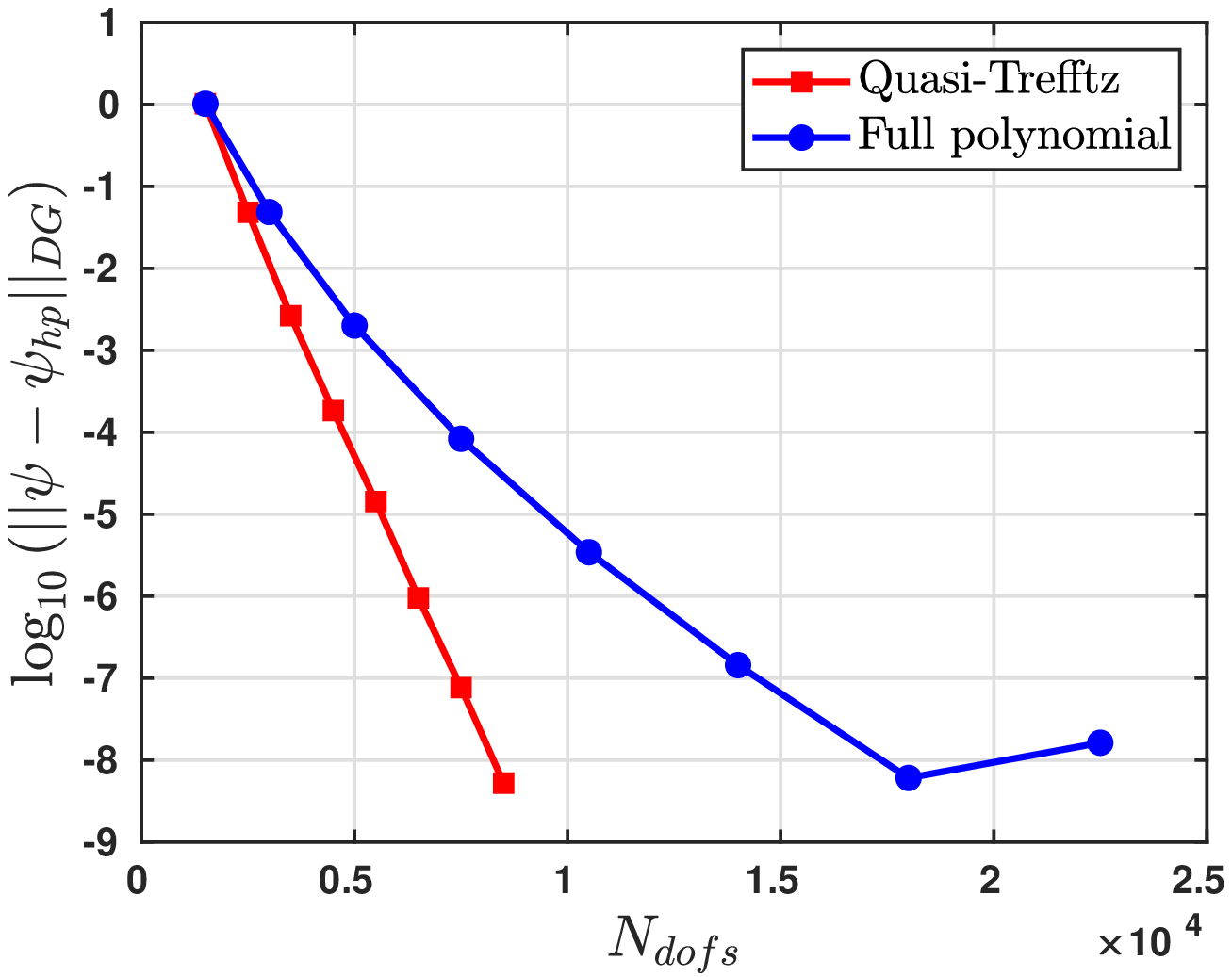} }
    \\
    \subfloat[Morse potential]{\includegraphics[width = .45\textwidth]{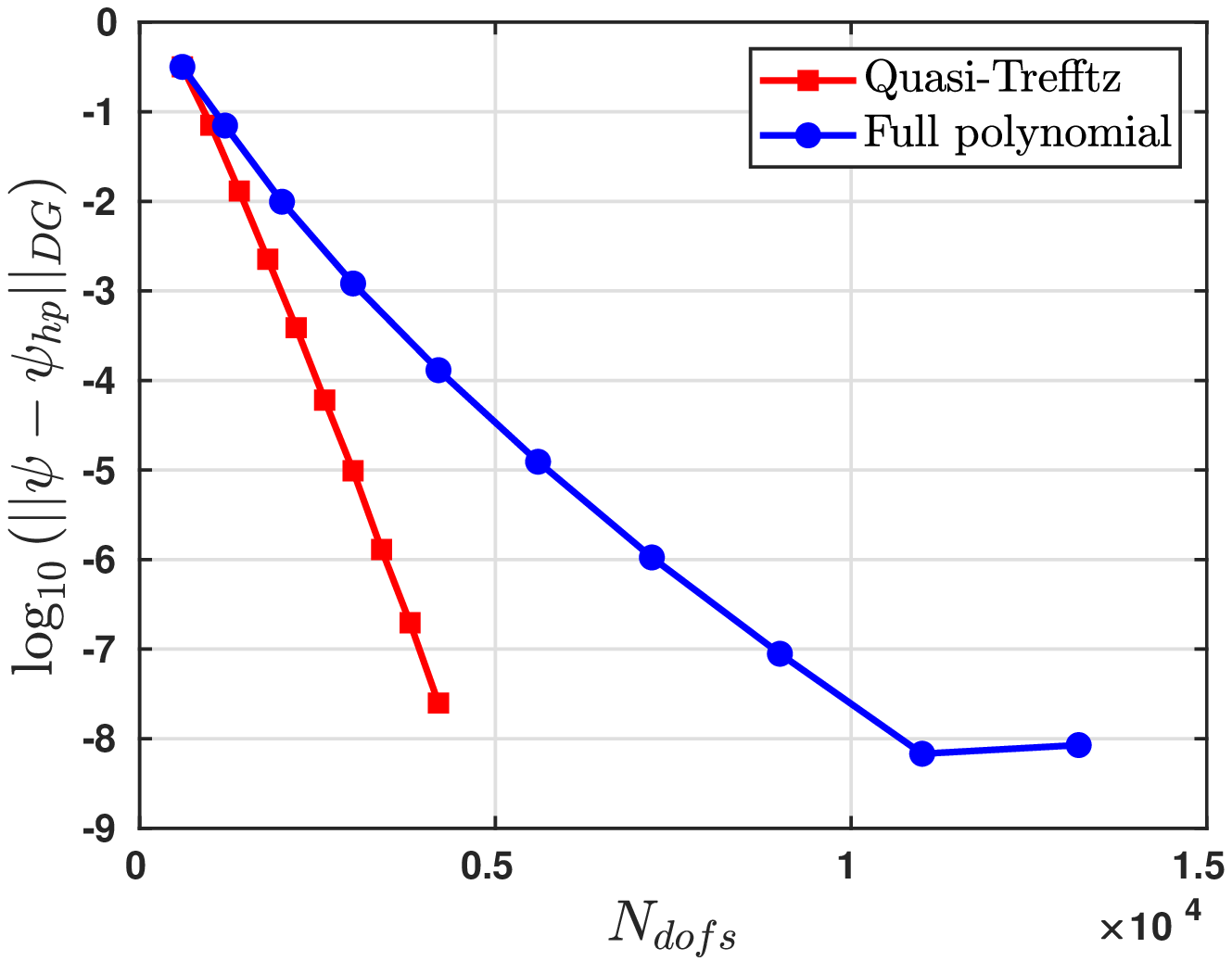}  }
    \subfloat[Square-well potential]{\includegraphics[width = .45\textwidth]{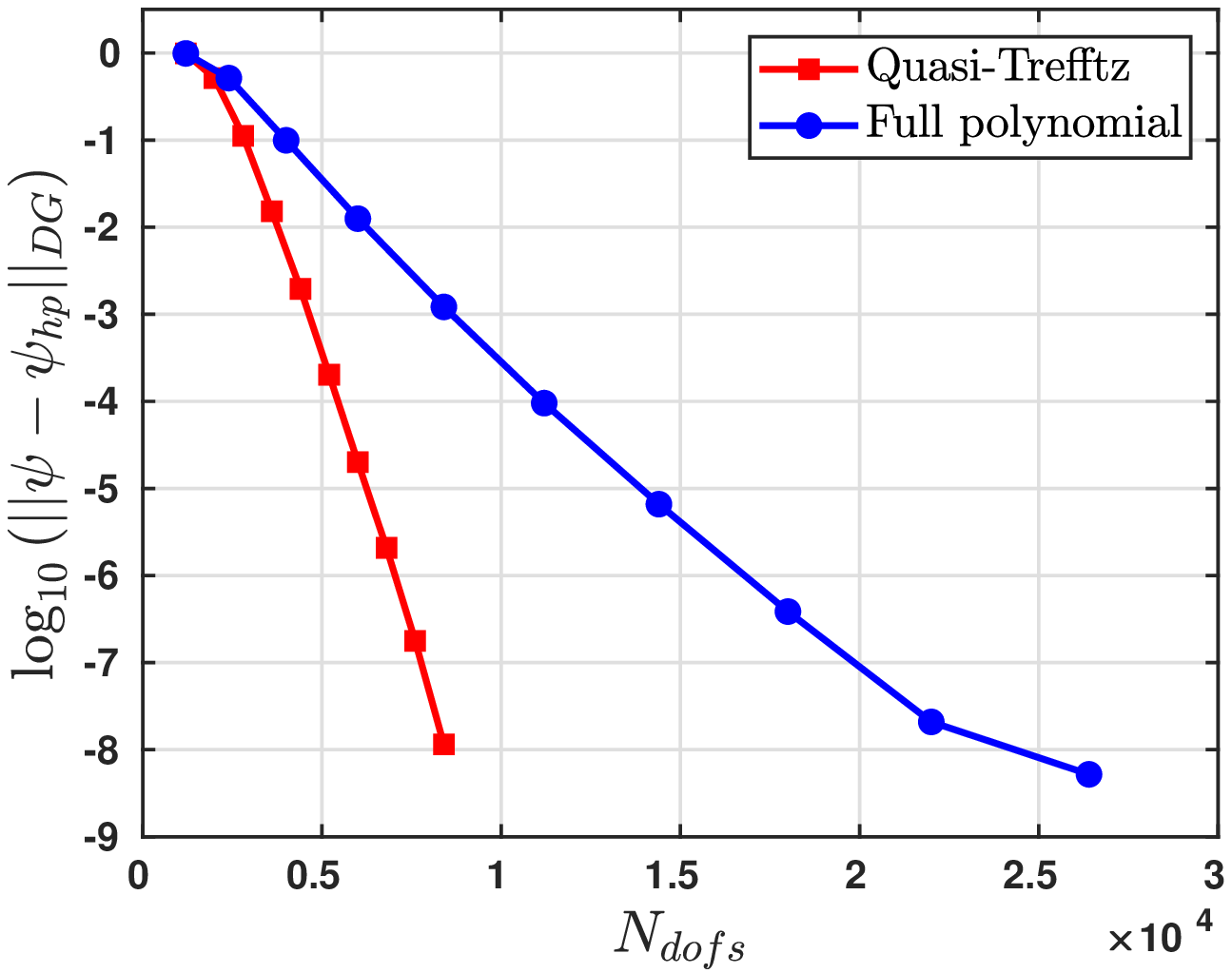}
    }
    \caption{$p$-convergence for the coarsest mesh in the $(1+1)$-dimensional problems. \label{FIG::P-ERROR}}
    \end{figure}


\subsubsection{Conditioning}
We now assess the conditioning of the stiffness matrix. 
In Figure~\ref{FIG::CONDITIONING} we compare the 2-condition number $\kappa_2(\cdot)$ for the stiffness matrix~$\mathbf{K}_n$ defined in Remark~\ref{REM::TIME-SLABS}, for the free particle problem~$V = 0$ on the space--time domain $\QT = (0, 1) \times (0, 1)$. 
We consider the proposed polynomial quasi-Trefftz space in~\eqref{EQN::GLOBAL-QUASI-TREFFTZ}, the full-polynomial space in~\eqref{EQN::POLYNOMIAL-SPACE} and the pure-Trefftz space of complex exponential wave functions~$\bVp{p}(\Th)$ proposed in~\cite{Gomez_Moiola_2022}. A basis~$\left\{\phi_\ell\right\}_{\ell = 1}^{2p + 1} \subset \bVp{p}(\Th)$ was defined in~\cite{Gomez_Moiola_2022} as
\begin{equation}
\label{EQN::COMPLEX-EXPONENTIAL-BASIS}
\phi_{\ell}(x, t) = \exp\left(i \left(\kappa_{\ell} x - \frac{\kappa_{\ell}^2}{2} t\right)\right), \qquad \ell = 1, \ldots, 2p + 1.
\end{equation}
We consider two choices for the parameters~$\kappa_\ell$: the arbitrary choice used in \cite{Gomez_Moiola_2022} $ \kappa_\ell = -p, \ldots, p$, and the choice $\kappa_\ell = 2\pi\ell/h_x$ which makes the basis orthogonal in each element.
The conditioning number~$\kappa_2(\mathbf{K})$ for the quasi-Trefftz space, the full polynomial space, and the Trefftz space with orthogonal basis asymptotically grows as~$\ORDER{h^{-1}}$ for all~$p \in \IN$, while for the Trefftz space with a non-orthogonal basis, asymptotically grows as~$\ORDER{h^{-(2p + 1)}}$.
Unfortunately, with higher dimensions and non-Cartesian elements, choosing the parameters and directions defining the basis functions~$\{\phi_{\ell}\}$ so as to obtain an orthogonal basis is more challenging.

\begin{figure}[!ht]
    \centering
    \subfloat[Quasi-Trefftz space~\eqref{EQN::GLOBAL-QUASI-TREFFTZ}]{
        \includegraphics[width = .45\textwidth]{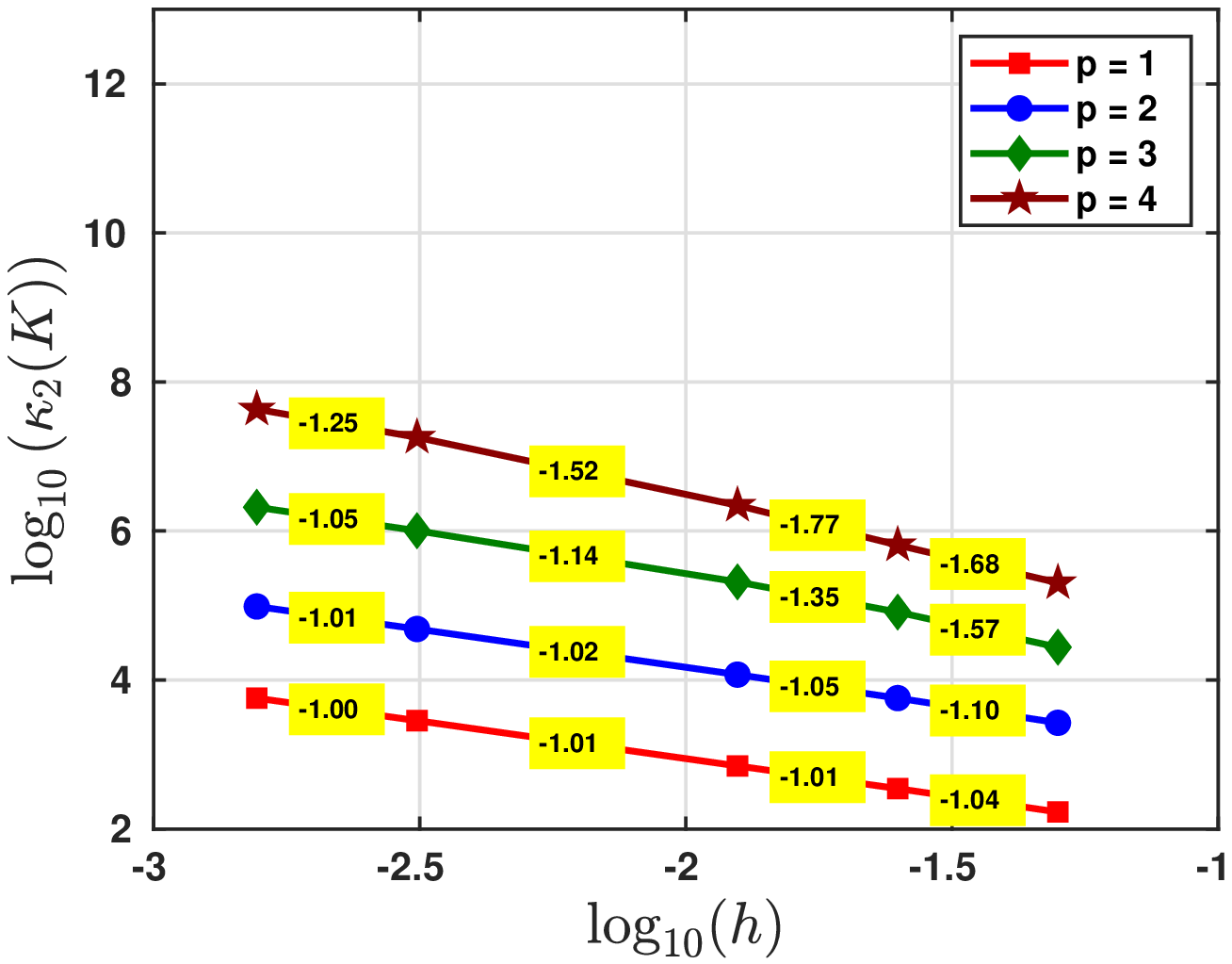}}
    \subfloat[Full polynomial space~\eqref{EQN::POLYNOMIAL-SPACE}]{
        \includegraphics[width = .45\textwidth]{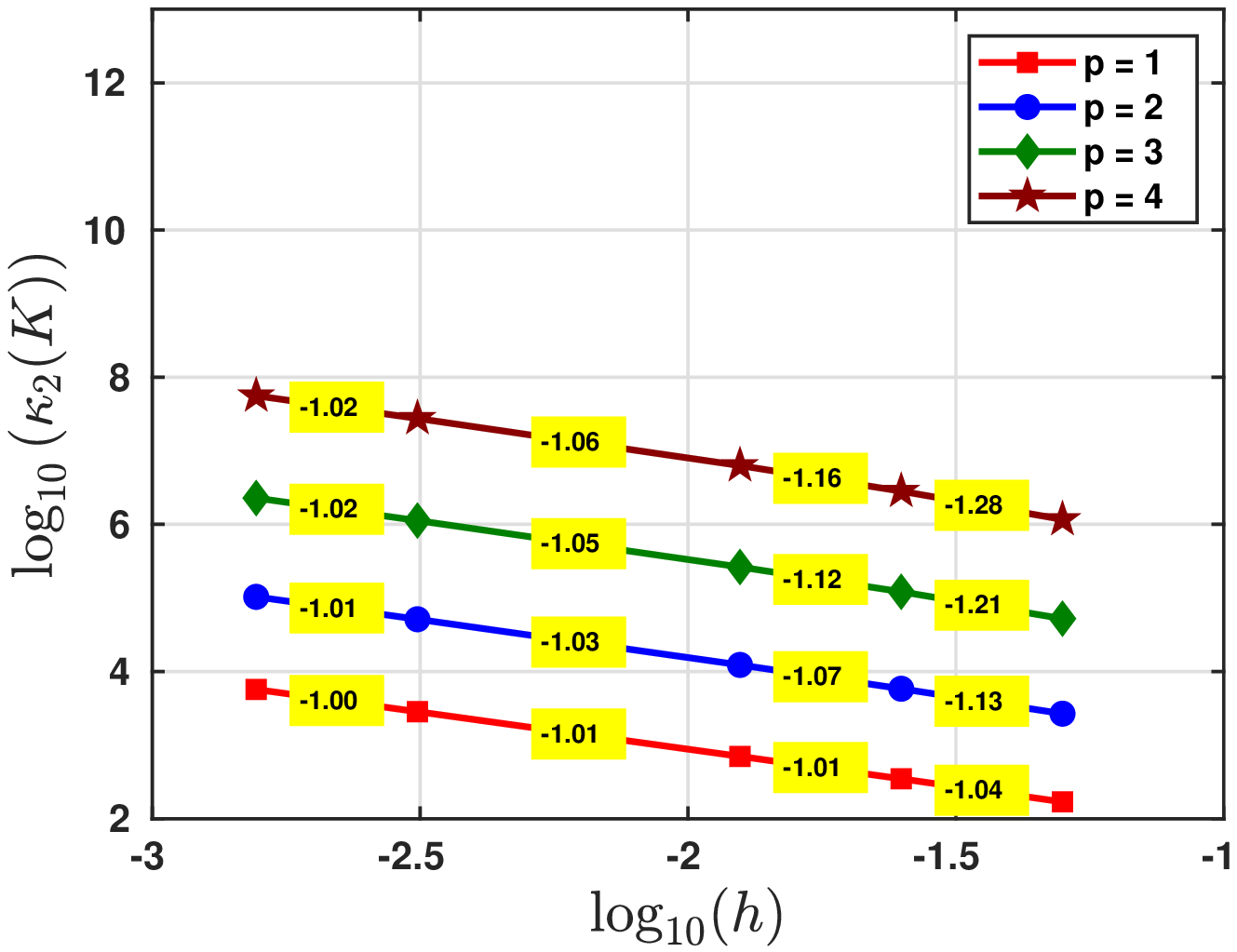}}
    \\
    \subfloat[Trefftz space~\cite{Gomez_Moiola_2022} (orthogonal basis)]{
        \includegraphics[width = .45\textwidth]{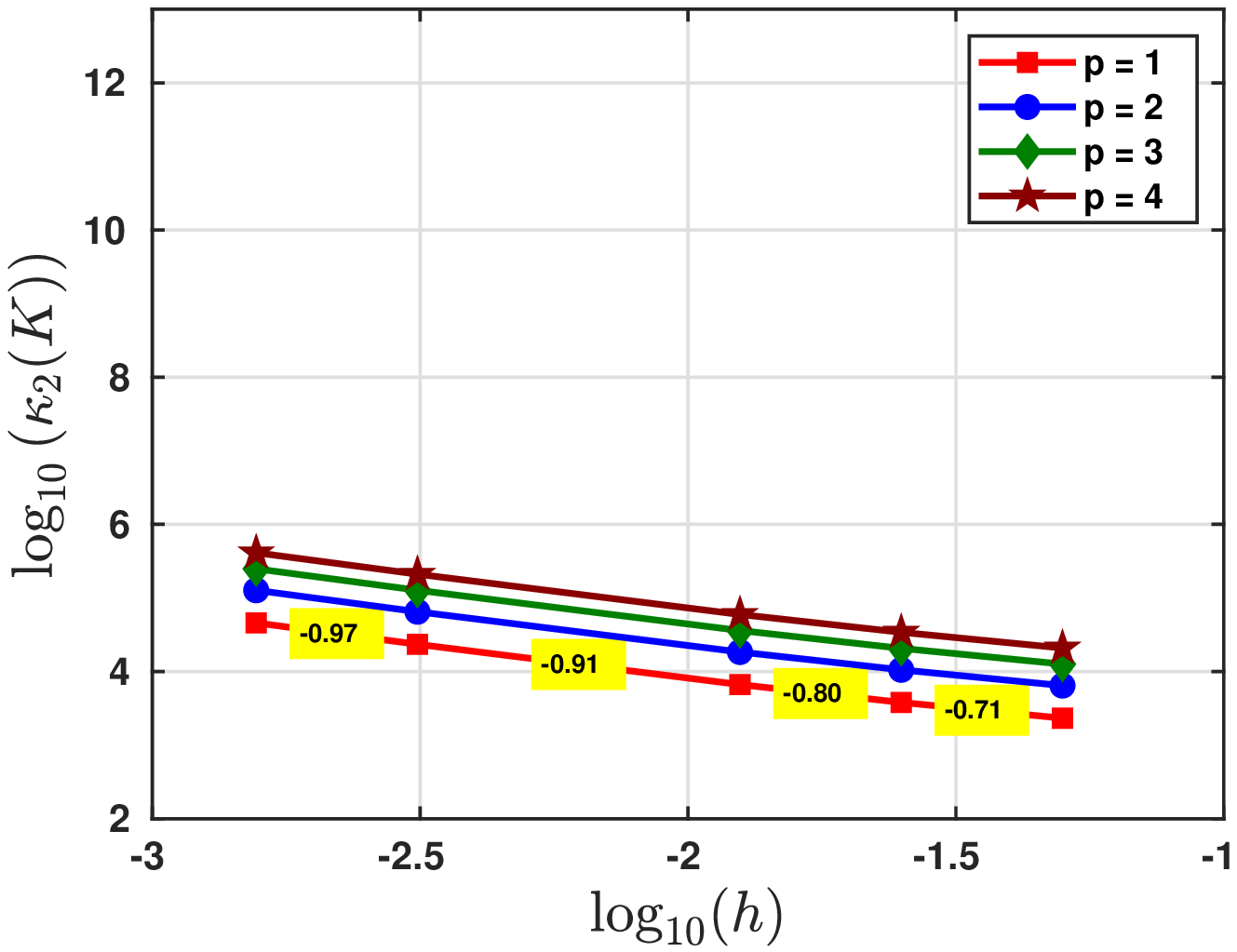}}
    \subfloat[Trefftz space~\cite{Gomez_Moiola_2022} (non-orthogonal basis)]{
        \includegraphics[width = .45\textwidth]{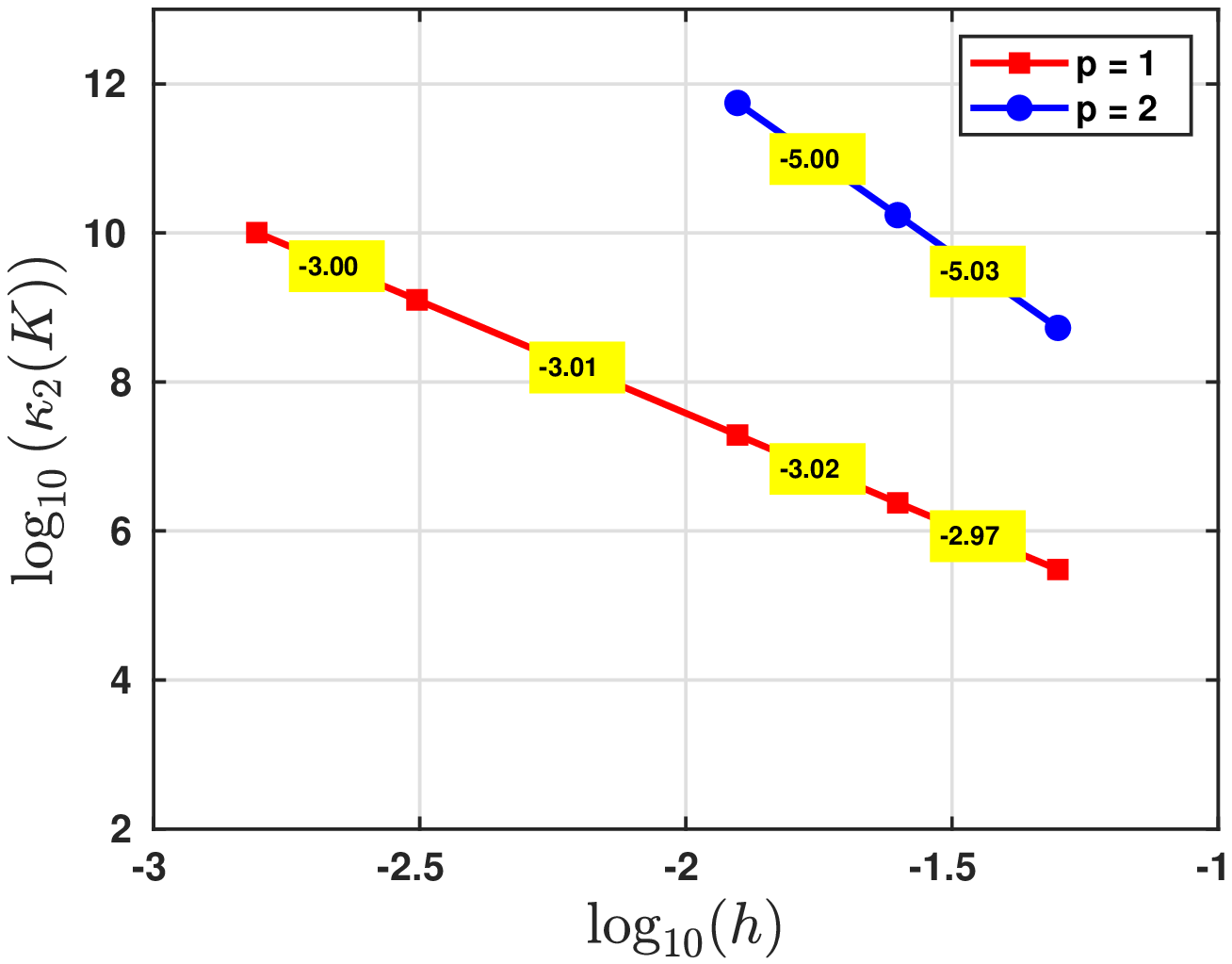}}
    \caption{Conditioning of the stiffness matrix for the DG method with different discrete spaces. \label{FIG::CONDITIONING}}
\end{figure}

\subsection{$(2+1)$-dimensional test cases}
\noindent We now present some numerical test for space dimension~$d = 2$. We recall that we use Cartesian space--time meshes with uniform partitions along each direction. 
\subsubsection{$h$-convergence}
\paragraph{Singular time-independent potential ($V(x, y) = 1 - 1/x^2 - 1/y^2$)} 
    
    We consider the $(2 + 1)$-dimensional problem on $\QT = (0, 1)^2 \times (0, 1)$ with exact solution  (see \cite{Subacsi_2002})
    \begin{equation}
    \label{EQN::EXACT-SOLUTION-RATIONAL-POTENTIAL}
        \psi(x, y, t) = x^2 y^2 e^{it}.
    \end{equation}
In Figure \ref{FIG::RATIONAL}, we show the errors obtained for a sequence of meshes with $h_x = h_y = h_t = 0.1$, $0.0667$, $0.05$, $0.04$ and different degrees of approximation $p$. As in the numerical results for the~$(1+1)$-dimensional problems, we obtain rates of convergence of order $\ORDER{h^p}$ in the DG norm, and $\ORDER{h^{p+1}}$ in the $L^2$ norm at the final time. 

\begin{figure}[!ht] 
    \centering
    \subfloat[DG error]{\includegraphics[width = .45\textwidth]{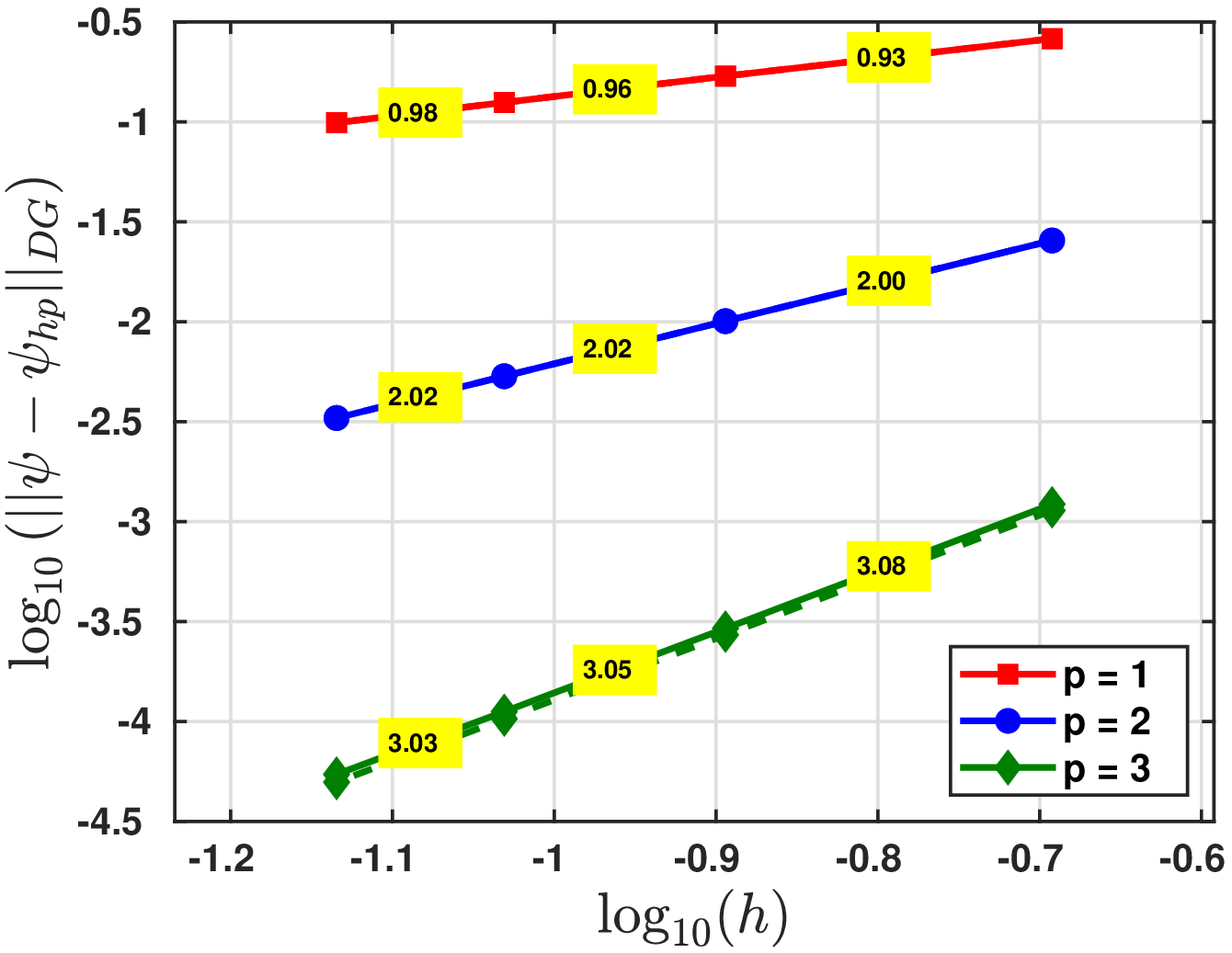}}
    \hspace{0.1in}
    \subfloat[$L^2$ error at $T = 1$]{\includegraphics[width = .45\textwidth]{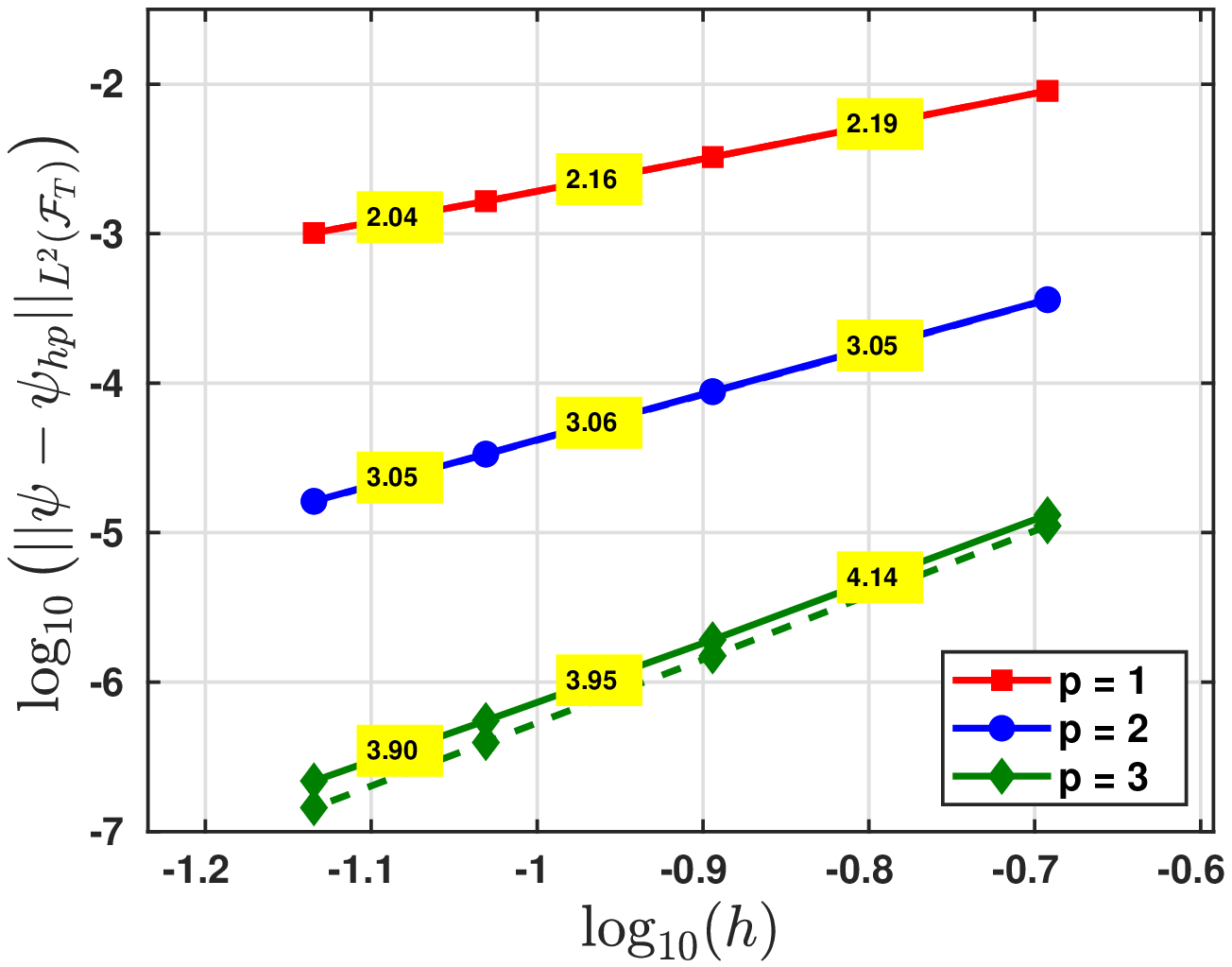}}
    \caption{$h$-convergence for the~$(2+1)$-dimensional problem with potential~$V(x, y) = 1-1/x^2 - 1/y^2$ and exact solution~\eqref{EQN::EXACT-SOLUTION-RATIONAL-POTENTIAL}. \label{FIG::RATIONAL}}
\end{figure}
    
\paragraph{Time-dependent~potential~($V(x, y, t) = 2\tanh^2(\sqrt{2} x) - 4(t - 1/2)^3 + 2\tanh^2(\sqrt{2}y) - 2$)} 
    
We now consider a manufactured problem with a time-dependent potential (see~\cite{Dehghan_Shokri_2007}). On the space--time domain~$\QT = (0, 1)^2 \times (0, 1)$ the exact solution is  
    \begin{equation}
    \label{EQN::EXACT-SOLUTION-TIME-DEPENDENT-POTENTIAL}
    \psi(x, y, t) = i \mathrm{e}^{i(t - 1/2)^4} \text{sech}(x) \text{sech}(y).
    \end{equation}
    In Figure \ref{FIG::TIME-DEPENDENT-POTENTIAL} we show the errors obtained for the sequence of meshes from the previous experiment, and optimal convergence is observed in both norms.
    
\begin{figure}[!ht]
    \centering
    \subfloat[DG error]{\includegraphics[width = .45\textwidth]{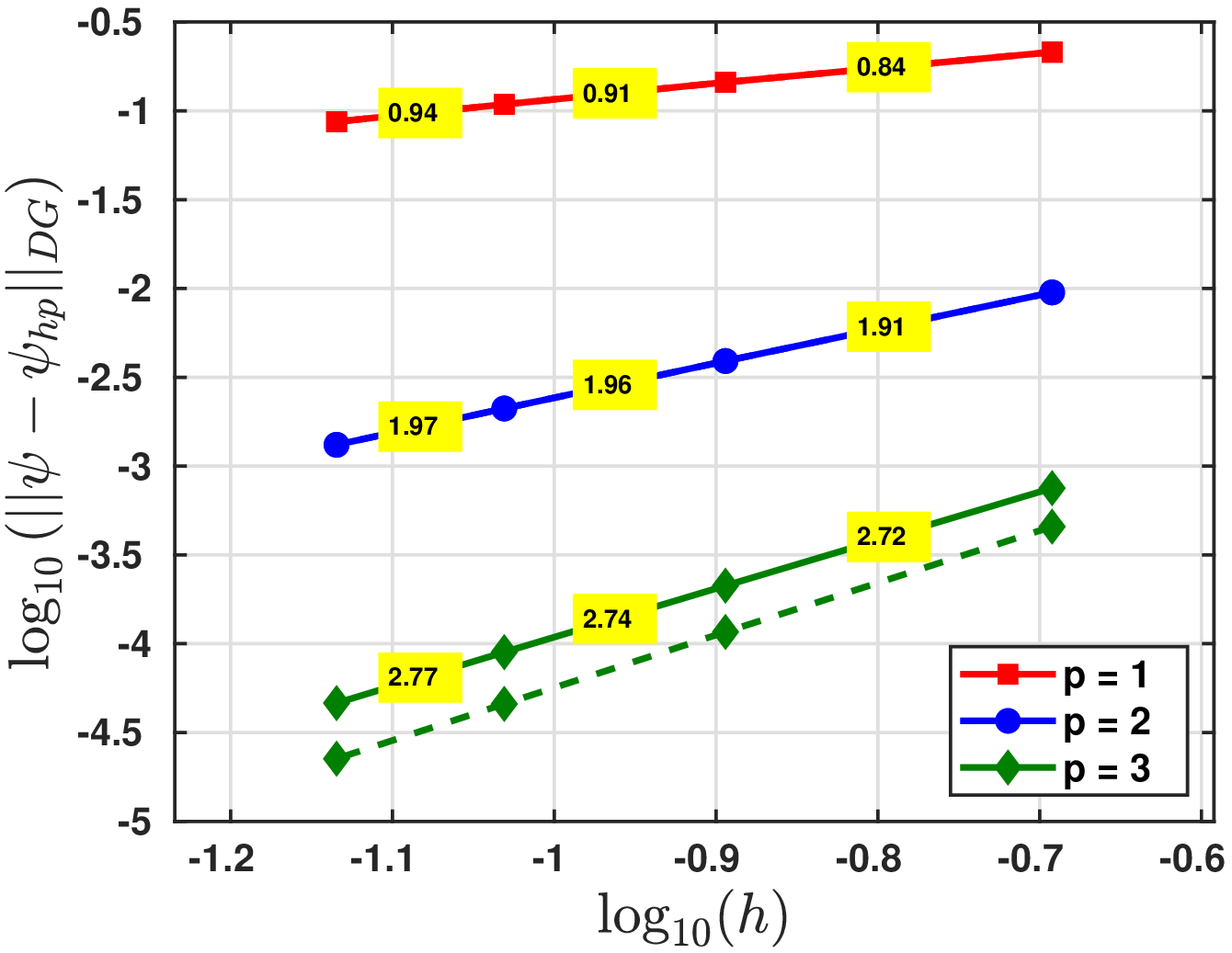}}
    \hspace{0.1in}
    \subfloat[$L^2$ error at $T = 1$]{\includegraphics[width = .45\textwidth]{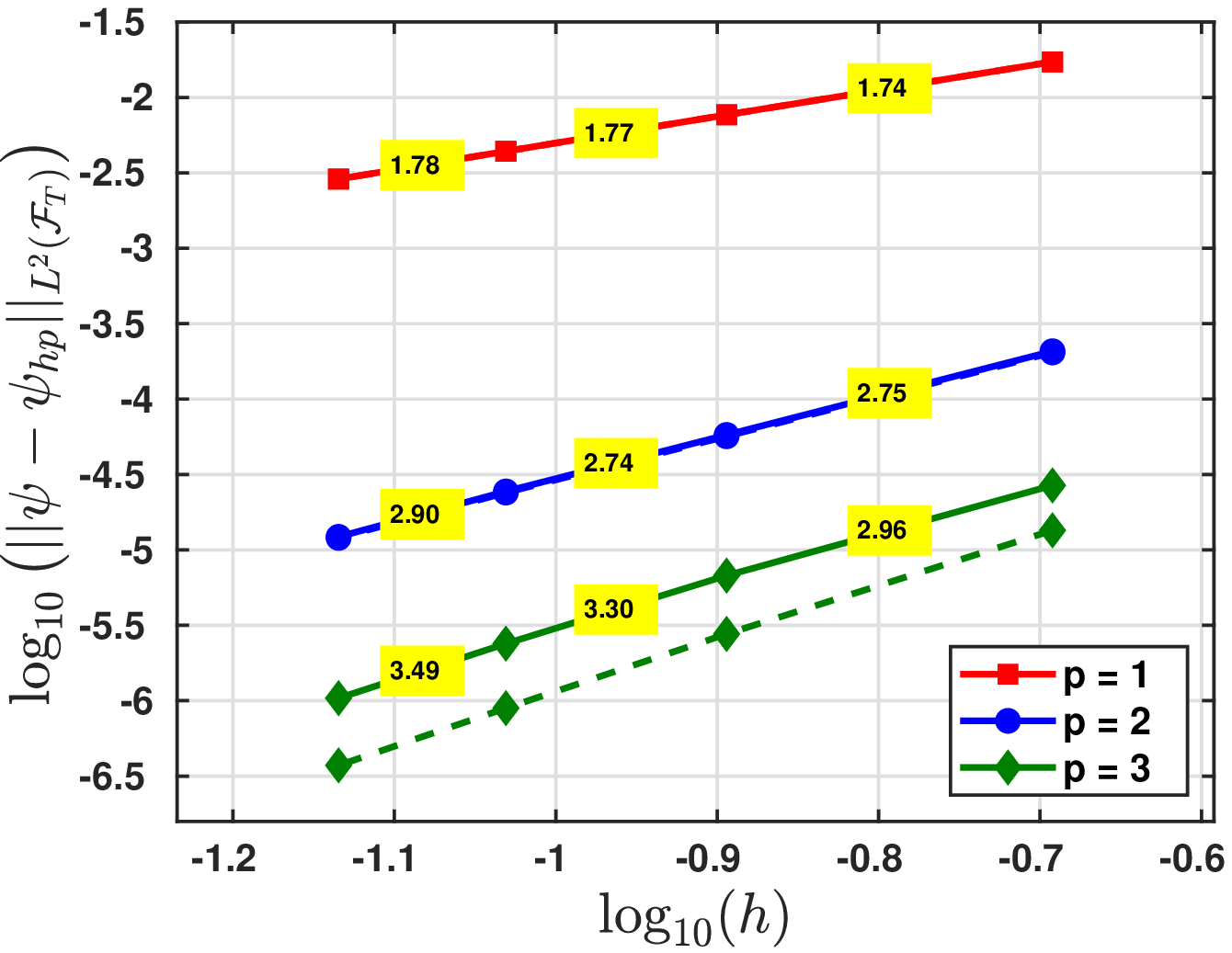}}
    \caption{$h$-convergence for the~$(2+1)$-dimensional problem with time dependent potential~$V(x, y, t) = 2\tanh^2\left(\sqrt{2} x\right) - 4\left(t - 1/2\right)^3 + 2\tanh^2\left(\sqrt{2}y\right) - 2$ and exact solution~\eqref{EQN::EXACT-SOLUTION-TIME-DEPENDENT-POTENTIAL}. \label{FIG::TIME-DEPENDENT-POTENTIAL}}
\end{figure}

\subsubsection{$p$-convergence}
In Figure \ref{FIG::P-CONVERGENCE-2D} we show the results obtained for the $p$-version of the method applied to the $(2+1)$-dimensional problems above, on the coarsest mesh. 
As expected, for the~$(2+1)$-dimensional case, the error of the quasi-Trefftz version decays root-exponentially as~$\ORDER{\mathrm{e}^{-b\sqrt{N_{dofs}}}}$.

\begin{figure}[!htp]
\centering
    \subfloat[Time independent potential~$V(x, y) = 1 - 1/x^2 - 1/y^2$]{
        \includegraphics[width = 0.45\textwidth]{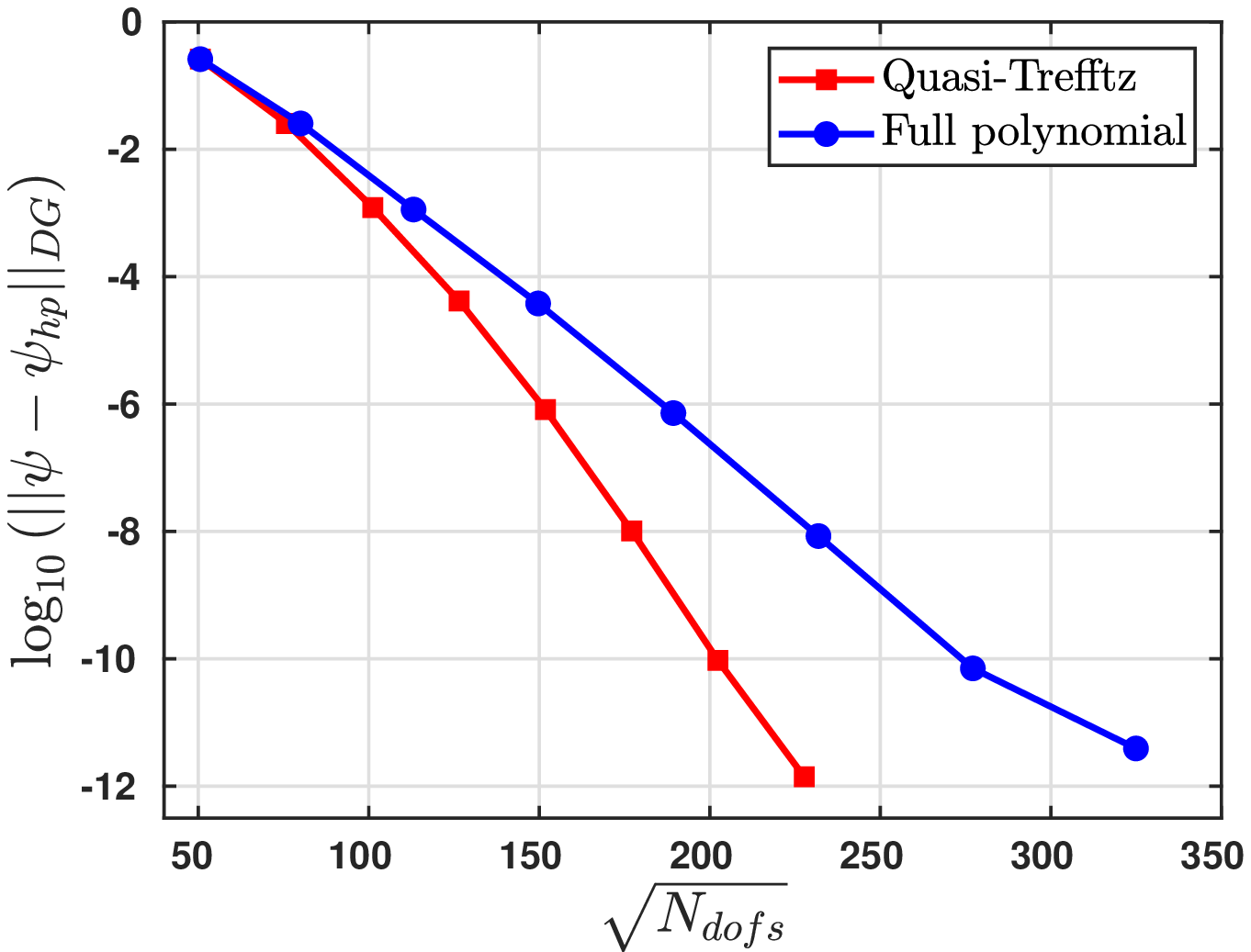}
    }
    \hspace{0.1in}
    \subfloat[Time dependent potential~$V(x, y, t) =$\\
    $2\tanh^2\left(\sqrt{2} x\right) - 4\left(t - 1/2\right)^3 + 2\tanh^2\left(\sqrt{2}y\right) - 2$]{
        \includegraphics[width = 0.45\textwidth]{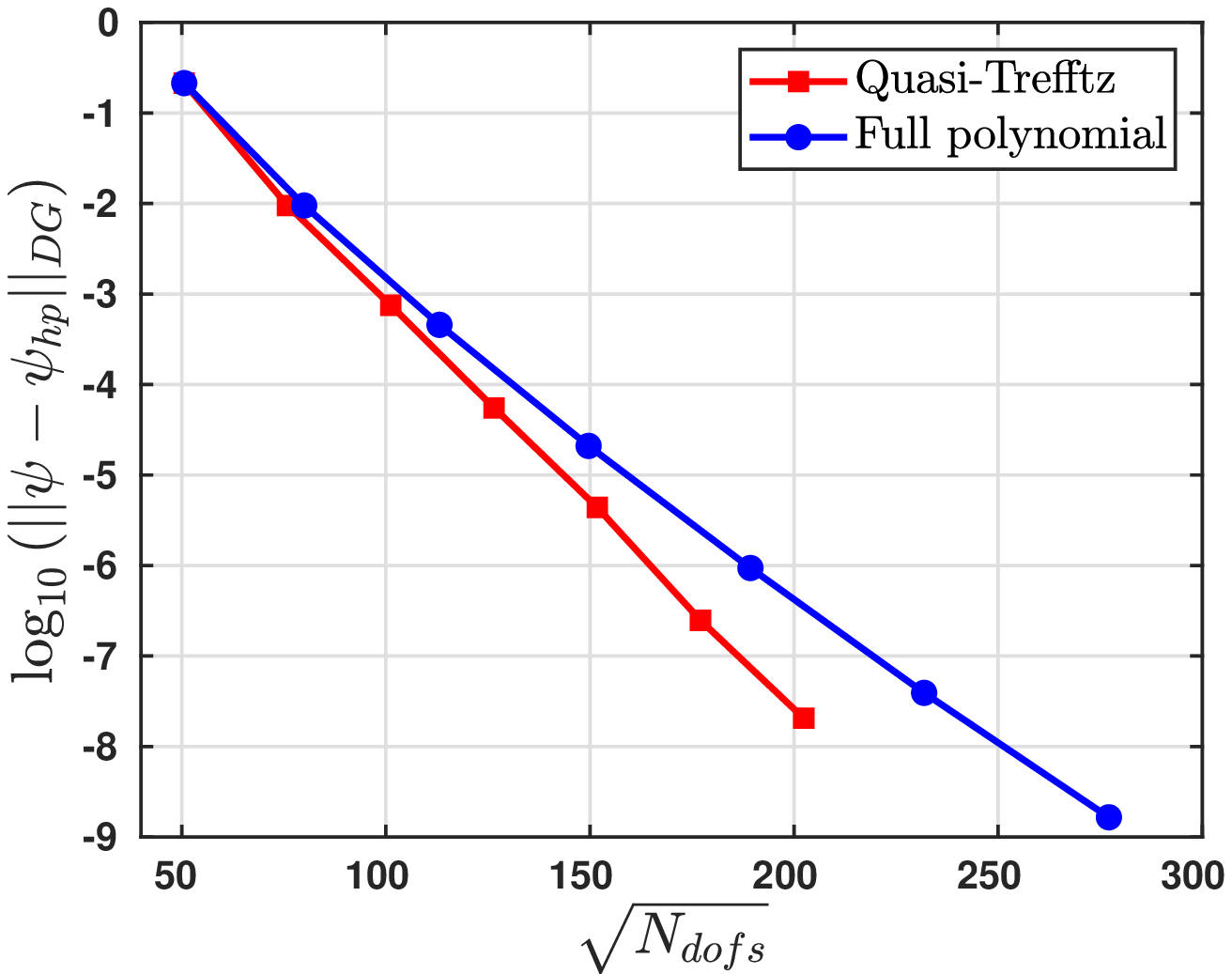}
    }
\caption{$p$-convergence for the $(2 + 1)$-dimensional problems. \label{FIG::P-CONVERGENCE-2D}}
\end{figure}

\section{Concluding remarks}
\label{SECT::CONCLUSIONS}

We have introduced a space--time ultra-weak discontinuous Galerkin discretization for the linear Schr\"o\-din\-ger equation with variable potential. 
The DG method is well-posed and quasi-optimal
in mesh-dependent norms
for any space dimension~$d\in \IN$, and for very general prismatic meshes and discrete spaces.
We proved optimal~$h$-convergence of order~$\ORDER{h^p}$, in such a mesh-dependent norm, for two choices of the discrete spaces: the space of piecewise polynomials, and a novel quasi-Trefftz polynomial space with much smaller dimension.
When the space--time mesh has a time-slab structure, the method allows for the decomposition of the resulting global linear system into a sequence of smaller problems on each time-slab: this is equivalent to an implicit time-stepping, possibly with local refinement in space--time.
We present several numerical experiments that validate the accuracy of the method for different potentials and high-order approximations.   


\begin{thebibliography}{32} 
\providecommand{\natexlab}[1]{#1}
\providecommand{\url}[1]{\texttt{#1}}
\expandafter\ifx\csname urlstyle\endcsname\relax
  \providecommand{\doi}[1]{doi: #1}\else
  \providecommand{\doi}{doi: \begingroup \urlstyle{rm}\Url}\fi

\bibitem{Banjai_Georgoulis_Lijoka_2017}
L.~Banjai, E.~Georgoulis, and O.~Lijoka.
\newblock A {T}refftz polynomial space-time discontinuous {G}alerkin method for
  the second order wave equation.
\newblock \emph{SIAM J.~Num.~Anal.}, 55\penalty0 (1):\penalty0 63--86, 2017.

\bibitem{Born_Oppenheimer_2000}
M.~Born and R.~Oppenheimer.
\newblock On the quantum theory of molecules.
\newblock In \emph{Quantum Chemistry: Classic Scientific Papers}, pages 1--24.
  World Scientific, 2000.

\bibitem{Brenner_Scott_2007}
S.~Brenner and R.~Scott.
\newblock \emph{The mathematical theory of finite element methods}, volume~15.
\newblock Springer Science \& Business Media, 2007.

\bibitem{Callahan_2010}
J.~Callahan.
\newblock \emph{Advanced calculus: a geometric view}.
\newblock Springer Science \& Business Media, 2010.

\bibitem{Crandall_Litt_1983}
R.~Crandall and B.~Litt.
\newblock Reassembly and time advance in reflectionless scattering.
\newblock \emph{Annals of Physics}, 146\penalty0 (2):\penalty0 458--469, 1983.

\bibitem{Dahl_Springborg_1988}
J.~Dahl and M.~Springborg.
\newblock The {M}orse oscillator in position space, momentum space, and phase
  space.
\newblock \emph{The Journal of chemical physics}, 88\penalty0 (7):\penalty0
  4535--4547, 1988.

\bibitem{Dehghan_Shokri_2007}
M.~Dehghan and A.~Shokri.
\newblock A numerical method for two-dimensional {S}chr{\"o}dinger equation
  using collocation and radial basis functions.
\newblock \emph{Comp. \& Math. with Appl.}, 54\penalty0 (1):\penalty0 136--146,
  2007.

\bibitem{Demkowicz_ETAL_2017}
L.~Demkowicz, J.~Gopalakrishnan, S.~Nagaraj, and P.~Sepulveda.
\newblock A spacetime {DPG} method for the {S}chrodinger equation.
\newblock \emph{SIAM J.~Num.~Anal.}, 55\penalty0 (4):\penalty0 1740--1759,
  2017.

\bibitem{Duran_1983}
R.~Dur{\'a}n.
\newblock On polynomial approximation in {S}obolev spaces.
\newblock \emph{SIAM J.~Num.~Anal.}, 20\penalty0 (5):\penalty0 985--988, 1983.

\bibitem{Egger_Kretzchmar_Schnepp_Weiland_2015}
H.~Egger, F.~Kretzschmar, S.~Schnepp, and T.~Weiland.
\newblock A space-time discontinuous {G}alerkin {T}refftz method for time
  dependent {M}axwell's equations.
\newblock \emph{SIAM J. Sci. Comput.}, 37\penalty0 (5):\penalty0 B689--B711,
  2015.

\bibitem{Gomez_Moiola_2022}
S.~G{\'o}mez and A.~Moiola.
\newblock A space-time {T}refftz discontinuous {G}alerkin method for the linear
  {S}chr{\"o}dinger equation.
\newblock \emph{SIAM J.~Num.~Anal.}, 60\penalty0 (2):\penalty0 688--714, 2022.

\bibitem{Gomez_Moiola_Perugia_Stocker_2023}
S.~G{\'o}mez, A.~Moiola, I.~Perugia, and P.~Stocker.
\newblock On polynomial {T}refftz spaces for the linear time-dependent Schr\"odinger equation.
\newblock \emph{Appl.~Math.~Lett.}, 146\penalty0 (C):\penalty0 108824, 2023.

\bibitem{Griffiths_1995}
D.~Griffiths.
\newblock \emph{Introduction to Quantum Mechanics}.
\newblock Prentice-Hall, New York, 1995.

\bibitem{Hain_Urban_2022}
S.~Hain and K.~Urban.
\newblock An ultra-weak space-time variational formulation for the
  {S}chr{\"o}dinger equation.
\newblock \emph{arXiv:2212.14398}, 2022.

\bibitem{Hiptmair_Moiola_Perugia_2013}
R.~Hiptmair, A.~Moiola, and I.~Perugia.
\newblock Error analysis of {T}refftz-discontinuous {G}alerkin methods for the
  time-harmonic {M}axwell equations.
\newblock \emph{Math. Comp.}, 82\penalty0 (281):\penalty0 247--268, 2013.

\bibitem{Hiptmair_Moiola_Perugia_2016}
R.~Hiptmair, A.~Moiola, and I.~Perugia.
\newblock A survey of {T}refftz methods for the {H}elmholtz equation.
\newblock In \emph{Building bridges: connections and challenges in modern
  approaches to numerical partial differential equations}, pages 237--279.
  Springer, 2016.

\bibitem{ImbertGerard_Desperes_2014}
L.-M. Imbert-G{\'e}rard and B.~Despr{\'e}s.
\newblock A generalized plane-wave numerical method for smooth nonconstant
  coefficients.
\newblock \emph{IMA J. Num. Anal.}, 34\penalty0 (3):\penalty0 1072--1103, 2014.

\bibitem{ImbertGerard_Monk_2017}
L.-M. Imbert-G\'{e}rard and P.~Monk.
\newblock Numerical simulation of wave propagation in inhomogeneous media using
  generalized plane waves.
\newblock \emph{ESAIM Math. Model. Numer. Anal.}, 51\penalty0 (4):\penalty0
  1387--1406, 2017.

\bibitem{ImbertGerard_Moiola_Stocker}
L.-M. Imbert-G\'erard, A.~Moiola, and P.~Stocker.
\newblock A space--time quasi-{T}refftz {DG} method for the wave equation with
  piecewise-smooth coefficients.
\newblock \emph{Math. Comp.}, 92\penalty0 (341):\penalty0 1211--1249, 2023.

\bibitem{Karakashian_Makridakis_1998}
O.~Karakashian and C.~Makridakis.
\newblock A space-time finite element method for the nonlinear
  {S}chr{\"o}dinger equation: the discontinuous {G}alerkin method.
\newblock \emph{Math. Comp.}, 67\penalty0 (222):\penalty0 479--499, 1998.

\bibitem{Karakashian_Makridakis_1999}
O.~Karakashian and C.~Makridakis.
\newblock A space-time finite element method for the nonlinear
  {S}chr{\"o}dinger equation: the continuous {G}alerkin method.
\newblock \emph{SIAM J.~Num.~Anal.}, 36\penalty0 (6):\penalty0 1779--1807,
  1999.

\bibitem{Keller_Papadakis_1977}
J.~Keller and J.~Papadakis.
\newblock \emph{Wave propagation and underwater acoustics}.
\newblock Springer, 1977.

\bibitem{Lehrenfeld_Stocker_2022}
C.~Lehrenfeld and P.~Stocker.
\newblock Embedded {T}refftz discontinuous {G}alerkin methods.
\newblock \emph{Int. J. Num. Methods Eng.}, 2023. \href{https://doi.org/10.1002/nme.7258}{doi:10.1002/nme.7258}

\bibitem{Levy_2000}
M.~Levy.
\newblock \emph{Parabolic equation methods for electromagnetic wave
  propagation}.
\newblock Number~45. IET, 2000.

\bibitem{Lifshitz_Landau_1965}
E.~Lifshitz and L.~Landau.
\newblock \emph{Quantum Mechanics; Non-relativistic Theory}.
\newblock Pergamon Press, 1965.

\bibitem{Lions_Magenes_1972}
 J.-L. Lions and E. Magenes. 
\newblock \emph{Non-homogeneous boundary value problems and applications.}, Vol.~I.
\newblock Die Grundlehren der mathematischen Wissenschaften, Band 181. Springer-Verlag, New York-Heidelberg, 1972.

\bibitem{Moiola_Perugia_2018}
A.~Moiola and I.~Perugia.
\newblock A space--time {T}refftz discontinuous {G}alerkin method for the
  acoustic wave equation in first-order formulation.
\newblock \emph{Numer. Math.}, 138\penalty0 (2):\penalty0 389--435, 2018.

\bibitem{Morse_1929}
P.~Morse.
\newblock Diatomic molecules according to the wave mechanics. {II}.
  {V}ibrational levels.
\newblock \emph{Physical review}, 34\penalty0 (1):\penalty0 57, 1929.

\bibitem{DLFM_2010}
F.~Olver, D.~W. Lozier, R.~F. Boisvert, and C.~Clark.
\newblock \emph{{NIST} handbook of mathematical functions}.
\newblock Cambridge university press, 2010.

\bibitem{Perugia_Schoeberl_Stocker_Wintersteiger_2020}
I.~Perugia, J.~Sch\"{o}berl, P.~Stocker, and C.~Wintersteiger.
\newblock Tent pitching and {T}refftz-{DG} method for the acoustic wave
  equation.
\newblock \emph{Comput. Math. Appl.}, 79\penalty0 (10):\penalty0 2987--3000,
  2020.

\bibitem{Qin05}
Q.-H. Qin.
\newblock Trefftz finite element method and its applications.
\newblock \emph{Appl. Mech. Rev.}, 58\penalty0 (5):\penalty0 316--337, 2005.

\bibitem{Steinbach:2015}
O.~Steinbach.
\newblock Space-time finite element methods for parabolic problems.
\newblock \emph{Comput. Methods Appl. Math.}, 15\penalty0 (4):\penalty0
  551--566, 2015.

\bibitem{Subacsi_2002}
M.~Suba{\c{s}}i.
\newblock On the finite-differences schemes for the numerical solution of two
  dimensional {S}chr{\"o}dinger equation.
\newblock \emph{Numer. Meth. for PDE: An International Journal}, 18\penalty0
  (6):\penalty0 752--758, 2002.

\end{thebibliography}

\end{document}